\providecommand{\tabularnewline}{\\}
\providecommand{\tabularnewline}{\\}
\newtheorem{theorem}{Theorem}
\newtheorem{lemma}{Lemma}
\newtheorem{definition}{Definition}
\newtheorem{corollary}{Corollary}
\newtheorem{proposition}{Proposition}
\newtheorem{remark}{Remark}
\newenvironment{proof}{\begin{trivlist} \item[]{\bf Proof. }}{\hspace*{\fill}$\rule{.3\baselineskip}{.35\baselineskip}$\end{trivlist}}
\newenvironment{proof1}{
    \noindent {\bf Proof }}{\hfill$\Box$}
\newcommand{\R}{\mathbb{R}}
\newcommand{\C}{\mathbb{C}}
\newcommand{\Z}{\mathbb{Z}}
\newcommand{\N}{\mathbb{N}}
\date{}
\begin{document}

\title{\textbf{Internal modes of discrete solitons near the anti-continuum
limit of the dNLS equation}}

\author{Dmitry Pelinovsky and Anton Sakovich\\
 {\small Department of Mathematics, McMaster University, Hamilton
ON, Canada, }}
\maketitle
\begin{abstract}
Discrete solitons of the discrete nonlinear Schrödinger (dNLS) equation
are compactly supported in the anti-continuum limit of the zero coupling
between lattice sites. Eigenvalues of the linearization of the dNLS
equation at the discrete soliton determine its spectral stability.
Small eigenvalues bifurcating from the zero eigenvalue near the anti-continuum
limit were characterized earlier for this model. Here we analyze the
resolvent operator and prove that it is bounded in the neighborhood
of the continuous spectrum if the discrete soliton is simply connected
in the anti-continuum limit. This result rules out existence of internal
modes (neutrally stable eigenvalues of the discrete spectrum) near
the anti-continuum limit. 
\end{abstract}

\section{Introduction}

The {\em discrete nonlinear Schrödinger (dNLS) equation} is a mathematical
model of many physical phenomena including the Bose--Einstein condensation
in optical lattices, propagation of optical pulses in coupled waveguide
arrays, and oscillations of molecules in DNAs \cite{Kevrekidis}.
{\em Discrete solitons} (stationary localized solutions) are used
to interpret the results of physical experiments and to characterize
global dynamics of the dNLS equation with decaying initial data.

Discrete solitons are compactly supported in the {\em anti-continuum
limit} of the zero coupling between lattice sites. Different families
of discrete solitons can be uniquely characterized near the anti-continuum
limit from a number of limiting configurations \cite{Alfimov}. This
is the main reason why the anti-continuum limit has been studied in
many details after the pioneer works of Eilbeck {\em at al.} \cite{Eilbeck}
and Aubry \& Abramovici \cite{Aubry}. The existence of discrete solitons
(also called {\em discrete breathers} in the context of the discrete
Klein--Gordon equation) was rigorously justified with {\em implicit
function theorem arguments} by MacKay \& Aubry \cite{MA94}. Their
work on existence of discrete solitons led to further progress in
understanding their stability properties as well as nonlinear dynamics
of nonlinear lattices \cite{Aubry1,Aubry2,Bambusi}.

{\em Spectral stability} of discrete solitons is determined by
eigenvalues of the discrete spectrum of an associated linearized operator
because its continuous spectrum is neutrally stable. Unstable eigenvalues
can be fully characterized near the anti-continuum limit because they
bifurcate from the zero eigenvalue of finite multiplicity and the
zero eigenvalue is isolated from the continuous spectrum. Characterization
of unstable eigenvalues for each family of discrete solitons bifurcating
from a compact limiting solution was obtained by Pelinovsky {\em
et al.} \cite{PKF1} with an application of {\em Lyapunov--Schmidt
reduction technique}. Beside the unstable eigenvalues, the same technique
was used to characterize a number of neutrally stable eigenvalues
of negative energy (also called eigenvalues of {\em negative Krein
signature}) which bifurcate from the same zero eigenvalue. These
isolated eigenvalues of negative energy may become unstable far from
the anti-continuum limit because of collisions with eigenvalues of
positive energy (also called {\em internal modes}) or with the
continuous spectrum of the linearized operator. Isolated eigenvalues
of negative energy may also induce nonlinear instability if their
multiples belong to the continuous spectrum \cite{Cuccagna}.

In the same anti-continuum limit, another bifurcation occurs beyond
the applicability of the Lyapunov--Schmidt reduction technique: a
pair of {\em semi-simple} nonzero eigenvalues of {\em infinite
multiplicity} transforms into a pair of continuous spectral bands
of small width. This transformation may produce additional eigenvalues
of the discrete spectrum similar to what happens for the discrete
kinks (which are non-compact solutions of the nonlinear lattice in
the anti-continuum limit) \cite{PK08}. No complex unstable eigenvalues
may bifurcate from the semi-simple nonzero eigenvalues of infinite
multiplicity because such eigenvalues are excluded by the count of
unstable eigenvalues in \cite{PKF1}. Nevertheless, internal modes
may in general be expected outside the continuous spectrum.

It is important to know the details on existence of internal modes
because of several reasons. First, these internal modes may collide
with eigenvalues of negative energy to produce the Hamilton-Hopf instability
bifurcations \cite{PKF1}. Second, analysis of asymptotic stability
of discrete solitons depends on the number and location of the internal
modes \cite{CT,KPS}. Third, the presence of internal modes may result
in long-term quasi-periodic oscillations of discrete solitons \cite{Cuccagna-breather}.

In this paper, we address bifurcations of internal modes from semi-simple
nonzero eigenvalues of infinite multiplicity. We continue the resolvent
operator across the continuous spectrum and prove that it is bounded
near the end points of the continuous spectrum if the discrete soliton
is {\em simply connected} in the anti-continuum limit, see Definition
\ref{definition-simply-connected}. As a result, no internal modes
exist in the neighborhood of the continuous spectrum. These results
hold for {\em any} discrete soliton of the dNLS equation with {\em
any} power nonlinearity near the anti-continuum limit.

There are multiple numerical evidences that no internal modes exist
near the anti-continuum limit for the {\em fundamental} discrete
soliton, which is supported at a single lattice site in the zero coupling
limit. In particular, this fact is suggested by Figure 1 in Johansson
\& Aubry \cite{JA00} and by Figure 2.5 in Kevrekidis \cite{Kevrekidis}.
Our article presents the first analytical proof of this phenomenon.

The paper is organized as follows. Section 2 reviews results on existence
and stability of discrete solitons near the anti-continuum limit.
Section 3 is devoted to analysis of the resolvent operator with the
limiting compact potentials. Section 4 develops perturbative arguments
for the full resolvent operator. Section 5 considers a case study
for the resolvent operator associated with a non-simply-connected
$2$-site discrete soliton. Appendix A is devoted to the cubic dNLS
equation, for which perturbation arguments are more delicate.

\textbf{Notations.} We denote the bi-infinite sequence $\{u_{n}\}_{n\in\Z}$
by ${\bf u}$. The $l^{p}$ space for sequences is denoted by $l^{p}(\Z)$
and is equipped with the norm \[
\|{\bf u}\|_{l^{p}}:=\left(\sum_{n\in\Z}|u_{n}|^{p}\right)^{1/p},\quad p\geq1.\]
 The algebraically weighted space $l_{s}^{p}(\Z)$ with $s\in\R$
is the $l^{p}(\Z)$ space for the sequence \[
\{(1+n^{2})^{s/2}u_{n}\}_{n\in\Z}.\]
 A disk of radius $\delta>0$ centered at the point $\lambda_{0}\in\C$
on the complex plane is denoted by $B_{\delta}(\lambda_{0})\subset\C$.

\section{Review of results on discrete solitons}

Consider the dNLS equation in the form \begin{equation}
i\dot{u}_{n}+\epsilon(u_{n+1}-2u_{n}+u_{n-1})+|u_{n}|^{2p}u_{n}=0,\quad n\in\Z,\label{dNLS}\end{equation}
 where the dot denotes differentiation in $t\in\R$, $\{u_{n}(t)\}_{n\in\Z}:\R^{\Z}\to\C$
is the set of amplitude functions, and parameters $\epsilon\in\R$
and $p\in\N$ define the coupling constant and the power of nonlinearity.
The anti-continuum limit corresponds to $\epsilon=0$, in which case
the dNLS equation (\ref{dNLS}) becomes an infinite system of uncoupled
differential equations.

Discrete solitons are defined in the form $u_{n}(t)=\phi_{n}e^{it}$,
where the frequency is normalized thanks to the scaling symmetry of
the power nonlinearity. By the standard arguments \cite{PP08} based
on the conserved quantity \begin{equation}
\epsilon\neq0:\quad\bar{\phi}_{n}\phi_{n+1}-\phi_{n}\bar{\phi}_{n+1}={\rm const}\;\;{\rm in}\;\; n\in\Z,\end{equation}
 it is known that if $\{\phi_{n}\}_{n\in\Z}$ decays to zero as $|n|\to\infty$,
then $\{\phi_{n}\}_{n\in\Z}$ is real-valued module to multiplication
by $e^{i\theta}$ for any $\theta\in\R$. The real-valued stationary
solutions are found from the second-order difference equation \begin{equation}
(1-\phi_{n}^{2p})\phi_{n}=\epsilon(\phi_{n+1}-2\phi_{n}+\phi_{n-1}),\quad n\in\Z.\label{stat-dNLS}\end{equation}
 The algebraic system is uncoupled if $\epsilon=0$.

Let us consider solutions of the difference equation (\ref{stat-dNLS})
for $\mbox{\boldmath\ensuremath{\phi}}\in l^{2}(\Z)$. If $\epsilon=0$
and $p\in\N$, the limiting configuration of the discrete soliton
is given by the compact solution \begin{equation}
\epsilon=0:\quad\mbox{\boldmath\ensuremath{\phi}}^{(0)}=\sum_{n\in U_{+}}\mbox{\boldmath\ensuremath{\delta}}_{n}-\sum_{n\in U_{-}}\mbox{\boldmath\ensuremath{\delta}}_{n},\label{limit-solution}\end{equation}
 where $U_{\pm}$ are compact subset of $\Z$ such that $U_{+}\cap U_{-}=\varnothing$
and $\mbox{\boldmath\ensuremath{\delta}}_{n}$ is the standard unit
vector in $l^{2}(\Z)$ expressed via the Kronecker symbol by \[
(\mbox{\boldmath\ensuremath{\delta}}_{n})_{m}=\delta_{n,m},\quad m\in\Z.\]
 We will denote the number of sites in $U_{\pm}$ by $|U_{\pm}|$.
The following proposition gives a unique analytic continuation of
the compact limiting solution (\ref{limit-solution}) to a particular
family of discrete solitons (see \cite{MA94,PP08,PKF1} for the proof).

\begin{proposition} Fix $U_{+},U_{-}\subset\Z$ such that $U_{+}\cap U_{-}=\varnothing$
and $|U_{+}|+|U_{-}|<\infty$. There exists $\epsilon_{0}>0$ such
that the stationary dNLS equation (\ref{stat-dNLS}) with $\epsilon\in(-\epsilon_{0},\epsilon_{0})$
admits a unique solution $\mbox{\boldmath\ensuremath{\phi}}\in l^{2}(\Z)$
near $\mbox{\boldmath\ensuremath{\phi}}^{(0)}\in l^{2}(\Z)$. The
map $(-\epsilon_{0},\epsilon_{0})\ni\epsilon\mapsto\mbox{\boldmath\ensuremath{\phi}}\in l^{2}(\Z)$
is analytic and \begin{equation}
\exists C>0:\quad\|\mbox{\boldmath\ensuremath{\phi}}-\mbox{\boldmath\ensuremath{\phi}}^{(0)}\|_{l^{2}}\leq C|\epsilon|.\label{soliton-power-series}\end{equation}
 Moreover, there are $\kappa>0$ and $C>0$ such that for any $\epsilon\in(-\epsilon_{0},\epsilon_{0})$
\begin{equation}
|\phi_{n}|\leq Ce^{-\kappa|n|},\quad n\in\Z.\label{soliton-decay}\end{equation}
 \label{proposition-existence} \end{proposition}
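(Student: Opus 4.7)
The plan is to realize the stationary equation~(\ref{stat-dNLS}) as an abstract operator equation and apply the analytic implicit function theorem. Define $F:\R \times l^2(\Z) \to l^2(\Z)$ by
\[
F(\epsilon, \boldsymbol{\phi})_n := (1 - \phi_n^{2p})\phi_n - \epsilon(\phi_{n+1} - 2\phi_n + \phi_{n-1}), \quad n \in \Z.
\]
Because of the embedding $l^2(\Z) \hookrightarrow l^\infty(\Z)$, the homogeneous polynomial $\boldsymbol{\phi} \mapsto \{\phi_n^{2p+1}\}$ is a bounded, and hence analytic, map from $l^2(\Z)$ into itself, while the discrete Laplacian is bounded and linear; so $F$ is jointly analytic in $(\epsilon, \boldsymbol{\phi})$. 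By (\ref{limit-solution}) each entry $\phi_n^{(0)} \in \{-1, 0, 1\}$, which makes $(1 - (\phi_n^{(0)})^{2p})\phi_n^{(0)} = 0$ for every $n$, so $F(0, \boldsymbol{\phi}^{(0)}) = 0$.

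Next I would compute the Fr\'echet derivative $D_{\boldsymbol{\phi}} F(0, \boldsymbol{\phi}^{(0)})$: it is the diagonal multiplication operator whose $n$-th entry is $1 - (2p+1)(\phi_n^{(0)})^{2p}$, equal to $-2p$ on $U_+ \cup U_-$ and to $1$ on the complement. These entries are uniformly bounded away from zero, so the derivative is an isomorphism of $l^2(\Z)$. The analytic implicit function theorem then produces $\epsilon_0 > 0$ and a unique analytic branch $\epsilon \mapsto \boldsymbol{\phi}(\epsilon) \in l^2(\Z)$ near $\boldsymbol{\phi}^{(0)}$; the linear bound $\|\boldsymbol{\phi}(\epsilon) - \boldsymbol{\phi}^{(0)}\|_{l^2} \leq C|\epsilon|$ follows immediately from the expansion $\boldsymbol{\phi}(\epsilon) = \boldsymbol{\phi}^{(0)} + O(\epsilon)$ supplied by analyticity.

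To obtain the uniform exponential decay (\ref{soliton-decay}), I would repeat the same construction in the exponentially weighted space $l^2_\kappa(\Z) := \{ \boldsymbol{u} : \sum_{n\in\Z} e^{2\kappa|n|}|u_n|^2 < \infty \}$ for a fixed small $\kappa > 0$. The shift operators $u_n \mapsto u_{n \pm 1}$ have norm at most $e^\kappa$ on $l^2_\kappa$, so the Laplacian remains bounded; the embedding $l^2_\kappa \hookrightarrow l^\infty$ still holds, so the nonlinearity is analytic; $\boldsymbol{\phi}^{(0)}$ lies in $l^2_\kappa$ because it is compactly supported; and the diagonal derivative $D_{\boldsymbol{\phi}} F(0, \boldsymbol{\phi}^{(0)})$ commutes with the weight, so it is literally the same isomorphism. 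Applying the implicit function theorem in $l^2_\kappa$ produces a solution $\boldsymbol{\phi}(\epsilon) \in l^2_\kappa$ on a (possibly smaller) interval around $0$, and uniqueness in the larger space $l^2(\Z)$ forces it to coincide with the previously constructed branch. The uniform bound $\|\boldsymbol{\phi}(\epsilon)\|_{l^2_\kappa} \leq C$ then yields $|\phi_n(\epsilon)| \leq C e^{-\kappa|n|}$ pointwise. The one point requiring genuine care is uniformity of $\epsilon_0$: one must check that the Lipschitz constant of the nonlinearity on a small $l^2_\kappa$-ball and the norm of the inverse linearization can be bounded independently of $\kappa$ in a fixed small interval, so that $\epsilon_0$ does not shrink as the weight is turned on. This is routine because the inverse of the diagonal linearization is identical in all the spaces involved, and the polynomial estimate $\|\boldsymbol{\phi}^{2p+1}\|_{l^2_\kappa} \leq \|\boldsymbol{\phi}\|_{l^\infty}^{2p} \|\boldsymbol{\phi}\|_{l^2_\kappa}$ transfers verbatim to the weighted setting.
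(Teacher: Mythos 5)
Your proposal is correct and follows essentially the same route as the proof the paper defers to (the implicit-function-theorem argument of MacKay--Aubry and its refinements in \cite{MA94,PP08,PKF1}): analytic IFT in $l^{2}(\Z)$ around the compactly supported limit, using that the linearization at $\epsilon=0$ is a diagonal operator with entries $-2p$ and $1$, followed by a rerun in an exponentially weighted space to get the uniform decay. The only minor remark is that your worry about $\epsilon_{0}$ shrinking ``as the weight is turned on'' is moot once a single small $\kappa>0$ is fixed, since the proposition permits taking $\epsilon_{0}$ to be the smaller of the two IFT radii.
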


\begin{remark} Thanks to the exponential decay (\ref{soliton-decay}),
the solution $\mbox{\boldmath\ensuremath{\phi}}\in l^{2}(\Z)$ of
Proposition \ref{proposition-existence} belongs to $\mbox{\boldmath\ensuremath{\phi}}\in l_{s}^{2}(\Z)$
for any $s\geq0$. \label{remark-existence} \end{remark}

By Proposition \ref{proposition-existence}, the solution $\mbox{\boldmath\ensuremath{\phi}}$
for a given $\mbox{\boldmath\ensuremath{\phi}}^{(0)}$ can be expanded
in the power series \begin{equation}
\mbox{\boldmath\ensuremath{\phi}}=\mbox{\boldmath\ensuremath{\phi}}^{(0)}+\sum_{k=1}^{\infty}\epsilon^{k}\mbox{\boldmath\ensuremath{\phi}}^{(k)},\quad\epsilon\in(-\epsilon_{0},\epsilon_{0}),\label{power-series}\end{equation}
 where correction terms $\{\mbox{\boldmath\ensuremath{\phi}}^{(k)}\}_{k\in\N}$
are uniquely defined by a recursion formula.

Spectral stability of the discrete solitons is determined from analysis
of the spectral problem \begin{equation}
L_{+}{\bf u}=-\lambda{\bf w},\quad L_{-}{\bf w}=\lambda{\bf u},\label{spectral-stability}\end{equation}
 where $\lambda\in\C$ is the spectral parameter, $({\bf u},{\bf w})\in l^{2}(\Z)\times l^{2}(\Z)$
is an eigenvector, and $L_{\pm}$ are discrete Schrödinger operators
given by \begin{eqnarray}
\left\{ \begin{array}{l}
(L_{+}{\bf u})_{n}=-\epsilon(u_{n+1}-2u_{n}+u_{n-1})+(1-(2p+1)\phi_{n}^{2p})u_{n},\\
(L_{-}{\bf w})_{n}=-\epsilon(w_{n+1}-2w_{n}+w_{n-1})+(1-\phi_{n}^{2p})w_{n},\end{array}\right.\quad n\in\Z.\end{eqnarray}
 We recall basic definitions and results from the stability analysis
of the spectral problem (\ref{spectral-stability}).

\begin{definition} The eigenvalues of the spectral problem (\ref{spectral-stability})
with ${\rm Re}(\lambda)>0$ (resp. ${\rm Re}(\lambda)=0$) are called
unstable (resp. neutrally stable). If $\lambda\in i\R$ is a simple
isolated eigenvalue, then the eigenvalue $\lambda$ is said to have
a positive energy if $\langle L_{+}{\bf u},{\bf u}\rangle_{l^{2}}>0$
and a negative energy if $\langle L_{+}{\bf u},{\bf u}\rangle_{l^{2}}<0$.
\end{definition}

\begin{remark} If $\lambda\in i\R$ is an isolated eigenvalue and
$\langle L_{+}{\bf u},{\bf u}\rangle_{l^{2}}=0$, then $\lambda$
is not a simple eigenvalue. In this case, the concept of eigenvalues
of positive and negative energies is defined by the diagonalization
of the quadratic form $\langle L_{+}{\bf u},{\bf u}\rangle_{l^{2}}$,
where ${\bf u}$ belongs to the subspace of $l^{2}(\Z)$ associated
to the eigenvalue $\lambda$ of the spectral problem (\ref{spectral-stability})
and invariant under the action of the corresponding linearized operator
(see \cite{ChP} for the relevant theory). \end{remark}

In the anti-continuum limit $\epsilon=0$, the spectrum of $L_{+}$
(resp. $L_{-}$) includes a semi-simple eigenvalue $-2p$ (resp. $0$)
of multiplicity $N=|U_{+}|+|U_{-}|<\infty$ and a semi-simple eigenvalue
$1$ of multiplicity $|\Z\backslash\{U_{+}\cup U_{-}\}|=\infty$.
The spectral problem (\ref{spectral-stability}) has a pair of eigenvalues
$\lambda=\pm i$ of infinite multiplicity and the eigenvalue $\lambda=0$
of geometric multiplicity $N$ and algebraic multiplicity $2N$. The
following proposition describes the splitting of the zero eigenvalue
near the anti-continuum limit for $\epsilon>0$ (see \cite{PKF1}
for the proof).

\begin{proposition} \label{proposition-stability} Fix $U_{+},U_{-}\subset\Z$
such that $U_{+}\cap U_{-}=\varnothing$ and $N:=|U_{+}|+|U_{-}|<\infty$.
Fix $\epsilon>0$ sufficiently small and denote the number of sign
differences of $\{\phi_{n}^{(0)}\}_{n\in U_{+}\cup U_{-}}$ by $n_{0}$. 
\begin{itemize}
\item There are exactly $n_{0}$ negative and $N-1-n_{0}$ small positive
eigenvalues of $L_{-}$ counting multiplicities and a simple zero
eigenvalue.
\item There are exactly $n_{0}$ pairs of small eigenvalues $\lambda\in i\R$
and $N-1-n_{0}$ pairs of small eigenvalues $\lambda\in\R$ of the
spectral problem (\ref{spectral-stability}) counting multiplicities
and a double zero eigenvalue. 
\end{itemize}
\end{proposition}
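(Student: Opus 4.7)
The plan is to deduce both items by a Lyapunov--Schmidt reduction around the anti-continuum limit, reducing the bifurcation of the zero eigenvalue of $L_-$ and of the spectral problem (\ref{spectral-stability}) to an $N\times N$ matrix problem whose leading-order term is computable from the expansion (\ref{power-series}). The essential structural facts at $\epsilon=0$ are that $L_-$ is diagonal with kernel $X_0:=\mathrm{span}\{\mbox{\boldmath\ensuremath{\delta}}_n:n\in U_+\cup U_-\}$ and $L_-|_{X_0^\perp}=I$, while $L_+|_{X_0}=-2p\,I$ and $L_+|_{X_0^\perp}=I$; hence both $L_-|_{X_0^\perp}$ and $L_+$ remain invertible with bounded inverses uniformly in small $\epsilon$.

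For $L_-$, let $P$ be the orthogonal projection of $l^2(\Z)$ onto $X_0$ and split $\mathbf{w}=\mathbf{w}_0+\mathbf{w}_\perp$. The eigenvalue equation $L_-\mathbf{w}=\mu\mathbf{w}$ decouples into its $X_0$- and $X_0^\perp$-projections; the implicit function theorem solves the second as $\mathbf{w}_\perp=\epsilon A(\epsilon,\mu)\mathbf{w}_0$ for $|\epsilon|,|\mu|$ small, giving a reduced $N\times N$ eigenvalue problem $\mathcal{M}_-(\epsilon,\mu)\mathbf{w}_0=\mu\mathbf{w}_0$ with $\mathcal{M}_-(\epsilon,0)=\epsilon M_1+O(\epsilon^2)$. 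Inserting the series (\ref{power-series}) into $L_-$ and using $\phi_n^{(1)}=-(2p)^{-1}(\Delta\mbox{\boldmath\ensuremath{\phi}}^{(0)})_n$ on $U_+\cup U_-$ (obtained by differentiating (\ref{stat-dNLS}) in $\epsilon$), a short calculation rewrites the quadratic form of $M_1$ on $X_0$ as
\begin{equation*}
\langle M_1\mathbf{c},\mathbf{c}\rangle=\sum_{n:\,n,n+1\in U_+\cup U_-} \sigma_n\sigma_{n+1}\bigl(c_n-\sigma_n\sigma_{n+1}c_{n+1}\bigr)^2,
\end{equation*}
where $\sigma_n:=\phi_n^{(0)}\in\{\pm 1\}$, so matching-sign adjacent pairs contribute positive squares and opposite-sign adjacent pairs contribute negative squares.

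The gauge invariance $L_-\mbox{\boldmath\ensuremath{\phi}}=0$, which holds identically in $\epsilon$, forces $\vec\sigma:=(\sigma_n)_{n\in U_+\cup U_-}$ into $\ker M_1$ and thus accounts for the simple zero eigenvalue. The main obstacle is turning the displayed sum-of-squares into a sharp inertia count. For nearest-neighbor connected $U_+\cup U_-$ the $N-1$ linear functionals $c_n-\sigma_n\sigma_{n+1}c_{n+1}$ are linearly independent with joint kernel $\mathrm{span}(\vec\sigma)$, yielding directly $n_0$ negative and $N-1-n_0$ positive eigenvalues; for disconnected configurations the Lyapunov--Schmidt expansion must be carried to the order $\epsilon^k$ at which disjoint components first couple ($k$ being the minimal site separation), with the exponential decay of Proposition \ref{proposition-existence} justifying the higher-order perturbation scheme. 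Holomorphic perturbation theory for symmetric matrices then transfers the inertia of $M_1$ to $\mathcal{M}_-(\epsilon,\mu)$ for $0<\epsilon\ll 1$, proving the first bullet.

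For the spectral problem I look for eigenvalues $\lambda=O(\epsilon^{1/2})$. Since $L_+$ is invertible, (\ref{spectral-stability}) reduces to $L_-\mathbf{w}=-\lambda^2 L_+^{-1}\mathbf{w}$, and the same Lyapunov--Schmidt scheme combined with $L_+^{-1}|_{X_0}=-(2p)^{-1}I+O(\epsilon)$ yields $\epsilon M_1\mathbf{w}_0=(2p)^{-1}\lambda^2\mathbf{w}_0+o(\epsilon)$, so each small eigenvalue $\mu=\epsilon\mu_1+o(\epsilon)$ of $L_-$ produces a pair $\lambda^2=2p\mu+o(\epsilon)$. Thus the $n_0$ negative eigenvalues of $L_-$ give $n_0$ pairs $\lambda\in i\R$ and the $N-1-n_0$ positive ones give $N-1-n_0$ pairs $\lambda\in\R$, while the gauge kernel supplies the geometric zero eigenvector $(0,\mbox{\boldmath\ensuremath{\phi}})$ of (\ref{spectral-stability}) and the frequency-scaling symmetry supplies the generalized eigenvector $(-L_+^{-1}\mbox{\boldmath\ensuremath{\phi}},0)$, closing the Jordan block to give the double zero eigenvalue.
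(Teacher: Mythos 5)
Your proposal is correct in outline and follows essentially the same route as the proof the paper points to (the Lyapunov--Schmidt reduction of \cite{PKF1}): reduce to an $N\times N$ matrix on the span of $\{\mbox{\boldmath\ensuremath{\delta}}_n\}_{n\in U_+\cup U_-}$, identify the leading-order reduced matrix $M_1$ with the signed sum of squares $\sum\sigma_n\sigma_{n+1}(c_n-\sigma_n\sigma_{n+1}c_{n+1})^2$, read off the inertia, and transfer it to the spectral problem via $L_-\mathbf{w}=-\lambda^2L_+^{-1}\mathbf{w}$ with $L_+^{-1}|_{X_0}=-(2p)^{-1}I+{\cal O}(\epsilon)$; your computation of $\phi^{(1)}_n$ and of the quadratic form of $M_1$ checks out.

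Two steps are asserted rather than proved and would need to be filled in for the proposition as stated. First, the proposition does not assume $U_+\cup U_-$ is simply connected, and for separated excited sites the order-$\epsilon$ matrix $M_1$ is degenerate in the gap directions; the higher-order reduction (couplings of size $\epsilon^{m_j}$ between sites at distance $m_j$, with signs $\sigma_n\sigma_{n'}$) must actually be carried out to recover the full inertia count with $n_0$ defined via consecutive elements of $U_+\cup U_-$ -- this is the bulk of the work in \cite{PKF1} and is only gestured at in your last paragraph. Second, the passage from ``eigenvalue $\mu_1$ of the symmetric matrix $M_1$'' to ``$\lambda^2=2p\epsilon\mu_1+o(\epsilon)$ with $\lambda^2$ real'' needs the Hamiltonian quadruple symmetry $\{\lambda,-\lambda,\bar\lambda,-\bar\lambda\}$ (or the count (\ref{count-eigenvalues})) to exclude the perturbed $\lambda^2$ drifting off the real axis, and the claim that the zero eigenvalue is exactly double requires $\langle L_+^{-1}\mbox{\boldmath\ensuremath{\phi}},\mbox{\boldmath\ensuremath{\phi}}\rangle_{l^2}\neq0$, which is verified in the text following the proposition but not in your argument.
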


Proposition \ref{proposition-stability} completes the characterization
of unstable eigenvalues and neutrally stable eigenvalues of negative
energy from negative eigenvalues of $L_{+}$ and $L_{-}$. In particular,
we know from \cite{ChP} that if ${\rm Ker}(L_{+})=\{0\}$, ${\rm Ker}(L_{-})={\rm span}\{\mbox{\boldmath\ensuremath{\phi}}\}$,
and $\langle L_{+}^{-1}\mbox{\boldmath\ensuremath{\phi}},\mbox{\boldmath\ensuremath{\phi}}\rangle_{l^{2}}\neq0$,
then \begin{equation}
\left\{ \begin{array}{lcl}
n(L_{+})-p_{0} & = & N_{r}^{-}+N_{i}^{-}+N_{c},\\
n(L_{-}) & = & N_{r}^{+}+N_{i}^{-}+N_{c},\end{array}\right.\label{count-eigenvalues}\end{equation}
 where $n(L_{\pm})$ denotes the number of negative eigenvalues of
$L_{\pm}$, $N_{i}^{-}$ denotes the number of eigenvalues $\lambda\in i\R$
with negative energy, $N_{c}$ denotes the number of eigenvalues with
${\rm Re}(\lambda)>0$ and ${\rm Im}(\lambda)>0$, $N_{r}^{\pm}$
denotes the number of eigenvalues $\lambda\in\R$ with $\langle L_{+}{\bf u},{\bf u}\rangle_{l^{2}}\gtrless0$,
and \[
p_{0}=\left\{ \begin{array}{l}
1\quad\mbox{{\rm if}}\quad\langle L_{+}^{-1}\mbox{\boldmath\ensuremath{\phi}},\mbox{\boldmath\ensuremath{\phi}}\rangle_{l^{2}}<0,\\
0\quad\mbox{{\rm if}}\quad\langle L_{+}^{-1}\mbox{\boldmath\ensuremath{\phi}},\mbox{\boldmath\ensuremath{\phi}}\rangle_{l^{2}}>0.\end{array}\right.\]

To compute $p_{0}$, we extend the family of discrete solitons by
parameter $\omega$ as solutions of \begin{equation}
(\omega-\phi_{n}^{2p})\phi_{n}=\epsilon(\phi_{n+1}-2\phi_{n}+\phi_{n-1}),\quad n\in\Z.\label{eq-help}\end{equation}
 Differentiation of equation (\ref{eq-help}) in $\omega$ at $\omega=1$
gives \[
\langle L_{+}^{-1}\mbox{\boldmath\ensuremath{\phi}},\mbox{\boldmath\ensuremath{\phi}}\rangle_{l^{2}}=-\langle\partial_{\omega}\mbox{\boldmath\ensuremath{\phi}}|_{\omega=1},\mbox{\boldmath\ensuremath{\phi}}\rangle_{l^{2}}=-\frac{1}{2}\frac{d}{d\omega}\|\mbox{\boldmath\ensuremath{\phi}}\|_{l^{2}}^{2}\biggr|_{\omega=1}=-\frac{N}{2p}+{\cal O}(\epsilon),\]
 where in the last equality we used Proposition \ref{proposition-existence}
and the anti-continuum limit \[
\epsilon=0:\quad\|\mbox{\boldmath\ensuremath{\phi}}(\omega)\|_{l^{2}}^{2}=N\omega^{1/p}.\]
 Therefore, $p_{0}=1$ for small $\epsilon>0$.

By Proposition \ref{proposition-stability}, we have $n(L_{-})=n_{0}$
and $N_{i}^{-}\geq n_{0}$. Also, $n(L_{+})=N$. Using the count (\ref{count-eigenvalues}),
we have for small $\epsilon>0$ \begin{equation}
N_{r}^{+}=0,\quad N_{r}^{-}=N-1-n_{0},\quad N_{i}^{-}=n_{0},\quad N_{c}=0.\label{count-eigenvalues-explicit}\end{equation}

Equality (\ref{count-eigenvalues-explicit}) shows that besides the
small and zero eigenvalues described by Proposition \ref{proposition-stability},
the spectral problem (\ref{spectral-stability}) may only have the
continuous spectrum and the eigenvalues on $i\R$ with positive energy.
These eigenvalues of positive energy are called the {\em internal
modes} and existence of such eigenvalues for small $\epsilon>0$
is the main theme of this article.

\section{The resolvent operator for the limiting configuration}

Let us consider the truncated spectral problem (\ref{spectral-stability})
after $\mbox{\boldmath\ensuremath{\phi}}$ is replaced by $\mbox{\boldmath\ensuremath{\phi}}^{(0)}$.
The resolvent operator is defined from the inhomogeneous system \begin{equation}
\left\{ \begin{array}{l}
-\epsilon(u_{n+1}-2u_{n}+u_{n-1})+u_{n}-(2p+1)\sum_{m\in U_{+}\cup U_{-}}\delta_{n,m}u_{m}+\lambda w_{n}=F_{n},\\
-\epsilon(w_{n+1}-2w_{n}+w_{n-1})+w_{n}-\sum_{m\in U_{+}\cup U_{-}}\delta_{n,m}w_{m}-\lambda u_{n}=G_{n},\end{array}\right.\quad n\in\N,\label{equation-resolvent-1}\end{equation}
 where ${\bf F},{\bf G}\in l^{2}(\Z)$ are given. Since we are interested
in the continuous spectrum and eigenvalues on $i\R$, we set $\lambda=-i\Omega$
and use new coordinates \[
\left\{ \begin{array}{l}
a_{n}:=u_{n}+iw_{n},\quad b_{n}:=u_{n}-iw_{n},\\
f_{n}:=F_{n}+iG_{n},\quad g_{n}:=F_{n}-iG_{n},\end{array}\right.\quad n\in\Z.\]
 The inhomogeneous system (\ref{equation-resolvent-1}) transforms
to the equivalent form \begin{equation}
\left\{ \begin{array}{l}
-\epsilon(a_{n+1}-2a_{n}+a_{n-1})+a_{n}-\sum_{m\in U_{+}\cup U_{-}}\delta_{n,m}((1+p)a_{m}+pb_{m})-\Omega a_{n}=f_{n},\\
-\epsilon(b_{n+1}-2b_{n}+b_{n-1})+b_{n}-\sum_{m\in U_{+}\cup U_{-}}\delta_{n,m}(pa_{m}+(1+p)b_{m})+\Omega b_{n}=g_{n},\end{array}\right.\label{equation-resolvent-2}\end{equation}
 which can be rewritten in the operator form \begin{equation}
L\left[\begin{array}{cc}
{\bf a}\\
{\bf b}\end{array}\right]-\Omega\left[\begin{array}{cc}
{\bf a}\\
{\bf b}\end{array}\right]=\left[\begin{array}{cc}
{\bf f}\\
-{\bf g}\end{array}\right],\quad L=\left[\begin{array}{cc}
-\epsilon\Delta+I-(1+p)V & -pV\\
pV & \epsilon\Delta-I+(1+p)V,\end{array}\right],\label{equation-resolvent-3}\end{equation}
 where $\Delta:l^{2}(\Z)\to l^{2}(\Z)$ is the discrete Laplacian
operator \[
(\Delta u)_{n}:=u_{n+1}-2u_{n}+u_{n-1},\quad n\in\Z\]
 and $V:l^{2}(\Z)\to l^{2}(\Z)$ is the associated compact potential
\[
(Vu)_{n}=\sum_{m\in U_{+}\cup U_{-}}\delta_{n,m}u_{m},\quad n\in\Z.\]

Let $R_{0}(\lambda):l^{2}(\Z)\to l^{2}(\Z)$ be a free resolvent of
the discrete Schrödinger operator $-\Delta$ for $\lambda\notin\sigma(-\Delta)\equiv[0,4]$.
The free resolvent was studied recently by Komech, Kopylova, \& Kunze
\cite{KKK}. The free resolvent operator can be expressed in the Green
function form \begin{equation}
\forall{\bf f}\in l^{2}(\Z):\quad(R_{0}(\lambda){\bf f})_{n}=\frac{1}{2i\sin z(\lambda)}\sum_{m\in\Z}e^{-iz(\lambda)|n-m|}f_{m},\label{green-function}\end{equation}
 where $z(\lambda)$ is a unique solution of the transcendental equation
for $\lambda\notin[0,4]$ \begin{equation}
2-2\cos z(\lambda)=\lambda,\quad{\rm Re}z(\lambda)\in[-\pi,\pi),\;\;{\rm Im}z(\lambda)<0.\label{transcendental-equation-1}\end{equation}

The limiting absorption principle (see, e.g., Pelinovsky \& Stefanov
\cite{PS08}) states that a bounded operator $R_{0}(\lambda):l^{2}(\Z)\to l^{2}(\Z)$
for $\lambda\notin[0,4]$ admits the limits \[
R_{0}^{\pm}(\omega)=\lim_{\mu\downarrow0}R_{0}(\omega\pm i\mu):l_{\sigma}^{2}(\Z)\to l_{-\sigma}^{2}(\Z),\quad\sigma>\frac{1}{2}\]
 for any fixed $\omega\in(0,4)$.

The limiting free resolvent operators $R_{0}^{\pm}(\omega)$ can also
be expressed in the Green function form \begin{equation}
\forall{\bf f}\in l^{1}(\Z):\quad(R_{0}^{\pm}(\omega){\bf f})_{n}=\frac{1}{2i\sin\theta_{\pm}(\omega)}\sum_{m\in\Z}e^{-i\theta_{\pm}(\omega)|n-m|}f_{m},\label{green-function-continuous}\end{equation}
 where $\theta_{\pm}(\omega)=\pm\theta(\omega)$ and $\theta(\omega)$
is a unique solution of the transcendental equation for $\omega\in[0,4]$
\begin{equation}
2-2\cos\theta(\omega)=\omega,\quad{\rm Re}\theta(\omega)\in[-\pi,0],\quad{\rm Im}\theta(\omega)=0.\label{transcendental-equation-2}\end{equation}
 The limiting operators $R_{0}^{\pm}(\omega):l^{1}(\Z)\to l^{\infty}(\Z)$
are bounded for any fixed $\omega\in(0,4)$ but diverge as $\omega\downarrow0$
and $\omega\uparrow4$. These divergences follow from the Puiseux
expansion \begin{equation}
\forall{\bf f}\in l_{2}^{1}(\Z):\quad(R_{0}^{\pm}(\omega){\bf f})_{n}=\frac{1}{2i\theta^{\pm}(\omega)}\sum_{m\in\mathbb{Z}}f_{m}-\frac{1}{2}\sum_{m\in\mathbb{Z}}|n-m|f_{m}+(\hat{R}_{0}^{\pm}(\omega){\bf f})_{n},\label{Puiseux-free}\end{equation}
 where \[
\exists C>0:\quad\|\hat{R}_{0}^{\pm}(\omega){\bf f}\|_{l^{\infty}}\leq C|\theta^{\pm}(\omega)|\|{\bf f}\|_{l_{2}^{1}}.\]

Divergences of $R_{0}^{\pm}(\omega)$ at the end points $\omega=0$
and $\omega=4$ indicate {\em resonances}, which may result in
the bifurcation of new eigenvalues from the continuous spectrum on
$[0,4]$ either for $\lambda<0$ or $\lambda>4$, when $-\Delta$
is perturbed by a small potential in $l^{2}(\Z)$.

Let us denote the solution of the inhomogeneous system (\ref{equation-resolvent-3})
by \begin{equation}
\left[\begin{array}{cc}
{\bf a}\\
{\bf b}\end{array}\right]=R_{L}(\Omega)\left[\begin{array}{cc}
{\bf f}\\
-{\bf g}\end{array}\right],\quad R_{L}(\Omega)=\left[\begin{array}{cc}
R_{11}(\Omega) & R_{12}(\Omega)\\
R_{21}(\Omega) & R_{22}(\Omega)\end{array}\right].\label{equation-resolvent-4}\end{equation}

The following theorem represents the main result of this section.
This theorem is valid for the simply connected sets $U_{+}\cup U_{-}$,
which are defined by the following definition.

\begin{definition} \label{definition-simply-connected} We say that
the set $U_{+}\cup U_{-}$ is simply connected if no elements in $\Z\backslash\{U_{+}\cup U_{-}\}$
are located between elements in $U_{+}\cup U_{-}$. \end{definition}

\begin{theorem} Fix $U_{+},U_{-}\subset\Z$ such that $U_{+}\cap U_{-}=\varnothing$,
$N:=|U_{+}|+|U_{-}|<\infty$, and $U_{+}\cup U_{-}$ is simply connected.
There exist small $\epsilon_{0}>0$ and $\delta>0$ such that for
any fixed $\epsilon\in(0,\epsilon_{0})$ the resolvent operator \[
R_{L}(\Omega):l^{2}(\Z)\times l^{2}(\Z)\to l^{2}(\Z)\times l^{2}(\Z)\]
 is bounded for any $\Omega\notin B_{\delta}(0)\cup[1,1+4\epsilon]\cup[-1-4\epsilon,-1]$.
Moreover, $R_{L}(\Omega)$ has exactly $2N$ poles (counting multiplicities)
inside $B_{\delta}(0)$ and admits the limits \[
R_{L}^{\pm}(\Omega):=\lim_{\mu\downarrow0}R_{L}(\Omega\pm i\mu)\]
 such that for any $\Omega\in[1,1+4\epsilon]\cup[-1-4\epsilon,-1]$
and any $\epsilon\in(0,\epsilon_{0})$, there is $C>0$ such that
\[
\|R_{L}^{\pm}(\Omega)\|_{l_{1}^{1}\times l_{1}^{1}\to l^{\infty}\times l^{\infty}}\leq C\epsilon^{-1}.\]
 \label{theorem-resolvent} \end{theorem}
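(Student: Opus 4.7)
The strategy is to reduce invertibility of $L - \Omega$ to a finite-dimensional matrix problem via a Lippmann--Schwinger decomposition, exploiting the fact that the potential $V$ has rank $N$. Write $L = L_0 + L_V$, where $L_0$ is the diagonal free part with blocks $-\epsilon\Delta + I$ and $\epsilon\Delta - I$, and $L_V := L - L_0$ couples the two components through $V$. Since $V$ has rank $N$ and the $2\times 2$ coefficient matrix governing the coupling has determinant $-(2p+1) \neq 0$, the operator $L_V$ has rank exactly $2N$, with range supported on pairs of sequences on $U_+ \cup U_-$. The second resolvent identity gives $R_L(\Omega) = (I + R_{L_0}(\Omega) L_V)^{-1} R_{L_0}(\Omega)$, and by restricting $I + R_{L_0}(\Omega)L_V$ to the $2N$-dimensional range of $L_V$, its invertibility on $l^2 \times l^2$ is equivalent to the invertibility of a $2N \times 2N$ matrix $M(\Omega)$ assembled from free resolvent entries at sites in $U_+ \cup U_-$.

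The free resolvent rescales to the standard discrete Laplacian resolvent $R_0$: $(-\epsilon\Delta + I - \Omega)^{-1} = \epsilon^{-1} R_0((\Omega-1)/\epsilon)$ and $(\epsilon\Delta - I - \Omega)^{-1} = -\epsilon^{-1} R_0(-(\Omega+1)/\epsilon)$, and the prefactor $\epsilon^{-1}$ is the source of the final estimate. For $\Omega \notin B_\delta(0) \cup [1, 1+4\epsilon] \cup [-1-4\epsilon, -1]$ both arguments lie outside $[0,4]$ and $R_0$ is bounded analytic, giving a bounded $R_L(\Omega)$; a Rouch\'e-type argument in $\epsilon$, starting from the $\epsilon = 0$ configuration where $L$ has a $2N$-dimensional generalized kernel at $\Omega = 0$, produces exactly $2N$ poles of $R_L$ inside $B_\delta(0)$. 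For $\Omega$ in one of the two bands, one argument lies in $[0, 4]$ and we pass to the limit via the limiting absorption principle, using the explicit Green functions (\ref{green-function-continuous}) together with the Puiseux expansion (\ref{Puiseux-free}) near the band endpoints.

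The main obstacle is uniform control of $M(\Omega)^{-1}$ on the continuous spectrum bands. Under simple connectedness, after translation $U_+ \cup U_- = \{1, \ldots, N\}$, so the Green function entries $e^{-i\theta^\pm|n-m|}/(2i\sin\theta^\pm)$ with $n,m \in \{1,\ldots,N\}$ acquire a Toeplitz structure in $|n-m|$. Near the band endpoints the Puiseux expansion (\ref{Puiseux-free}) shows that $R_0^\pm$ has a single rank-one divergent term proportional to the constant vector on $U_+ \cup U_-$. The plan is to exploit the Toeplitz structure to show that the subleading regular part of $M(\Omega)$ is uniformly invertible on the bands, so that by a Sherman--Morrison rank-one update argument the divergent contribution cancels out of $M(\Omega)^{-1}$ and the $O(\epsilon^{-1})$ prefactor from $R_{L_0}$ survives to $R_L^\pm$. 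The weighted input space $l_1^1$ is chosen so that the first moment required in the Puiseux expansion is finite; without simple connectedness, gaps in $U_+ \cup U_-$ would introduce additional oscillatory phase factors in $M(\Omega)$ that break the rank-one cancellation, and the case study in Section 5 exhibits a configuration where $M(\Omega)$ then acquires genuine zeros inside the bands.
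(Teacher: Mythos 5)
Your overall architecture coincides with the paper's: reduce to a $2N\times 2N$ matrix through the rank-$2N$ potential, rescale the free resolvent via $\lambda_\pm=(\pm\Omega-1)/\epsilon$ (the source of the $\epsilon^{-1}$), count the $2N$ poles near $\Omega=0$ from analyticity in $\epsilon$ of the reduced determinant, and treat the band endpoints with the Puiseux expansion. The gap is that the two finite-dimensional facts carrying all of the content are announced rather than proved, and one of them is misattributed. First, invertibility of the reduced matrix in the \emph{interior} of the band: the paper computes $\det Q(q_1,\dots,q_{N-1})=\prod_j(1-q_j^2)$ with $q_j=e^{-im_j\theta}$, so invertibility for all $\theta\in(-\pi,0)$ holds exactly when all $m_j=1$ (Lemma \ref{lemma-A}). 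This -- not the endpoint cancellation -- is where simple connectedness enters; the endpoint analysis works for arbitrary gaps (Remark \ref{remark-lemma-4}), whereas gaps produce genuine interior singularities at $\theta=-\pi j/m_k$ (Section 5). Your proposal instead locates the role of simple connectedness in preserving the rank-one cancellation at the endpoints, which is not where the hypothesis is used.

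Second, at the endpoints your Sherman--Morrison step presupposes that the ``subleading regular part'' is uniformly invertible, and this is false in the naive sense: as $\theta\to 0$ the site matrix degenerates to the all-ones matrix $M_+$ with an $(N-1)$-dimensional kernel, so even after extracting the divergent rank-one piece $\tfrac{1}{2i\theta}\mathbf{1}\mathbf{1}^T$ one must verify a nontrivial nondegeneracy condition pairing the distance matrix $R$ against ${\rm Null}(M_+)$. That is the content of Lemma \ref{lemma-condition-bounded}, proved by induction on the relations $(Rw)_j=(Rw)_{j+1}$ under $\sum_j w_j=0$; nothing in the Toeplitz structure gives it for free. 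Moreover, for $p=1$ this condition degenerates at $\epsilon=0$ (the zero eigenvalue of $A_\pm(0)$ acquires Jordan blocks) and an additional ${\cal O}(\epsilon)$ perturbation argument (Appendix A) is required; your proposal carries no $p$-dependence at all. Finally, ``both arguments lie outside $[0,4]$, hence $R_L(\Omega)$ is bounded'' is insufficient off the bands: one still needs uniform invertibility of the reduced matrix in the intermediate regime $4<|\lambda_+|<\delta_+\epsilon^{-1}$, which the paper checks separately with two overlapping asymptotic scalings (Lemma \ref{lemma-semi-annulus}).
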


\begin{remark} The other way to formulate the main theorem is to
say that the end points of the continuous spectrum $\sigma_{c}(L)\equiv[1,1+4\epsilon]\cup[-1-4\epsilon,-1]$
are not resonances and no eigenvalues of the linear operator $L$
may exist outside a small disk $B_{\delta}(0)\subset\C$. The $2N$
eigenvalues inside the small disk $B_{\delta}(0)$ are characterized
in Proposition \ref{proposition-stability}. \end{remark}

Solving the linear system (\ref{equation-resolvent-2}) with the Green
function (\ref{green-function}), we obtain the exact solution for
any $n\in\Z$ \begin{eqnarray}
\left\{ \begin{array}{l}
a_{n}=\frac{1}{2i\epsilon\sin z(\lambda_{+})}\left(\sum_{m\in\Z}e^{-iz(\lambda_{+})|n-m|}f_{m}+\sum_{m\in U_{+}\cup U_{-}}e^{-iz(\lambda_{+})|n-m|}((1+p)a_{m}+pb_{m})\right),\\
b_{n}=\frac{1}{2i\epsilon\sin z(\lambda_{-})}\left(\sum_{m\in\Z}e^{-iz(\lambda_{-})|n-m|}g_{m}+\sum_{m\in U_{+}\cup U_{-}}e^{-iz(\lambda_{-})|n-m|}(pa_{m}+(1+p)b_{m})\right),\end{array}\right.\label{resolvent-a-b}\end{eqnarray}
 where the map $\C\ni\lambda\mapsto z\in\C$ is defined by the transcendental
equation (\ref{transcendental-equation-1}) and \[
\lambda_{\pm}=\frac{\pm\Omega-1}{\epsilon}.\]
 The solution is closed if the set $\{(a_{n},b_{n})\}_{n\in U_{+}\cup U_{-}}$
is found from the linear system of finitely many equations for any
$n\in U_{+}\cup U_{-}$ \begin{eqnarray}
\left\{ \begin{array}{l}
2i\epsilon\sin z(\lambda_{+})a_{n}-\sum_{m\in U_{+}\cup U_{-}}e^{-iz(\lambda_{+})|n-m|}((1+p)a_{m}+pb_{m})=\sum_{m\in\Z}e^{-iz(\lambda_{+})|n-m|}f_{m},\\
2i\epsilon\sin z(\lambda_{-})b_{n}-\sum_{m\in U_{+}\cup U_{-}}e^{-iz(\lambda_{-})|n-m|}(pa_{m}+(1+p)b_{m})=\sum_{m\in\Z}e^{-iz(\lambda_{-})|n-m|}g_{m}.\end{array}\right.\label{system-a-b}\end{eqnarray}

Let us order lattice sites $n\in U_{+}\cup U_{-}$ such that the first
site is placed at $n=0$, the second site is placed at $m_{1}$, the
third site is placed at $m_{1}+m_{2}$, and so on, the last site is
placed at $m_{1}+m_{2}+\cdots+m_{N-1}$, where $N=|U_{+}|+|U_{-}|$
and all $m_{j}>0$. If $U_{+}\cup U_{-}$ is a simply-connected set,
then all $m_{j}=1$.

Let $Q(q_{1},q_{2},\cdots,q_{N-1})$ be the matrix in $\C^{N\times N}$
defined by \begin{equation}
Q(q_{1},q_{2},\cdots,q_{N-1}):=\left[\begin{array}{ccccc}
1 & q_{1} & q_{1}q_{2} & \cdots & q_{1}q_{2}\cdots q_{N-1}\\
q_{1} & 1 & q_{2} & \cdots & q_{2}q_{3}\cdots q_{N-1}\\
q_{1}q_{2} & q_{2} & 1 & \cdots & q_{3}\cdots q_{N-1}\\
\vdots & \vdots & \vdots & \vdots & \vdots\\
q_{1}q_{2}\cdots q_{N-1} & q_{2}\cdots q_{N-1} & q_{3}\cdots q_{N-1} & \cdots & 1\end{array}\right].\label{matrix-M-coefficient}\end{equation}
 Let $q_{j}^{\pm}=e^{-im_{j}z(\lambda_{\pm})}$ and $Q^{\pm}(\Omega,\epsilon):=Q(q_{1}^{\pm},q_{2}^{\pm},\cdots,q_{N-1}^{\pm})$.
The coefficient matrix of the linear system (\ref{system-a-b}) is
given by \begin{equation}
A(\Omega,\epsilon):=\left[\begin{array}{cc}
2i\epsilon\sin z(\lambda_{+})I-(1+p)Q^{+}(\Omega,\epsilon) & -pQ^{+}(\Omega,\epsilon)\\
-pQ^{-}(\Omega,\epsilon) & 2i\epsilon\sin z(\lambda_{-})I-(1+p)Q^{-}(\Omega,\epsilon)\end{array}\right],\label{coefficient-matrix-1}\end{equation}
 where $I$ is an identity matrix in $\C^{N\times N}$.

We split the proof of Theorem \ref{theorem-resolvent} into three
subsections, where solutions of system (\ref{resolvent-a-b}) and
(\ref{system-a-b}) are studied for different values of $\Omega$.

\subsection{Resolvent outside the continuous spectrum}

We consider the resolvent operator $R_{L}(\Omega)$ for a fixed small
$\epsilon\in(0,\epsilon_{0})$. The following lemma shows that $R_{L}(\Omega)$
is a bounded operator from $l^{2}(\Z)\times l^{2}(\Z)$ to $l^{2}(\Z)\times l^{2}(\Z)$
for all $\Omega\in\C$ except three disks of small radii centered
at $\{0,1,-1\}$.

\begin{lemma} There are $\epsilon_{0}>0$ and $\delta,\delta_{\pm}>0$
such that for any $\epsilon\in(0,\epsilon_{0})$, the resolvent operator
$R_{L}(\Omega):l^{2}(\Z)\times l^{2}(\Z)\to l^{2}(\Z)\times l^{2}(\Z)$
is bounded for all $\Omega\in\C\backslash\{B_{\delta}(0)\cup B_{\delta_{+}}(1)\cup B_{\delta_{-}}(-1)\}$.
Moreover, $R_{L}(\Omega)$ has exactly $2N$ poles (counting multiplicities)
inside $B_{\delta}(0)$. \label{lemma-first-part} \end{lemma}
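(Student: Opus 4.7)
The strategy is to reduce boundedness of $R_L(\Omega)$ on $l^2(\Z) \times l^2(\Z)$ to invertibility of the $2N \times 2N$ coefficient matrix $A(\Omega,\epsilon)$ from \eqref{coefficient-matrix-1}, and then to analyze that matrix in the anti-continuum limit. For $\Omega \notin [1, 1+4\epsilon] \cup [-1-4\epsilon,-1]$, the parameters $\lambda_\pm = (\pm\Omega - 1)/\epsilon$ lie outside $\sigma(-\Delta) = [0,4]$, so the free resolvents $R_0(\lambda_\pm): l^2(\Z) \to l^2(\Z)$ are bounded and the Green-function representation \eqref{resolvent-a-b}--\eqref{system-a-b} is well-defined. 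A bounded $l^2 \times l^2$ solution $({\bf a}, {\bf b})$ exists and is unique precisely when the restricted values $\{(a_n, b_n)\}_{n \in U_+ \cup U_-}$ are uniquely determined, i.e., when $A(\Omega, \epsilon)$ is invertible; in that case $\|R_L(\Omega)\|$ is bounded by a constant multiple of $\|A(\Omega, \epsilon)^{-1}\|$ together with the free-resolvent norms.

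Next I would identify the leading-order matrix $A(\Omega, 0)$. Writing $w = e^{-iz(\lambda)}$ with $|w| < 1$, the transcendental equation \eqref{transcendental-equation-1} reads $w + w^{-1} = 2 - \lambda$, and the small-$w$ branch gives $w \sim -1/\lambda$ as $|\lambda| \to \infty$. For $\Omega$ bounded away from $\pm 1$, this yields $w_\pm = O(\epsilon)$, hence $q_j^\pm = w_\pm^{m_j} = O(\epsilon^{m_j})$ and $Q^\pm(\Omega,\epsilon) = I_N + O(\epsilon)$. Using $2i\epsilon \sin z(\lambda_\pm) = \epsilon(w_\pm^{-1} - w_\pm) = 2\epsilon - \epsilon\lambda_\pm - 2\epsilon w_\pm = (1 \mp \Omega) + O(\epsilon)$, substitution into \eqref{coefficient-matrix-1} gives
\[
A(\Omega, 0) \;=\; \begin{bmatrix} -(\Omega + p) I_N & -p\,I_N \\ -p\,I_N & (\Omega - p) I_N \end{bmatrix}, \qquad \det A(\Omega, 0) \;=\; (-\Omega^2)^N,
\]
with the convergence $A(\Omega,\epsilon) \to A(\Omega,0)$ uniform on compact subsets of $\C$ avoiding $\pm 1$.

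The lemma then assembles in three steps. (i) Since $L$ is a bounded operator on $l^2(\Z) \times l^2(\Z)$ with norm $O(1)$ in $\epsilon$, a Neumann-series argument gives boundedness of $R_L(\Omega)$ for $|\Omega| \ge R$ for some fixed $R > 0$, reducing the problem to a bounded region in $\C$. (ii) On the region $\{|\Omega| \le R\} \setminus \{B_\delta(0) \cup B_{\delta_+}(1) \cup B_{\delta_-}(-1)\}$, the lower bound $|\det A(\Omega, 0)| \ge c(\delta) > 0$ combined with uniform convergence yields $\|A(\Omega,\epsilon)^{-1}\| \le C$ for $\epsilon \in (0, \epsilon_0)$, hence a uniform bound on $\|R_L(\Omega)\|$. (iii) Inside $B_\delta(0)$ the function $\Omega \mapsto \det A(\Omega,\epsilon)$ is analytic (since $|\lambda_\pm| \sim 1/\epsilon$ keeps $\lambda_\pm$ far from $[0,4]$), and Rouch\'e's theorem applied on $\partial B_\delta(0)$ against $(-\Omega^2)^N$ shows that $\det A(\Omega,\epsilon)$ has exactly $2N$ zeros there, each giving a pole of $R_L$ with the matching multiplicity.

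The main technical obstacle will be uniform control near the excluded disks $B_{\delta_\pm}(\pm 1)$: as $\Omega$ approaches $\pm 1$, one of $\lambda_\pm$ shrinks to $O(\delta_\pm/\epsilon)$ rather than being much larger, so the expansion of $z(\lambda_\pm)$ and the entries of $A$ must be carried with sufficient care to keep $A(\Omega,\epsilon) - A(\Omega,0)$ uniformly small on this boundary strip. A secondary, bookkeeping concern is to confirm that the $2N$ zeros produced by Rouch\'e's theorem match, counting algebraic multiplicity, the $2N$ small eigenvalues of the spectral problem \eqref{spectral-stability} identified in Proposition \ref{proposition-stability}.
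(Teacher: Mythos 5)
Your proposal is correct and follows essentially the same route as the paper's proof: reduce boundedness of $R_L(\Omega)$ off the continuous spectrum to invertibility of the $2N\times 2N$ coefficient matrix $A(\Omega,\epsilon)$, identify the limiting matrix $A(\Omega,0)$ with $\det A(\Omega,0)=(-\Omega^2)^N$, and count the $2N$ zeros of the determinant inside $B_\delta(0)$. The one place you genuinely diverge is peripheral: you dispatch large $|\Omega|$ by a Neumann series for the bounded operator $L$ and then work on a compact region with uniform convergence $A(\Omega,\epsilon)\to A(\Omega,0)$, whereas the paper covers $\C\backslash\{B_{\delta_+}(1)\cup B_{\delta_-}(-1)\}$ by three overlapping unbounded regions ($S_{\delta_0}$ and $S_{\delta_\pm}$) and tracks the branch and asymptotics of $z(\lambda_\pm)$ explicitly in each. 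Your version is slightly more elementary for the unbounded part, and the ``technical obstacle'' you flag near $\partial B_{\delta_\pm}(\pm1)$ is precisely what the paper's sector decomposition handles: there $|\lambda_\pm|\geq\delta_\pm/\epsilon\to\infty$, so $w_\pm={\cal O}(\epsilon/\delta_\pm)$ and $Q^\pm\to I$ still holds uniformly, which closes the gap you anticipated.
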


\begin{proof} From the property of the free resolvent operator $R_{0}(\lambda)$,
we know that the Green function in the representation (\ref{resolvent-a-b})
is bounded and exponentially decaying as $|n|\to\infty$ for any $\Omega$
such that $\lambda_{\pm}\notin[0,4]$. This gives $\Omega\notin\sigma_{c}(L)\equiv[1,1+4\epsilon]\cup[-1-4\epsilon,-1]$.
Therefore, $R_{L}(\Omega)$ is bounded map from $l^{2}(\Z)\times l^{2}(\Z)$
to $l^{2}(\Z)\times l^{2}(\Z)$ for any $\Omega\notin\sigma_{c}(L)$
if and only if the system of linear equations (\ref{system-a-b})
is uniquely solvable. We shall now consider the invertibility of the
coefficient matrix $A(\Omega,\epsilon)$ of the linear system (\ref{system-a-b})
in various domains in the $\Omega$-plane for small $\epsilon>0$.
Figure \ref{fig:omega} shows schematically the location of these
domains on the $\Omega$-plane.

\begin{figure}
\begin{centering}
\includegraphics[width=0.5\textwidth]{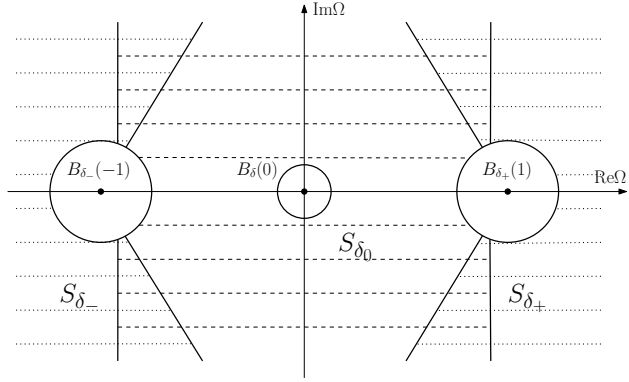} 
\par\end{centering}

\caption{Schematic display of various domains in the $\Omega$-plane.}

\label{fig:omega} 
\end{figure}

Fix $\delta_{0}\in(0,1)$. Let $\Omega$ belong to the vertical strip
\[
S_{\delta_{0}}:=\left\{ \Omega\in\C:\quad{\rm Re}(\Omega)\in[-\delta_{0},\delta_{0}]\right\} .\]
 Then $z(\lambda_{\pm})=-i\kappa_{\pm}$ are uniquely determined from
the equation \[
e^{\kappa_{\pm}}+e^{-\kappa_{\pm}}-2=\frac{1\mp\Omega}{\epsilon},\quad{\rm Re}(\kappa_{\pm})>0,\quad{\rm Im}(\kappa_{\pm})\in[-\pi,\pi),\]
 which admits the asymptotic expansion \[
e^{\kappa_{\pm}}=\frac{1\mp\Omega}{\epsilon}+2-\frac{\epsilon}{1\mp\Omega}+{\cal O}(\epsilon^{2})\quad\mbox{{\rm as}}\quad\epsilon\to0\]
 and \[
e^{-\kappa_{\pm}}=\frac{\epsilon}{1\mp\Omega}+{\cal O}(\epsilon^{2})\quad\mbox{{\rm as}}\quad\epsilon\to0.\]

Therefore, both $\epsilon\sinh(\kappa_{\pm})$ and $Q^{\pm}(\Omega,\epsilon)$
are analytic in $\epsilon$ near $\epsilon=0$ and \[
2i\epsilon\sin z(\lambda_{\pm})=2\epsilon\sinh(\kappa_{\pm})=1\mp\Omega+2\epsilon+{\cal O}(\epsilon^{2})\quad\mbox{{\rm as}}\quad\epsilon\to0\]
 and \[
Q^{\pm}(\Omega,\epsilon)=I+{\cal O}(\epsilon)\quad\mbox{{\rm as}}\quad\epsilon\to0.\]

It becomes now clear that $A(\Omega,\epsilon)$ is analytic in $\Omega\in S_{\delta_{0}}$
and $\epsilon\in(-\epsilon_{0},\epsilon_{0})$ with the limit \begin{equation}
A(\Omega,0)=\left[\begin{array}{cc}
-(p+\Omega)I & -pI\\
-pI & -(p-\Omega)I\end{array}\right].\label{matrix-A-zero}\end{equation}

Matrix $A(\Omega,0)\in\C^{2N\times2N}$ is singular only for $\Omega=0$.
Thanks to analyticity of $A(\Omega,\epsilon)$, the determinant $D(\Omega,\epsilon)={\rm det}A(\Omega,\epsilon)$
is also analytic in these variables and \[
D(\Omega,\epsilon)=(-\Omega^{2})^{N}+{\cal O}(\epsilon)\quad\mbox{{\rm as}}\quad\epsilon\to0.\]
 Therefore, there exist $2N$ zeros of $D(\Omega,\epsilon)$ for small
$\epsilon\in(0,\epsilon_{0})$ in a small disk $B_{\delta}(0)$ with
$\delta={\cal O}(\epsilon^{1/2N})$. By Cramer's rule, these zeros
of $D(\Omega,\epsilon)$ give poles of $R_{L}(\Omega)$.

Fix $\delta_{+}\in(0,1)$ and $\theta_{+}\in(\frac{\pi}{2},\pi)$.
We now consider $\Omega$ in the domain \[
S_{\delta_{+}}:=\left\{ \Omega=1+re^{i\theta},\quad r>\delta_{+},\;\;\theta\in(-\theta_{+},\theta_{+})\right\} .\]
 In this domain, we have the same presentation for $z(\lambda_{-})=-i\kappa_{-}$
but a different presentation for $z(\lambda_{+})=-i\kappa_{+}-\pi$.
Now $\kappa_{+}$ is uniquely determined from the equation \[
e^{\kappa_{+}}+e^{-\kappa_{+}}+2=\frac{\Omega-1}{\epsilon}=\frac{r}{\epsilon}e^{i\theta},\quad{\rm Re}(\kappa_{+})>0,\quad{\rm Im}(\kappa_{+})\in[0,2\pi),\]
 which admits the asymptotic expansions \[
e^{\kappa_{+}}=\frac{\Omega-1}{\epsilon}-2-\frac{\epsilon}{\Omega-1}+{\cal O}(\epsilon^{2})\quad\mbox{{\rm as}}\quad\epsilon\to0\]
 and \[
2i\epsilon\sin z(\lambda_{+})=1-\Omega+2\epsilon+{\cal O}(\epsilon^{2})\quad\mbox{{\rm as}}\quad\epsilon\to0.\]

Since ${\rm Re}(\kappa_{+})\to\infty$ as $\epsilon\to0$, $A(\Omega,0)$
is the same as matrix (\ref{matrix-A-zero}) and it is invertible
for $\Omega\in S_{\delta_{+}}$. Similar arguments can be developed
for \[
S_{\delta_{-}}:=\left\{ \Omega=-1+re^{i\theta},\quad r>\delta_{-},\;\;\theta\in(\theta_{-},2\pi-\theta_{-})\right\} ,\]
 where $\delta_{-}\in(0,1)$ and $\theta_{-}\in(0,\frac{\pi}{2})$.
Because there are choices of $\delta_{0},\delta_{\pm}>0$ such that
\[
S_{\delta_{0}}\cup S_{\delta_{+}}\cup S_{\delta_{-}}=\mathbb{C}\backslash\{B_{\delta_{+}}(1)\cup B_{\delta_{-}}(-1)\},\]
 we obtain the assertion of the lemma. \end{proof}

\begin{remark} The proof of Lemma \ref{lemma-first-part} implies
that poles of $R_{L}(\Omega)$ may have size $|\Omega|={\cal O}(\epsilon^{1/2N})$.
The results of the perturbation expansions (see \cite{PKF1} for details)
imply that the eigenvalues bifurcating from $0$ in the full spectral
problem (\ref{spectral-stability}) have size ${\cal O}(\epsilon^{1/2})$.
Moreover, the same perturbation expansion technique can be applied
to show that eigenvalues of the truncated spectral problem (\ref{equation-resolvent-1})
have the same size ${\cal O}(\epsilon^{1/2})$. \end{remark}

\subsection{Resolvent inside the continuous spectrum}

We shall now consider the resolvent operator $R_{L}(\Omega)$ inside
the continuous spectrum \[
\sigma_{c}(L):=[1,1+4\epsilon]\cup[-1-4\epsilon,-1].\]
 Thanks to the symmetry of system (\ref{resolvent-a-b})--(\ref{system-a-b})
in $\Omega$, we can consider only one branch of the continuous spectrum
$[1,1+4\epsilon]$. Therefore, we set $\Omega=1+\epsilon\omega$ with
$\omega\in[0,4]$ and define \[
z(\lambda_{+})=z(\omega)\equiv\theta\quad\mbox{{\rm and}}\quad z(\lambda_{-})=z(-2\epsilon^{-1}-\omega)\equiv-i\kappa.\]
 It follows from (\ref{transcendental-equation-1}) and (\ref{transcendental-equation-2})
that $\theta\in[-\pi,0]$ and $\kappa>0$ are uniquely defined from
equations \begin{equation}
2-2\cos(\theta)=\omega,\quad2\epsilon(\cosh(\kappa)-1)=2+\epsilon\omega,\quad\omega\in[0,4].\label{transcendental-equation-3}\end{equation}

The choice of $\theta\in[-\pi,0]$ corresponds to the limiting operator
$R_{0}^{+}(\omega)$ of the free resolvent. Since $R_{0}^{+}(\omega):l_{\sigma}^{2}(\Z)\to l_{-\sigma}^{2}(\Z)$
is well defined for $\omega\in(0,4)$ and $\sigma>\frac{1}{2}$, $R_{L}^{+}(1+\epsilon\omega)$
is a bounded map from $l_{\sigma}^{2}(\Z)\times l_{\sigma}^{2}(\Z)$
to $l_{-\sigma}^{2}(\Z)\times l_{-\sigma}^{2}(\Z)$ for any $\omega\in(0,4)$
and $\sigma>\frac{1}{2}$ if and only if there exists a unique solution
of the linear system (\ref{system-a-b}). On the other hand, the free
resolvent is singular in the limits $\omega\downarrow0$ and $\omega\uparrow4$
and, therefore, we need to be careful in solving system (\ref{resolvent-a-b})--(\ref{system-a-b})
in this limit.

The main result of this section is given by the following theorem.

\begin{theorem} \label{theorem-continuous-spectrum} Let $m_{1}=m_{2}=\cdots m_{N-1}=1$.
There exists $\epsilon_{0}>0$ such that for any $\omega\in[0,4]$
and any $\epsilon\in(0,\epsilon_{0})$, there exist $C>0$ such that
\begin{equation}
\|R_{L}^{+}(1+\epsilon\omega)\|_{l_{1}^{1}\times l_{1}^{1}\to l^{\infty}\times l^{\infty}}\leq C\epsilon^{-1},\label{bound-resolvent-l1-1}\end{equation}
 where the upper sign indicates that $\omega$ is parameterized by
$\omega=2-2\cos(\theta)$ for $\theta\in[-\pi,0]$. \end{theorem}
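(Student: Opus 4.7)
The plan is to reduce the resolvent problem to the finite-dimensional linear system (\ref{system-a-b}) with coefficient matrix $A(\Omega,\epsilon)$, establish uniform invertibility of $A$ for $\omega\in[0,4]$ and $\epsilon\in(0,\epsilon_0)$, and then propagate the interior values to the full lattice via the explicit Green-function representation (\ref{resolvent-a-b}). The coefficient matrix has a natural block structure aligned with the two characteristic parameters $\theta\in[-\pi,0]$ and $\kappa>0$: since $\kappa=\mathcal O(\log(1/\epsilon))$ and $2\epsilon\sinh\kappa=2+\mathcal O(\epsilon)$, the off-diagonal entries $q_j^-=e^{-\kappa}$ of $Q^-(\Omega,\epsilon)$ are of order $\epsilon$, so $A_{22}=2\epsilon\sinh\kappa\cdot I-(1+p)Q^-$ reduces to $(1-p)I+\mathcal O(\epsilon)$ and is uniformly invertible for $p\neq 1$ (the degenerate case $p=1$ is precisely why Appendix A is needed). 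A Schur-complement step then leaves the effective $N\times N$ matrix
$$S(\theta,\epsilon)=2i\epsilon\sin\theta\cdot I-\tfrac{1}{1-p}\,Q_0^+(\theta)+\mathcal O(\epsilon),$$
where $(Q_0^+(\theta))_{jk}=e^{-i|j-k|\theta}$ is Toeplitz thanks to the simply-connected assumption $m_j=1$.

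For $\theta$ bounded away from $0$ and $-\pi$, the Toeplitz matrix $Q_0^+(\theta)$ is nonsingular with determinant bounded below (checked by induction on $N$), so $\|S^{-1}\|=\mathcal O(1)$. The finite system is then solved with $\|(\mathbf{a}|_U,\mathbf{b}|_U)\|_\infty\lesssim\|(\mathbf{f},\mathbf{g})\|_{l^1}$, and the outer Green-function formula gives
$$|a_n|\le\frac{1}{2\epsilon|\sin\theta|}\bigl(\|\mathbf{f}\|_{l^1}+C\|(\mathbf{a},\mathbf{b})|_U\|_\infty\bigr)=\mathcal O(\epsilon^{-1})\|\mathbf{f}\|_{l^1},$$
while the $b$-component, carrying the exponentially decaying kernel $e^{-\kappa|n-m|}$, is even bounded by $\mathcal O(1)\|\mathbf{g}\|_{l^1}$. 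This already yields the desired $\epsilon^{-1}$ bound away from the band edges.

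The main obstacle is uniformity as $\theta\to 0$ or $\theta\to-\pi$, where both $\sin\theta\to 0$ and $Q_0^+(\theta)$ degenerates to a rank-one matrix ($\mathbf{1}\mathbf{1}^T$ at $\theta=0$ and $\mathbf{s}\mathbf{s}^T$ with $\mathbf{s}=(1,-1,1,\dots)^T$ at $\theta=-\pi$). Here I would Taylor-expand $Q_0^+(\theta)=\mathbf{1}\mathbf{1}^T-i\theta M+\mathcal O(\theta^2)$ with $M_{jk}=|j-k|$, decompose $S(\theta,\epsilon)$ along the rank-one kernel and its orthogonal complement, and show that the projection onto $\mathbf{1}$ is regularized by the perturbation $2i\epsilon\sin\theta$ while the transverse complement is regularized at scale $\theta$. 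Combined with the Puiseux expansion (\ref{Puiseux-free}) of the free Green function, the compatibility condition extracted from the rank-one block of (\ref{system-a-b}) produces the key cancellation
$$(1+p)\sum_{m\in U}a_m+p\sum_{m\in U}b_m=-\sum_m f_m+\mathcal O(\theta),$$
which annihilates the leading $1/(\epsilon\theta)$ divergence of the outer formula and leaves a residual bound of order $\epsilon^{-1}$. The weight $s=1$ in $l_1^1$ is needed precisely to control the subleading $-\tfrac{1}{2}\sum_m|n-m|f_m$ term in the Puiseux expansion once the leading piece has cancelled, and the simply-connected hypothesis is essential because any gap $m_j>1$ inserts extra oscillatory factors $e^{-im_j\theta}$ that destroy this clean rank-one structure at the band edges, as Section 5 illustrates for a $2$-site counterexample.
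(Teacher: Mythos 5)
Your overall strategy coincides with the paper's: reduce to the finite linear system (\ref{system-a-b}); invert the coefficient matrix uniformly for $\theta\in(-\pi,0)$ using the structure of $M(\theta)$ (the paper obtains the exact determinant $\prod_j(1-e^{-2im_j\theta})$ by the same recursion you propose by induction); resolve the degeneracy at $\theta=0,-\pi$ by a Lyapunov--Schmidt splitting along the rank-one limit; and use the resulting identity $(1+p)\sum_m a_m+p\sum_m b_m=-\sum_m f_m$ to cancel the $1/\theta$ pole of the free Green function, leaving only the $\epsilon^{-1}$ from the prefactor $1/(2i\epsilon\sin\theta)$. Your identification of where the weight $l^1_1$ and the simply-connected hypothesis enter is also correct.

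There is, however, one genuine gap at the band edges. You assert that the degenerate block is ``regularized at scale $\theta$.'' On the $(N-1)$-dimensional kernel of $\mathbf{1}\mathbf{1}^{T}$ the effective matrix is $i\theta\bigl(2\epsilon I+\tfrac{1}{1-p}P_{\perp}MP_{\perp}\bigr)+\mathcal{O}(\theta^{2})$ with $M_{jk}=|j-k|$, so the claim amounts to the injectivity of $P_{\perp}MP_{\perp}$ on $\{w:\sum_j w_j=0\}$. This is precisely the hypothesis ${\rm Null}(A'(0)|_{{\rm Null}(A(0))})=\{0\}$ of Lemma \ref{lemma-fredholm}, and it is not automatic: the paper proves it in Lemma \ref{lemma-condition-bounded} by the inductive argument that $(Rw)_j=(Rw)_{j+1}$ together with $\sum_j w_j=0$ forces $w_1=\cdots=w_N=0$. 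Without this verification the kernel components of the solution could still grow like $1/\theta$, and the cancellation you invoke would not close; so this step must be carried out, not merely announced. (Minor related slip: it is ${\rm span}\{\mathbf{1}\}$ that is already nondegenerate under $\mathbf{1}\mathbf{1}^{T}$ and needs no $\epsilon\sin\theta$ regularization; the orthogonal complement is the block that degenerates.) A second, smaller limitation: your Schur complement requires $A_{22}\to(1-p)I$ to be invertible, so the entire reduction fails for $p=1$, even for $\theta$ in the interior of $(-\pi,0)$. The paper avoids this by computing $\det A(\theta,0)=(-1)^{N}\det M(\theta)$ for the full $2N\times2N$ matrix (the lower blocks are scalar), which is valid for every $p\in\N$, and invokes Appendix A for $p=1$ only to handle the endpoint degeneracy.
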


To prove Theorem \ref{theorem-continuous-spectrum}, we analyze solutions
of system (\ref{system-a-b}) for $\omega\in[0,4]$. Let us rewrite
explicitly \[
q_{j}^{+}=e^{-im_{j}\theta}\quad{\rm and}\quad q_{j}^{-}=e^{-m_{j}\kappa},\quad j\in\{1,2,...,N-1\}.\]
 The coefficient matrix (\ref{coefficient-matrix-1}) for $\Omega=1+\epsilon\omega$
with $\omega\in[0,4]$ is rewritten in the form \begin{equation}
A(\theta,\epsilon)\equiv\left[\begin{array}{cc}
2i\epsilon\sin(\theta)I-(1+p)M(\theta) & -pM(\theta)\\
-pN(\kappa) & 2\epsilon\sinh(\kappa)I-(1+p)N(\kappa)\end{array}\right],\label{coefficient-matrix-2}\end{equation}
 where $M(\theta)\equiv Q(q_{1}^{+},q_{2}^{+},\cdots,q_{N-1}^{+})$
and $N(\kappa)\equiv Q(q_{1}^{-},q_{2}^{-},\cdots,q_{N-1}^{-})$.
Note that $\theta$ and $M(\theta)$ are $\epsilon$-independent,
whereas $N(\kappa)$ depends on $\epsilon$ via $\kappa$. The linear
system (\ref{system-a-b}) is now expressed in the matrix form \begin{equation}
A(\theta,\epsilon)c=h(\theta,\epsilon),\label{abstract-linear-system}\end{equation}
 where components of $c\in\C^{2N}$ and $h\in\C^{2N}$ are given by
\[
\left\{ \begin{array}{l}
a_{n}\\
b_{n}\end{array}\right\} _{n\in U_{+}\cup U_{-}}\quad\mbox{{\rm and}}\quad\left\{ \begin{array}{l}
\sum_{m\in\Z}e^{-i\theta|n-m|}f_{m}\\
\sum_{m\in\Z}e^{-\kappa|n-m|}g_{m}\end{array}\right\} _{n\in U_{+}\cup U_{-}}.\]

Thanks to the asymptotic expansion \[
e^{\kappa}=\frac{2}{\epsilon}+2+\omega-\frac{\epsilon}{2}+{\cal O}(\epsilon^{2})\quad\mbox{{\rm as}}\quad\epsilon\to0,\]
 we have \[
2\epsilon\sinh(\kappa)=2+(2+\omega)\epsilon+{\cal O}(\epsilon^{2})\quad\mbox{{\rm as}}\quad\epsilon\to0.\]
 Both $A(\theta,\epsilon)$ and $h(\theta,\epsilon)$ are analytic
in $\theta\in[-\pi,0]$ and $\epsilon\in(-\epsilon_{0},\epsilon_{0})$.
The following lemma establishes the invertibility condition for matrix
$A(\theta,\epsilon)$.

\begin{lemma} For any $\epsilon\in(0,\epsilon_{0})$, matrix $A(\theta,\epsilon)$
has a zero eigenvalue of geometric and algebraic multiplicities $N-1$
for $\theta=-\pi$ and $\theta=0$. If $m_{1}=m_{2}=\cdots=m_{N-1}=1$,
matrix $A(\theta,\epsilon)$ is invertible for any $\theta\in(-\pi,0)$.
\label{lemma-A} \end{lemma}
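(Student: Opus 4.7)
The plan is to compute $\det A(\theta,\epsilon)$ by a block reduction and then handle the endpoints by direct analysis combined with analytic perturbation theory. Since the two lower blocks $-pN(\kappa)$ and $2\epsilon\sinh\kappa\,I-(1+p)N(\kappa)$ commute (both being polynomials in $N(\kappa)$), the block determinant identity $\det\bigl[\begin{smallmatrix}A&B\\C&D\end{smallmatrix}\bigr]=\det(AD-BC)$ applies and yields
\[
\det A(\theta,\epsilon) \;=\; \det\bigl[\mu\nu I - (1+p)\mu N(\kappa) - (1+p)\nu M(\theta) + (1+2p)\,M(\theta)N(\kappa)\bigr],
\]
with $\mu:=2i\epsilon\sin\theta$, $\nu:=2\epsilon\sinh\kappa$. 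Using $\mu=O(\epsilon)$, $\nu=2+O(\epsilon)$, and $N(\kappa)=I+O(\epsilon)$ (from $e^{-\kappa}=\epsilon/2+O(\epsilon^2)$), the bracketed expression reduces at $\epsilon=0$ to $-M(\theta)$.

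For the simply connected case $m_1=\cdots=m_{N-1}=1$, the matrix $M(\theta)$ is Toeplitz with entries $q^{|i-j|}$ where $q=e^{-i\theta}$, and the column operation $\mathrm{col}_k\leftarrow\mathrm{col}_k-q\,\mathrm{col}_{k-1}$ iterated from $k=N$ down yields $\det M(\theta)=(1-q^2)^{N-1}$. Hence $\det A(\theta,0)=(-1)^N(1-e^{-2i\theta})^{N-1}$, which vanishes precisely at $\theta\in\{0,-\pi\}$, to order exactly $N-1$ at each; by continuity in $\epsilon$, $A(\theta,\epsilon)$ is invertible on any compact subinterval of $(-\pi,0)$ for small $\epsilon>0$. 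At the endpoints, $M(0)=\mathbf{1}\mathbf{1}^T$ has $(N-1)$-dimensional kernel $\{v:\mathbf{1}^T v=0\}$; solving $A(0,\epsilon)c=0$ directly, the top block yields the single linear constraint $(1+p)\mathbf{1}^T c_1 + p\mathbf{1}^T c_2 = 0$, while the bottom block, whose coefficient matrix $(1-p)I + O(\epsilon)$ is invertible for $p\neq 1$ (with $p=1$ deferred to Appendix A), uniquely determines $c_2$ in terms of $c_1$, giving $\dim\ker A(0,\epsilon)=N-1$. Semi-simplicity (so that algebraic multiplicity also equals $N-1$) is verified by checking $\mathrm{Image}\,A(0,\epsilon)\cap\ker A(0,\epsilon)=\{0\}$, which reduces to the non-vanishing of a scalar evaluating to $N/(1-p)+O(\epsilon)$. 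The case $\theta=-\pi$ is treated identically using $M(-\pi)=D\mathbf{1}\mathbf{1}^T D$ with $D=\mathrm{diag}(1,-1,1,\ldots)$.

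The main obstacle is extending invertibility to punctured neighborhoods of the endpoints, where $\det A(\theta,\epsilon)$ vanishes. The crucial observation is that $\det A(\theta,0)$ vanishes at $\theta=0$ to order \emph{exactly} $N-1$, matching the semi-simple algebraic multiplicity established just above. By analytic perturbation theory for semi-simple eigenvalues, the $N-1$ zero eigenvalues of $A(0,\epsilon)$ split under analytic variation of $\theta$ into branches $\lambda_j(\theta)=\mu_j\theta+O(\theta^2)$, where $\mu_j$ are eigenvalues of the reduced operator $P\,\partial_\theta A(0,\epsilon)\,P$ acting on $\ker A(0,\epsilon)$. Their product, multiplied by the nonzero contribution of the remaining $N+1$ eigenvalues bounded away from zero, equals the coefficient of $\theta^{N-1}$ in the Taylor expansion of $\det A(\theta,\epsilon)$ about $\theta=0$; at $\epsilon=0$ this coefficient is $(-1)^N(2i)^{N-1}\neq 0$, and by continuity it remains nonzero for small $\epsilon>0$. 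Hence all $\mu_j\neq 0$, so $\lambda_j(\theta)\neq 0$ for $0<|\theta|<\delta$. Combining with the invertibility on compact subintervals established earlier establishes invertibility on the whole punctured neighborhood of $\theta=0$; the analogous argument near $\theta=-\pi$ then closes the interval.
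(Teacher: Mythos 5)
Your proof is correct, and its skeleton matches the paper's: pass to the limit $\epsilon\to 0$, reduce invertibility of $A(\theta,0)$ to invertibility of $M(\theta)$, compute $\det M(\theta)$ explicitly, identify the $(N-1)$-dimensional kernel at the endpoints from the rank-one structure of $M_\pm$, and establish semi-simplicity for $p\neq 1$ from the symmetry of $M_\pm$ (equivalently, the nondegeneracy of the pairing between ${\rm Null}(A_\pm)$ and ${\rm Null}(A_\pm^*)$), deferring $p=1$ to the appendix. Two differences are worth noting. First, the paper proves the determinant formula $D_N=(1-q_1^2)\cdots(1-q_{N-1}^2)$ for arbitrary $q_j$ by a recursion in $N$; your column-operation computation only covers the Toeplitz case $m_j=1$, which suffices for this lemma but loses the general formula that the paper reuses in Section 5 for non-simply-connected configurations. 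Second, and more substantively, you add an explicit argument for invertibility in punctured neighborhoods of $\theta=0$ and $\theta=-\pi$, deducing that all first-order eigenvalue splittings $\mu_j$ of the semi-simple zero eigenvalue are nonzero from the non-vanishing of the $\theta^{N-1}$ Taylor coefficient of $\det A(\theta,\epsilon)$. The paper's proof of this lemma relies only on the pointwise limit $\epsilon\to0$ (which is not uniform as $\theta$ approaches the endpoints) and instead establishes the condition ${\rm Null}(\partial_\theta A_\pm|_{{\rm Null}(A_\pm)})=\{0\}$ separately in Lemma \ref{lemma-condition-bounded} by an explicit computation with the matrix $R$; your determinant-coefficient argument is a slick alternative route to the same non-degeneracy and closes a uniformity point the paper leaves implicit. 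One small inaccuracy: the scalar governing semi-simplicity is not $N/(1-p)$; the pairing of $u=[(1-p)w;pw]\in{\rm Null}(A_\pm(0))$ with $v=[w';0]\in{\rm Null}(A_\pm^*(0))$ is $(1-p)\langle w,w'\rangle$, so the relevant nondegeneracy factor is $1-p$. This does not affect your conclusion, since the only thing needed is $p\neq1$.
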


\begin{proof} We use the fact that matrix $A(\theta,\epsilon)$ is
analytic in $\epsilon$ for small $\epsilon\in(-\epsilon_{0},\epsilon_{0})$.
Therefore, it remains invertible if $A(\theta,0)$ is invertible.
To consider the limit $\epsilon\to0$, we note that $\kappa\to\infty$
and $N(\kappa)\to I$ as $\epsilon\to0$, so we have \[
A(\theta,0)=\left[\begin{array}{cc}
-(1+p)M(\theta) & -pM(\theta)\\
-pI & (1-p)I\end{array}\right].\]

For any $p\in\N$, matrix $A(\theta,0)$ is invertible if and only
if matrix $M(\theta)$ is invertible. Let us then compute \[
D_{N}(q_{1},q_{2},\cdots,q_{N-1}):={\rm det}Q(q_{1},q_{2},\cdots,q_{N-1}).\]
 We note that $D_{N}(\pm1,q_{2},\cdots,q_{N-1})=0$ and $D_{N}(q_{1},q_{2},\cdots,q_{N-1})$
is a quadratic polynomial of $q_{1}$. Therefore, \[
D_{N}(q_{1},q_{2},\cdots,q_{N-1})=(1-q_{1}^{2})D_{N}(0,q_{2},\cdots,q_{N-1})=(1-q_{1}^{2})D_{N-1}(q_{2},\cdots,q_{N-1}).\]
 Continuing the expansion recursively, we obtain the exact formula
\begin{equation}
D_{N}(q_{1},q_{2},\cdots,q_{N-1})=(1-q_{1}^{2})(1-q_{2}^{2})\cdots(1-q_{N-1}^{2}),\label{determinant-D-N}\end{equation}
 from which we conclude that $Q(q_{1},q_{2},\cdots,q_{N-1})$ is invertible
if and only if all $q_{j}\neq\pm1$. This implies that $M(\theta)$
is invertible if and only if all $e^{-im_{j}\theta}\neq\pm1$, which
is satisfied if all $m_{j}=1$ and $\theta\in(-\pi,0)$. The second
assertion of the lemma is proved: for any $\epsilon\in[0,\epsilon_{0})$,
matrix $A(\theta,\epsilon)$ is invertible for $\theta\in(-\pi,0)$
if all $m_{j}=1$.

The first assertion of the lemma tells us that for any $\epsilon\in(0,\epsilon_{0})$,
matrices $A_{+}(\epsilon):=A(0,\epsilon)$ and $A_{-}(\epsilon):=A(-\pi,\epsilon)$
have a zero eigenvalue of geometric and algebraic multiplicities $N-1$.
We write $A_{\pm}(\epsilon)$ explicitly in the form \[
A_{\pm}(\epsilon)=\left[\begin{array}{cc}
-(1+p)M_{\pm} & -pM_{\pm}\\
-pN(\kappa_{\pm}) & 2\epsilon\sinh(\kappa_{\pm})I-(1+p)N(\kappa_{\pm})\end{array}\right],\]
 where $\kappa_{\pm}>0$ are uniquely defined by \[
2\epsilon(\cosh(\kappa_{+})-1)=2,\quad2\epsilon(\cosh(\kappa_{-})-1)=2+4\epsilon,\]
 whereas matrices $M_{\pm}$ are given by \[
M_{+}=\left[\begin{array}{ccccc}
1 & 1 & 1 & \cdots & 1\\
1 & 1 & 1 & \cdots & 1\\
1 & 1 & 1 & \cdots & 1\\
\vdots & \vdots & \vdots & \vdots & \vdots\\
1 & 1 & 1 & \cdots & 1\end{array}\right]\]
 and \[
M_{-}=\left[\begin{array}{ccccc}
1 & (-1)^{m_{1}} & (-1)^{m_{1}+m_{2}} & \cdots & (-1)^{m_{1}+m_{2}+...+m_{N-1}}\\
(-1)^{m_{1}} & 1 & (-1)^{m_{2}} & \cdots & (-1)^{m_{2}+...+m_{N-1}}\\
(-1)^{m_{1}+m_{2}} & (-1)^{m_{2}} & 1 & \cdots & (-1)^{m_{3}+...+m_{N-1}}\\
\vdots & \vdots & \vdots & \vdots & \vdots\\
(-1)^{m_{1}+m_{2}+...+m_{N-1}} & (-1)^{m_{2}+...+m_{N-1}} & (-1)^{m_{3}+...+m_{N-1}} & \cdots & 1\end{array}\right].\]
 It is clear that ${\rm Null}(M_{+})$ and ${\rm Null}(M_{-})$ are
$(N-1)$-dimensional.

The first $N$ rows of $A_{+}(\epsilon)$ are identical to the first
row, whereas the last $N$ rows of $A_{+}(\epsilon)$ are linearly
independent at $\epsilon=0$ and, by continuity, for small $\epsilon\in[0,\epsilon_{0})$.
Therefore, ${\rm Null}(A_{+}(\epsilon))$ is $(N-1)$-dimensional
for any $\epsilon\in[0,\epsilon_{0})$. Similarly, the second, third,
and $N$-th rows of $A_{-}(\epsilon)$ are identical to the first
row multiplied by $(-1)^{m_{1}}$, $(-1)^{m_{1}+m_{2}}$, and $(-1)^{m_{1}+m_{2}+...+m_{N-1}}$
respectively. The last $N$ rows of $A_{-}(\epsilon)$ are linearly
independent for small $\epsilon\geq0$. Therefore, ${\rm Null}(A_{-}(\epsilon))$
is $(N-1)$-dimensional for any $\epsilon\in[0,\epsilon_{0})$.

It remains to prove that the zero eigenvalue of $A_{\pm}(\epsilon)$
is not degenerate (has equal geometric and algebraic multiplicity)
for $\epsilon\in(0,\epsilon_{0})$. It is clear from the explicit
form of $A_{\pm}(0)$ and $M_{\pm}$ that \begin{equation}
u\in{\rm Null}(A_{\pm}(0))\quad\Leftrightarrow\quad u=\left[\begin{array}{c}
(1-p)w\\
pw\end{array}\right],\quad w\in{\rm Null}(M_{\pm}).\label{relation-null-space}\end{equation}
 To construct a generalized kernel, we consider the inhomogeneous
equation \[
A_{\pm}(0)\tilde{u}=u,\quad u\in{\rm Null}(A_{\pm}(0)).\]
 Then, we obtain for $w\in{\rm Null}(M_{\pm})$, \[
\tilde{u}=\left[\begin{array}{c}
(1-p)\tilde{w}-w\\
p\tilde{w}\end{array}\right],\quad M_{\pm}\tilde{w}=(p-1)w.\]
 If $p\neq1$, then no $\tilde{w}\in\C^{N}$ exists because $M_{\pm}$
is symmetric. Therefore, for $p\neq1$, the zero eigenvalue has equal
geometric and algebraic multiplicity for the matrix $A_{\pm}(0)$
and, by continuity, for the matrix $A_{\pm}(\epsilon)$ for $\epsilon\in[0,\epsilon_{0})$.

The case $p=1$ needs a separate consideration since $\tilde{w}=0$
and the zero eigenvalue of $A_{\pm}(0)$ has geometric multiplicity
$N-1$ and algebraic multiplicity $2N-2$. This case is considered
in Appendix A, where we show that the degeneracy is broken for any
$\epsilon\neq0$, so that $A_{\pm}(\epsilon)$ in the case $p=1$
still has a zero eigenvalue of equal geometric and algebraic multiplicity
$N-1$ for any $\epsilon\in(0,\epsilon_{0})$. \end{proof}

Because the coefficient matrix $A(\theta,\epsilon)$ is singular at
$\theta=0$ and $\theta=-\pi$, we shall consider the limiting behavior
of solutions of the linear system (\ref{abstract-linear-system})
near these points. The following abstract lemma gives the sufficient
condition that the unique solution $c$ of the linear system (\ref{abstract-linear-system})
for small $\theta\neq0$ and fixed $\epsilon\in(0,\epsilon_{0})$
remains bounded in the limit $\theta\to0$. Because $\epsilon$ is
fixed, we can drop this parameter from the notations of the lemma.

\begin{lemma} \label{lemma-fredholm} Assume that $A(\theta)\in\C^{2N\times2N}$
and $h(\theta)\in\C^{2N}$ are analytic in $\theta\in(-\theta_{0},\theta_{0})$
for $\theta_{0}>0$ and consider solutions of \[
A(\theta)c=h(\theta),\quad c\in\C^{2N}.\]
 Assume that $A(\theta)$ is invertible for $\theta\neq0$ and singular
for $\theta=0$ and that the zero eigenvalue of $A(0)$ has equal
geometric and algebraic multiplicity $n\leq2N$. A unique solution
$c$ for $\theta\neq0$ is bounded as $\theta\to0$ if \begin{equation}
h(0)\perp{\rm Null}(A^{*}(0))\quad{\rm and}\quad{\rm Null}(A'(0)|_{{\rm Null}(A(0))})=\{0\}.\label{condition-bounded}\end{equation}
 \end{lemma}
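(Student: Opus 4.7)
The plan is to use a Lyapunov--Schmidt decomposition adapted to the semisimplicity of the zero eigenvalue of $A(0)$, which yields the direct sum decompositions $\C^{2N}={\rm Null}(A(0))\oplus{\rm Range}(A(0))$ and, dually, $\C^{2N}={\rm Null}(A^{*}(0))\oplus{\rm Range}(A(0))$. I would introduce two projections: $P$ onto ${\rm Null}(A(0))$ along ${\rm Range}(A(0))$, and $Q$ onto ${\rm Null}(A^{*}(0))$ along ${\rm Range}(A(0))$. Then $A(0)P=0$ and $QA(0)=0$, while $(I-Q)A(0)$ restricted to ${\rm Range}(A(0))$ is invertible.

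Writing $c=c_{1}+c_{2}$ with $c_{1}:=Pc\in{\rm Null}(A(0))$ and $c_{2}:=(I-P)c\in{\rm Range}(A(0))$, I would split $A(\theta)c=h(\theta)$ into its $(I-Q)$ and $Q$ components. The $(I-Q)$ component reads $(I-Q)A(\theta)(c_{1}+c_{2})=(I-Q)h(\theta)$; its $\theta=0$ piece is $(I-Q)A(0)c_{2}=(I-Q)h(0)$, and since $(I-Q)A(0)$ is invertible on ${\rm Range}(A(0))$, the implicit function theorem (applied to the analytic family $A(\theta)$) yields $c_{2}$ analytic in $\theta$ and parameterized by $c_{1}$. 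The $Q$ component reads $QA(\theta)(c_{1}+c_{2})=Qh(\theta)$, and since $QA(0)=0$, Taylor expansion gives
\[
\theta\,QA'(0)(c_{1}+c_{2})+O(\theta^{2})=Qh(\theta).
\]
The first hypothesis $h(0)\perp{\rm Null}(A^{*}(0))$ is exactly $Qh(0)=0$, so $Qh(\theta)=\theta\,Qh'(0)+O(\theta^{2})$, and one can divide through by $\theta$. Passing to $\theta\to0$ produces the finite-dimensional equation
\[
QA'(0)c_{1}|_{\theta=0}=Qh'(0)-QA'(0)c_{2}|_{\theta=0}
\]
for $c_{1}|_{\theta=0}\in{\rm Null}(A(0))$. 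Because $QA'(0)$ sends ${\rm Null}(A(0))$ into the equidimensional space ${\rm Null}(A^{*}(0))$, injectivity is equivalent to invertibility, and the second hypothesis provides precisely this injectivity. With $c_{1}|_{\theta=0}$ uniquely fixed, an induction on orders in $\theta$ produces analytic expansions of both $c_{1}$ and $c_{2}$, proving boundedness of $c$ as $\theta\to 0$.

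The main obstacle I anticipate is reconciling the literal reading of the second hypothesis ${\rm Null}(A'(0)|_{{\rm Null}(A(0))})=\{0\}$ with the actually needed invertibility of the projected map $QA'(0)|_{{\rm Null}(A(0))}:{\rm Null}(A(0))\to{\rm Null}(A^{*}(0))$. The natural reading is to view $A'(0)|_{{\rm Null}(A(0))}$ as taking values modulo ${\rm Range}(A(0))={\rm Null}(A^{*}(0))^{\perp}$, under which the two formulations coincide and the hypothesis is precisely the Fredholm solvability condition for the first-order perturbative correction. Once that interpretation is in place, the rest is a routine Lyapunov--Schmidt manipulation using only analyticity of $A(\theta)$ and $h(\theta)$ and the semisimplicity of the zero eigenvalue.
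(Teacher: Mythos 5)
Your proposal is correct and follows essentially the same Lyapunov--Schmidt reduction as the paper: decompose $c$ along ${\rm Null}(A(0))$ and a complement, solve the regular (range) component by invertibility of $A(0)$ restricted there, use $h(0)\perp{\rm Null}(A^{*}(0))$ to divide the singular component by $\theta$, and close the system via invertibility of the projected map $A'(0)|_{{\rm Null}(A(0))}$ (which the paper's accompanying remark defines exactly as your $QA'(0)|_{{\rm Null}(A(0))}$ via the matrix $P_{ij}=\langle v_{j},A_{1}u_{i}\rangle$). The only cosmetic difference is that you use the oblique projection along ${\rm Range}(A(0))$ where the paper uses orthogonal complements and explicit biorthogonal bases; both rely on semisimplicity of the zero eigenvalue in the same way.
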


\begin{remark} We denote the Hermite conjugate of a matrix $A_{0}\in\C^{2N\times2N}$
by $A_{0}^{*}=\overline{A_{0}^{T}}$. Let \begin{equation}
{\rm Null}(A_{0})={\rm span}\{u_{1},...,u_{n}\}\quad{\rm and}\quad{\rm Null}(A_{0}^{*})={\rm span}\{v_{1},...,v_{n}\},\label{basis-1}\end{equation}
 where $\{u_{1},...,u_{n}\}$ and $\{v_{1},...,v_{n}\}$ are mutually
orthogonal bases, so that \begin{equation}
\langle u_{i},v_{j}\rangle_{\C^{2N}}=\delta_{i,j}\quad\mbox{{\rm for all}}\quad1\leq i,j\leq n.\label{basis-2}\end{equation}
 The restriction of matrix $A_{1}\in\C^{2N\times2N}$ on ${\rm Null}(A_{0})$
denoted by $A_{1}|_{{\rm Null}(A_{0})}$ can be expressed by the matrix
$P\in\C^{n\times n}$ with elements \begin{equation}
P_{ij}=\langle v_{j},A_{1}u_{i}\rangle_{\C^{2N}}\quad\mbox{{\rm for all}}\quad1\leq i,j\leq n.\label{basis-3}\end{equation}
 \end{remark}

\begin{proof} The proof of the lemma is achieved with the method
of Lyapunov--Schmidt reductions. Using analyticity of $A(\theta)$
and $h(\theta)$, let us expand \[
A(\theta)=A_{0}+\theta A_{1}+\theta^{2}\tilde{A}(\theta),\quad h(\theta)=h_{0}+\theta h_{1}+\theta^{2}\tilde{h}(\theta),\]
 where $A_{0}=A(0)$, $A_{1}=A'(0)$, $h_{0}=h(0)$, $h_{1}=h'(0)$,
and $\tilde{A}(\theta)$ and $\tilde{h}(\theta)$ are bounded as $\theta\to0$.
Given the basis for ${\rm Null}(A_{0})$ in (\ref{basis-1}), we consider
the orthogonal decomposition of the solution \begin{equation}
c=\sum_{j=1}^{n}a_{j}u_{j}+b,\quad b\perp{\rm Null}(A_{0}).\label{linear-system-0}\end{equation}
 The linear system becomes \begin{equation}
(A_{0}+\theta A_{1}+\theta^{2}\tilde{A}(\theta))b+\theta\sum_{j=1}^{n}a_{j}(A_{1}+\theta\tilde{A}(\theta))u_{j}=h_{0}+\theta h_{1}+\theta^{2}\tilde{h}(\theta).\label{linear-system-1}\end{equation}
 Projections of system (\ref{linear-system-1}) to the basis for ${\rm Null}(A_{0}^{*})$
in (\ref{basis-1}) give $n$ equations \begin{equation}
\sum_{j=1}^{n}\left(P_{ij}+\theta\tilde{P}_{ij}(\theta)\right)a_{j}+\langle v_{i},(A_{1}+\theta\tilde{A}(\theta))b\rangle_{\C^{2N}}=\langle v_{i},h_{1}+\theta\tilde{h}(\theta)\rangle_{\C^{2N}},\quad1\leq i\leq n,\label{linear-system-2}\end{equation}
 where $P_{ij}$ is given in (\ref{basis-3}), $\tilde{P}_{ij}(\theta)=\langle v_{i},\tilde{A}(\theta)u_{j}\rangle_{\C^{2N}}$
is bounded as $\theta\to0$, and we have used the condition $h_{0}\perp{\rm Null}(A_{0}^{*})$.

Let $Q:\C^{2N}\to{\rm Ran}(A_{0})\subset\C^{2N}$ and $Q^{*}:\C^{2N}\to{\rm Ran}(A_{0}^{*})\subset\C^{2N}$
be the projection operators. Recall that ${\rm Ran}(A_{0})\perp{\rm Null}(A_{0}^{*})$
and ${\rm Ran}(A_{0}^{*})\perp{\rm Null}(A_{0})$. Projection of system
(\ref{linear-system-1}) to ${\rm Ran}(A_{0})$ gives an equation
for $b$ \begin{equation}
Q(A_{0}+\theta A_{1}+\theta^{2}\tilde{A}(\theta))Q^{*}b=Q(h_{0}+\theta h_{1}+\theta^{2}\tilde{h}(\theta))-\sum_{j=1}^{n}a_{j}Q(A_{1}+\theta\tilde{A}(\theta))u_{j}.\label{linear-system-3}\end{equation}
 Because $QA_{0}Q^{*}$ is invertible, there is a unique map $\C^{n}\ni(a_{1},...,a_{n})\mapsto b\in{\rm Ran}(A_{0}^{*})$
for any $\theta\in(-\theta_{0},\theta_{0})$ such that $b$ is a solution
of system (\ref{linear-system-3}) and for any $\theta\in(-\theta_{0},\theta_{0})$,
there is $C>0$ such that \begin{equation}
\|b-Q^{*}A_{0}^{-1}Qh_{0}\|_{\C^{2N}}\leq C\theta.\label{linear-system-3a}\end{equation}

Since ${\rm Null}(A_{1}|_{{\rm Null}(A_{0})})=\{0\}$, matrix $P$
is invertible. For any $b$ from solution of system (\ref{linear-system-3})
satisfying bound (\ref{linear-system-3a}), there exists a unique
solution of system (\ref{linear-system-2}) for $(a_{1},...,a_{n})$
for any $\theta\in(-\theta_{0},\theta_{0})$ such that \begin{equation}
\exists C>0:\quad\|a-P^{-1}(I-Q)(h_{1}-A_{1}Q^{*}A_{0}^{-1}Qh_{0})\|_{\C^{n}}\leq C\theta.\label{linear-system-3b}\end{equation}

For any $\theta\neq0$, the solution of system $A(\theta)c=h(\theta)$
is unique. Therefore, the unique solution obtained from the decomposition
(\ref{linear-system-0}) for any $\theta\in(-\theta_{0},\theta_{0})$
is equivalent to the unique solution of system $A(\theta)c=h(\theta)$
for $\theta\neq0$. \end{proof}

We shall check that the conditions (\ref{condition-bounded}) of Lemma
\ref{lemma-fredholm} are satisfied for our particular matrix $A(\theta,\epsilon)$
and the right-hand-side vector $h(\theta,\epsilon)$ for both end
points $\theta=0$ and $\theta=-\pi$.

\begin{lemma} Let $h_{+}(\epsilon):=h(0,\epsilon)$ and $h_{-}(\epsilon):=h(-\pi,\epsilon)$.
For any $\epsilon\in(0,\epsilon_{0})$, it is true that \begin{equation}
h_{\pm}(\epsilon)\perp{\rm Null}(A_{\pm}^{*}(\epsilon))\quad{\rm and}\quad{\rm Null}(\partial_{\theta}A_{\pm}(\epsilon)|_{{\rm Null}(A_{\pm}(\epsilon))})=\{0\}.\label{condition-bounded-lemma}\end{equation}
 \label{lemma-condition-bounded} \end{lemma}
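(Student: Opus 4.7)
The plan is to verify the two conditions in (\ref{condition-bounded-lemma}) by explicitly identifying ${\rm Null}(A_\pm^*(\epsilon))$ and then computing the action of $\partial_\theta A_\pm(\epsilon)$ on ${\rm Null}(A_\pm(\epsilon))$, exploiting the explicit structure of $A_\pm(\epsilon)$ and $M_\pm$ already written down in the proof of Lemma \ref{lemma-A}.

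First I would determine ${\rm Null}(A_\pm^*(\epsilon))$. Using $M_+ = \mathbf{1}\mathbf{1}^T$ with $\mathbf{1}=(1,\ldots,1)^T$ and, in the simply-connected case $m_j=1$, $M_- = s s^T$ with $s_k=(-1)^{k-1}$, together with invertibility of $N(\kappa_\pm)$, a short calculation on the transposed system yields
\[
{\rm Null}(A_+^*(\epsilon)) = \{(v_1,0):\, v_1\perp\mathbf{1}\},\qquad {\rm Null}(A_-^*(\epsilon)) = \{(v_1,0):\, v_1\perp s\},
\]
both of dimension $N-1$ for every $\epsilon\in[0,\epsilon_0)$. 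Since adjoint-null vectors have vanishing second block, condition (i) in (\ref{condition-bounded-lemma}) reduces to showing that the upper block of $h_\pm(\epsilon)$ lies in ${\rm span}(\mathbf{1})$ or ${\rm span}(s)$ respectively. At $\theta=0$ that upper block equals $\{\sum_m f_m\}_{n\in U_+\cup U_-}$, a constant vector; at $\theta=-\pi$ it equals $(-1)^n\sum_m(-1)^m f_m$, a scalar multiple of $s$ precisely because $U_+\cup U_-$ consists of consecutive integers in the simply-connected case. This handles the orthogonality condition.

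For the second condition, I use that $\omega'(\theta)=2\sin\theta=0$ at $\theta\in\{0,-\pi\}$, so $\kappa'(\theta)=0$ there and the lower block of $\partial_\theta A_\pm(\epsilon)$ vanishes identically. From $M_{ij}(\theta)=e^{-i|i-j|\theta}$ one obtains $M'(0)=-iD$ with $D_{ij}=|i-j|$; acting on a null vector $u=((1-p)w;\,pw)$ with $w\in{\rm Null}(M_+)$ and simplifying via $(1+p)(1-p)+p^2=1$ gives
\[
\partial_\theta A_+(\epsilon)\,u = i\left(\,[D+2\epsilon(1-p)I]w\,;\,0\,\right) + O(\epsilon),
\]
where the remainder absorbs the $\epsilon$-correction of the null vector itself. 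Injectivity of the restricted matrix $P$ then reduces to invertibility of $D+2\epsilon(1-p)I$, projected onto $\mathbf{1}^\perp$. The analogous computation at $\theta=-\pi$ produces $\tilde D - 2\epsilon(1-p)I$ on $s^\perp$, where $\tilde D_{ij}=(-1)^{|i-j|}|i-j|$; the change of variables $\hat w_i=(-1)^{i-1}w_i$ gives $(\tilde D w)_i = s_i(D\hat w)_i$, reducing this case to the previous one.

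The main obstacle is the algebraic invertibility of $D$ on $\mathbf{1}^\perp$. I would prove it via the identity $\Delta_i|i-j|=2\delta_{i,j}$ for interior indices $i$: if $Dw=c\mathbf{1}$ with $w\in\mathbf{1}^\perp$, the discrete second difference in $i$ forces $w_i=0$ for every interior $i$, leaving the residual ansatz $w=(w_1,0,\ldots,0,w_N)$ with $w_1+w_N=0$; direct substitution gives $(Dw)_i=w_1(2i-N-1)$, which is constant in $i$ only when $w_1=0$. Continuity in $\epsilon$ then preserves invertibility of $D+2\epsilon(1-p)I$ on $\mathbf{1}^\perp$ for $p\neq 1$ and small $\epsilon$; for $p=1$ the $\epsilon$-term drops out but bare $D$ is already invertible. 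The more delicate issue of the null-space structure of $A_\pm(\epsilon)$ itself in the case $p=1$ is postponed to Appendix A.
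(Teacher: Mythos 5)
Your proof is correct and structurally parallel to the paper's: both identify ${\rm Null}(A_\pm^*(\epsilon))$ from the rank-one structure of $M_\pm$ (so that orthogonality of $h_\pm$ reduces to the first block being proportional to $\mathbf{1}$ or $s$), both observe that $\kappa'(\theta)=\sin\theta/\sinh\kappa$ kills the lower block of $\partial_\theta A_\pm$ at the endpoints, and both use the identity $(1+p)(1-p)+p^2=1$ on the limiting null vectors $((1-p)w;pw)$ to reduce the second condition to injectivity of the distance matrix compressed to ${\rm Null}(M_\pm)$, extended to $\epsilon>0$ by continuity. The one step you do genuinely differently is that final injectivity: the paper telescopes consecutive differences $(Rw)_j-(Rw)_{j+1}=m_j(w_{j+1}+\cdots+w_N-w_1-\cdots-w_j)$ and inducts to get $w_j=0$ one site at a time, whereas you use the second-difference identity $\Delta_i|i-j|=2\delta_{ij}$ to annihilate all interior components at once and then dispose of the two boundary components by hand; both are valid, and your $\theta=-\pi$ reduction via the sign conjugation $\hat w_i=(-1)^{i-1}w_i$ is a clean way to make explicit what the paper dismisses as ``similar.'' The only caveat is generality: your second-difference trick relies on the sites being equally spaced ($m_j=1$), while the paper's telescoping argument works for arbitrary gaps $m_j$ — and the paper explicitly records (Remark \ref{remark-lemma-4}) that this lemma holds without the simply-connected assumption, a fact it later uses for the non-simply-connected case study of Section 5. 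So your proof covers the case needed for the main theorem but not the full generality the paper claims for this lemma; for unequal $m_j$ the second difference of $|p_i-p_j|$ in the site index $i$ is no longer a multiple of $\delta_{ij}$ and you would have to revert to the first-difference induction.
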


\begin{proof} It is sufficient to develop the proof for $\theta=0$.
The proof for $\theta=-\pi$ is similar.

Recall that the first $N$ rows of $A(0,\epsilon)$ are identical
to the first row. Since components of $h(0,\epsilon)$ are given by
\[
\left\{ \begin{array}{l}
\sum_{m\in\Z}f_{m}\\
\sum_{m\in\Z}e^{-\kappa|n-m|}g_{m}\end{array}\right\} _{n\in U_{+}\cup U_{-}},\]
 the first $N$ entries of $h(0,\epsilon)$ are also identical so
that $h(0,\epsilon)\in{\rm Ran}(A(0,\epsilon))\perp{\rm Null}(A^{*}(0,\epsilon))$
for any $\epsilon\in(0,\epsilon_{0})$. Therefore, the first condition
(\ref{condition-bounded-lemma}) is satisfied.

Next, we compute $A_{1}(\epsilon)=\partial_{\theta}A(\theta,\epsilon)|_{\theta=0}$.
We know that \[
2\epsilon(\cosh(\kappa)-1)=2+\epsilon(2-2\cos(\theta))\quad\Rightarrow\quad\frac{d\kappa}{d\theta}=\frac{\sin(\theta)}{\sinh(\kappa)},\]
 therefore, \begin{equation}
A_{1}(\epsilon)\equiv i\left[\begin{array}{cc}
2\epsilon I+(1+p)R & pR\\
0 & 0\end{array}\right],\label{matrix-A-1}\end{equation}
 where \[
R=\left[\begin{array}{ccccc}
0 & m_{1} & m_{1}+m_{2} & \cdots & m_{1}+m_{2}+\cdots+m_{N-1}\\
m_{1} & 0 & m_{2} & \cdots & m_{2}+m_{3}+\cdots+m_{N-1}\\
m_{1}+m_{2} & m_{2} & 0 & \cdots & m_{3}+\cdots+m_{N-1}\\
\vdots & \vdots & \vdots & \vdots & \vdots\\
m_{1}+m_{2}+\cdots+m_{N-1} & m_{2}+\cdots+m_{N-1} & m_{3}+\cdots+m_{N-1} & \cdots & 0\end{array}\right].\]

Let $P(\epsilon)$ be the matrix in $\C^{(N-1)\times(N-1)}$ which
represents the restriction $A_{1}(\epsilon)|_{{\rm Null}(A_{0}(\epsilon))}$.
Existence of $a\in{\rm Null}(P(\epsilon))\subset\C^{N-1}$ is equivalent
to existence of $u\in{\rm Null}(A_{0}(\epsilon))\subset\C^{2N}$ such
that $A_{1}(\epsilon)u\in{\rm Ran}(A_{0}(\epsilon))\perp{\rm Null}(A_{0}^{*}(\epsilon))$.
In other words, we need to find $u\in{\rm Null}(A_{0}(\epsilon))$
such that the first $N$ entries of $A_{1}(\epsilon)u$ are identical
(the other $N$ entries of $A_{1}(\epsilon)u$ are zeros).

By continuity in $\epsilon\in[0,\epsilon_{0})$, the second condition
(\ref{condition-bounded-lemma}) is satisfied if it is satisfied for
$\epsilon=0$. Therefore, it is sufficient to check the existence
of $u\in{\rm Null}(A_{0}(0))$ such that the first $N$ entries of
$A_{1}(0)u$ are identical.

It follows from relations (\ref{relation-null-space}) and (\ref{matrix-A-1})
that existence of $u\in{\rm Null}(A_{0}(0))$ such that the first
$N$ entries of $A_{1}(0)u$ are identical is equivalent to the existence
of $w\in{\rm Null}(M_{+})\subset\C^{N}$ such that all entries of
$Rw$ are identical.

If $w=[w_{1},w_{2},...,w_{N}]^{T}\in{\rm Null}(M_{+})$, then \begin{equation}
w_{1}+w_{2}+...+w_{N}=0.\label{constraint-sum}\end{equation}
 Condition $(Rw)_{1}=(Rw)_{2}$ gives \[
m_{1}(w_{2}+...+w_{N})=m_{1}w_{1}.\]
 Constraint (\ref{constraint-sum}) implies that if $m_{1}\neq0$,
then $w_{1}=0$ and $w_{2}+\cdots+w_{N}=0$. Continuing by induction
for condition $(Rw)_{j}=(Rw)_{j+1}$, where $j\in\{1,2,...,N-1\}$,
we obtain that if $m_{j}\neq0$, then $w_{j}=0$ for all $j\in\{1,2,...,N-1\}$.
In view of constraint (\ref{constraint-sum}), we have $w_{N}=0$
that is $w=0\in\C^{N}$. As a result, we have proved that ${\rm Null}(A_{1}(0)|_{{\rm Null}(A_{0}(0))})=\{0\}$.
By continuity in $\epsilon\in[0,\epsilon_{0})$, ${\rm Null}(A_{1}(\epsilon)|_{{\rm Null}(A_{0}(\epsilon))})=\{0\}$
for small $\epsilon\neq0$, which gives the second condition (\ref{condition-bounded-lemma})
for $\theta=0$. \end{proof}

\begin{remark} \label{remark-lemma-4} Lemma \ref{lemma-condition-bounded}
is proved without assuming that all $m_{j}=1$. \end{remark}

\begin{proof1}\textbf{of Theorem \ref{theorem-continuous-spectrum}.}
By Lemma \ref{lemma-condition-bounded}, assumptions of Lemma \ref{lemma-fredholm}
are satisfied and the unique solution of system (\ref{abstract-linear-system})
for $\theta\in(-\pi,0)$ is continued to the unique bounded limit
$c_{0}=\lim_{\theta\to0}c$. From the first $N$ equations of system
(\ref{system-a-b}), we infer that \[
\theta=0:\quad\sum_{m\in U_{+}\cup U_{-}}((1+p)a_{m}+pb_{m})=-\sum_{m\in\Z}f_{m}.\]
 As a result, the simple pole singularity at $\theta=0$ ($z(\lambda_{+})=0$)
in the Green's function representation (\ref{resolvent-a-b}) with
the Puiseux expansion (\ref{Puiseux-free}) is canceled. Similarly,
the simple pole singularity at $\theta=-\pi$ is cancelled. On the
other hand, the representation (\ref{resolvent-a-b}) contains $\epsilon$
in the denominator, which does not cancel out generally. As a result,
Lemma \ref{lemma-A} for all $m_{j}=1$ and Lemma \ref{lemma-condition-bounded}
give that for any $\omega\in[0,4]$ and any $\epsilon\in(0,\epsilon_{0})$,
there exists $C>0$ such that \[
\|{\bf a}\|_{l^{\infty}}\leq C\epsilon^{-1}.\]
 This gives bound (\ref{bound-resolvent-l1-1}) and hence Theorem
\ref{theorem-continuous-spectrum}. \end{proof1}

\subsection{Matching conditions for the resolvent operator}

To complete the proof of Theorem \ref{theorem-resolvent}, we need
to prove that no singularities of linear system (\ref{system-a-b})
are located inside the disks $B_{\delta_{+}}(1)$ and $B_{\delta_{-}}(-1)$
for $\epsilon$-independent $\delta_{\pm}>0$. It is again sufficient
to consider the disk $B_{\delta_{+}}(1)$ because of the symmetry
in the $\Omega$-plane.

The free resolvent operator $R_{0}^{+}(\lambda):l_{\sigma}^{2}(\Z)\to l_{-\sigma}^{2}(\Z)$
with $\sigma>\frac{1}{2}$ is extended meromorphically in variable
$\theta(\lambda)$ for $\lambda\in\C^{+}\backslash[0,4]$ with simple
poles at $\theta=0$ ($\lambda=0$) and $\theta=-\pi$ ($\lambda=4$).
By Theorem \ref{theorem-continuous-spectrum}, the resolvent operator
$R_{L}^{+}(1+\epsilon\omega):l_{1}^{1}(\Z)\times l_{1}^{1}(\Z)\to l^{\infty}(\Z)\times l^{\infty}(\Z)$
is bounded for $\omega\in[0,4]$ and the pole singularities are canceled.
As a result, the resolvent operator $R_{L}^{+}(1+\epsilon\lambda)$
can be extended as a bounded operator from $l_{\sigma}^{2}(\Z)\times l_{\sigma}^{2}(\Z)$
to $l_{-\sigma}^{2}(\Z)\times l_{-\sigma}^{2}(\Z)$ with $\sigma>\frac{1}{2}$
for any $\lambda\in\C^{+}\backslash[0,4]$. We need to show that no
singularities of the resolvent operator $R_{L}(1+\epsilon\lambda)$
exist in the upper semi-annulus \[
D_{\delta_{+}}=\left\{ \lambda\in\C^{+}:\quad\gamma_{+}<|\lambda|<\delta_{+}\epsilon^{-1}\right\} \subset B_{\delta_{+}}(1),\]
 where $\gamma_{+}>4$ and $\delta_{+}\in(0,1)$. A similar analysis
can also be used to show that the resolvent operator $R_{L}^{-}(1+\epsilon\lambda)$
can be extended as a bounded operator in the lower semi-disk in $B_{\delta_{+}}(1)$.

\begin{lemma} \label{lemma-semi-annulus} For any $\epsilon\in(0,\epsilon_{0})$
and all $\lambda\in D_{\delta_{+}}$, the resolvent operator $R_{L}(1+\epsilon\lambda)$
is a bounded operator from $l^{2}(\Z)\times l^{2}(\Z)$ to $l^{2}(\Z)\times l^{2}(\Z)$.
\end{lemma}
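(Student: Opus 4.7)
The plan is to parallel Lemma \ref{lemma-first-part}: boundedness of $R_L(1+\epsilon\lambda)$ on $l^2(\Z)\times l^2(\Z)$ reduces to invertibility of the coefficient matrix $A(1+\epsilon\lambda,\epsilon)$, since the Green function factors $(2i\sin z(\lambda_\pm))^{-1}e^{-iz(\lambda_\pm)|n-m|}$ in (\ref{resolvent-a-b}) define bounded $l^2\to l^2$ convolutions whenever $\lambda_\pm\notin[0,4]$. For $\lambda\in D_{\delta_+}$, the bound $|\lambda_+|=|\lambda|>\gamma_+>4$ keeps $\lambda_+$ uniformly off the cut, while $\mathrm{Re}(\lambda_-)\leq-(2-\delta_+)\epsilon^{-1}$ is uniformly large negative for $\delta_+<2$, so both Green functions decay exponentially and uniformly in $\lambda$. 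The task therefore reduces to proving that $\tilde A(\lambda,\epsilon):=A(1+\epsilon\lambda,\epsilon)$ is invertible throughout $D_{\delta_+}$ for $\epsilon$ small.

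I would split $D_{\delta_+}=D_1\cup D_2$ with $D_1=\{\lambda\in\C^{+}:\gamma_+<|\lambda|\leq R\}$ and $D_2=\{\lambda\in\C^{+}:R<|\lambda|<\delta_+/\epsilon\}$, for a large $R$ to be chosen. On the relatively compact $D_1$, the uniform estimates $Q^-(\lambda,\epsilon)=I+O(\epsilon)$, $2\epsilon\sinh\kappa=2+O(\epsilon)$, and $2i\epsilon\sin z(\lambda)=O(\epsilon)$ give $\tilde A(\lambda,\epsilon)=\tilde A(\lambda,0)+O(\epsilon)$ with
\[
\tilde A(\lambda,0)=\left[\begin{array}{cc}-(1+p)Q^+(\lambda) & -pQ^+(\lambda)\\ -pI & (1-p)I\end{array}\right].
\]
For $p\neq 1$ (the case $p=1$ would follow the Appendix A treatment of Lemma \ref{lemma-A}), the Schur complement together with (\ref{determinant-D-N}) gives $\det\tilde A(\lambda,0)=(-1)^N\det Q^+(\lambda)=(-1)^N(1-e^{-2iz(\lambda)})^{N-1}$, which is nonzero since $\lambda\notin\{0,4\}$. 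By compactness of $\overline{D_1}$, $|\det\tilde A(\lambda,0)|$ is bounded below, so $\tilde A(\lambda,\epsilon)$ remains invertible on $D_1$ for $\epsilon$ small. On $D_2$, I would use the expansion $w:=e^{-iz(\lambda)}=-\lambda^{-1}+O(|\lambda|^{-2})$ (coming from the small-modulus root of $w+w^{-1}=2-\lambda$) to get $Q^+(\lambda)=I+O(|\lambda|^{-1})$ and $2i\epsilon\sin z(\lambda)=-\epsilon\lambda+2\epsilon+O(\epsilon|\lambda|^{-1})$; combined with the minus-block estimates this yields
\[
\tilde A(\lambda,\epsilon)=\left[\begin{array}{cc}-(1+p+\epsilon\lambda-2\epsilon)I & -pI\\ -pI & (1-p)I\end{array}\right]+O(|\lambda|^{-1}+\epsilon),
\]
whose leading block-scalar determinant equals $(-1-(1-p)(\epsilon\lambda-2\epsilon))^N$. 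Since $|\epsilon\lambda|<\delta_+$, shrinking $\delta_+$ so that $|1-p|\delta_+<1/2$ keeps the leading determinant bounded below uniformly; then fixing $R$ large absorbs the $O(R^{-1})$ correction and shrinking $\epsilon$ absorbs the $O(\epsilon)$ correction, giving invertibility on $D_2$. The two pieces together cover $D_{\delta_+}$.

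The main obstacle is the non-uniform character of $\tilde A(\lambda,\epsilon)$ as $|\lambda|$ sweeps from $O(1)$ to $\delta_+\epsilon^{-1}$: on $D_1$ the term $2i\epsilon\sin z(\lambda)$ acts as a small perturbation of $-(1+p)Q^+(\lambda)$, whereas on $D_2$ the same term contributes $-\epsilon\lambda$ of order one and must be retained in the leading matrix. The two asymptotic regimes match at $|\lambda|=R$ because both give $\det\tilde A\to(-1)^N$ in the overlap where $\epsilon$ and $\epsilon\lambda$ are simultaneously small, so the invertibility bounds can be glued. Once $\tilde A$ is invertible throughout $D_{\delta_+}$, Cramer's rule solves the finite system (\ref{system-a-b}) with bounded norm, and the Green function representation (\ref{resolvent-a-b}) extends this to a bounded map $l^2(\Z)\times l^2(\Z)\to l^2(\Z)\times l^2(\Z)$ via the exponential decay established at the outset.
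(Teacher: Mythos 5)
Your overall strategy is exactly the paper's: reduce boundedness of $R_{L}(1+\epsilon\lambda)$ on $D_{\delta_{+}}$ to invertibility of the coefficient matrix of (\ref{system-a-b}), and treat the regimes $|\lambda|=O(1)$ and $|\lambda|=O(\epsilon^{-1})$ by two limiting matrices that agree in the overlap (the paper parameterizes the same split by $|\lambda|={\cal O}(\epsilon^{-r})$ with $r\in[0,1)$ and $r\in(0,1]$). Your $D_{1}$ analysis is fine; in fact the Schur identity $\det\tilde{A}(\lambda,0)=(-1)^{N}\det Q^{+}(\lambda)$ holds for every $p\in\N$, including $p=1$ (use the Schur complement with respect to the invertible upper-left block), so no appeal to Appendix A is needed there: $|e^{-iz(\lambda)}|<1$ off the cut $[0,4]$ already forces $\det Q^{+}(\lambda)\neq0$.

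The $D_{2}$ computation, however, contains a genuine error. From $2\epsilon(\cosh\kappa-1)=2+\epsilon\lambda$ one gets $2\epsilon\sinh\kappa=2+\epsilon\lambda+2\epsilon+O(\epsilon^{2})$, so the lower-right block of the coefficient matrix is $(1+\epsilon\lambda-p)I+O(\epsilon)$, not $(1-p)I+O(|\lambda|^{-1}+\epsilon)$: the term $\epsilon\lambda$ has modulus up to $\delta_{+}$ on $D_{2}$ and cannot be absorbed into the error term. Your displayed leading matrix therefore differs from the correct one --- which is the paper's limiting matrix (\ref{limiting-matrix-2}) --- in that block, and the leading determinant is not $(-1-(1-p)(\epsilon\lambda-2\epsilon))^{N}$ but $\bigl(-(1+\epsilon\lambda)^{2}\bigr)^{N}$ up to small corrections. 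Consequently the condition you derive, $|1-p|\delta_{+}<1/2$, is not the right one; the correct, $p$-independent requirement is simply $\epsilon\lambda\neq-1$, i.e.\ $\delta_{+}<1$, exactly as in the paper. The conclusion of the lemma survives once the block is corrected, since $|\epsilon\lambda|<\delta_{+}<1$ keeps the true leading determinant bounded away from zero, and your gluing of the two regimes at $|\lambda|=R$ then goes through unchanged.
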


\begin{proof} Since the continuous spectrum does not touch boundaries
of $D_{\delta_{+}}$, the statement is true if and only if there exists
a unique solution of linear system (\ref{system-a-b}).

Let us denote $z(\lambda_{+})=z(\lambda)$ and $z(\lambda_{-})=-i\kappa(\lambda)$,
where $z(\lambda)$ is found from the transcendental equation (\ref{transcendental-equation-1})
and $\kappa(\lambda)$ with ${\rm Re}(\kappa(\lambda))>0$ admits
the asymptotic expansion for $\lambda\in D_{\delta_{+}}$ \[
e^{\kappa(\lambda)}=\frac{2+\epsilon\lambda}{\epsilon}+2-\frac{\epsilon}{2+\epsilon\lambda}+{\cal O}(\epsilon^{2})\quad\mbox{{\rm as}}\quad\epsilon\to0.\]
 As earlier, we denote $q_{j}^{+}=e^{-im_{j}z(\lambda)}$ and $q_{j}^{-}=e^{-m_{j}\kappa(\lambda)}$
for $j\in\{1,2,...,N-1\}$.

We write the coefficient matrix (\ref{coefficient-matrix-1}) for
$\Omega=1+\epsilon\lambda$ in the form \begin{equation}
A(\lambda,\epsilon)\equiv\left[\begin{array}{cc}
-\epsilon\sqrt{\lambda(\lambda-4)}I-(1+p)M(\lambda) & -pM(\lambda)\\
-pN(\kappa) & \sqrt{(2+\epsilon\lambda)^{2}+4\epsilon(2+\epsilon\lambda)}I-(1+p)N(\kappa)\end{array}\right],\label{coefficient-matrix-3}\end{equation}
 where $M(\lambda)\equiv Q(q_{1}^{+},q_{2}^{+},\cdots,q_{N-1}^{+})$,
$N(\kappa(\lambda))\equiv Q(q_{1}^{-},q_{2}^{-},\cdots,q_{N-1}^{-})$,
and the appropriate branches of $\sin z(\lambda)$ and $\sinh(\kappa(\lambda))$
are chosen in the domain $D_{\delta_{+}}$.

Let $|\lambda|={\cal O}(\epsilon^{-r})$ as $\epsilon\to0$ for $r\in[0,1)$.
Then, we have \begin{equation}
A(\lambda,\epsilon)\to\left[\begin{array}{cc}
-(1+p)M(\lambda) & -pM(\lambda)\\
-pI & (1-p)I\end{array}\right]\quad\mbox{{\rm as}}\quad\epsilon\to0,\label{limiting-matrix-1}\end{equation}
 where $M(\lambda)\to I$ as $\epsilon\to0$ if $r\in(0,1)$ and $M(\lambda)\nrightarrow I$
as $\epsilon\to0$ if $r=0$. The limiting matrix (\ref{limiting-matrix-1})
is not singular if $\gamma_{+}>4$. Hence $A(\lambda,\epsilon)$ is
not singular for small $\epsilon\geq0$ if $|\lambda|={\cal O}(\epsilon^{-r})$
with $r\in[0,1)$.

Let $|\lambda|={\cal O}(\epsilon^{-r})$ as $\epsilon\to0$ for $r\in(0,1]$.
Then, we have \begin{equation}
A(\lambda,\epsilon)\to\left[\begin{array}{cc}
-(1+\epsilon\lambda+p)I & -pI\\
-pI & (1+\epsilon\lambda-p)I\end{array}\right]\quad\mbox{{\rm as}}\quad\epsilon\to0.\label{limiting-matrix-2}\end{equation}
 Again, the limiting matrix is not singular if $\epsilon\lambda\neq-1$
(that is $\delta_{+}<1$) and hence $A(\lambda,\epsilon)$ is not
singular for small $\epsilon\geq0$ if $|\lambda|={\cal O}(\epsilon^{-r})$
with $r\in(0,1]$.

Since the above asymptotic scaling overlap at $r\in(0,1)$, the matrix
$A(\lambda,\epsilon)$ is not singular in the domain $D_{\delta_{+}}$
for small $\epsilon>0$. \end{proof}

Theorem \ref{theorem-resolvent} is proved with Lemma \ref{lemma-first-part},
Theorem \ref{theorem-continuous-spectrum}, and Lemma \ref{lemma-semi-annulus}.

\section{Perturbation arguments for the full resolvent}

Let us now consider the full spectral problem (\ref{spectral-stability}).
Thanks to Proposition \ref{proposition-existence} and expansion (\ref{power-series}),
we can represent $\phi_{n}^{2p}$ by \[
\phi_{n}^{2p}=\sum_{m\in U_{+}\cup U_{-}}\delta_{n,m}(1+\epsilon\chi_{m})+\epsilon^{2}W_{n},\]
 where $\{\chi_{m}\}_{m\in U_{+}\cup U_{-}}$ is a set of numerical
coefficients and $\{W_{n}\}_{n\in\Z}\in l^{2}(\Z)$ is a new potential
such that $\|{\bf W}\|_{l^{2}}={\cal O}(1)$ as $\epsilon\to0$.

In variables $\{(a_{n},b_{n})\}_{n\in\Z}$, the resolvent problem
can be rewritten in the operator form \begin{equation}
(\tilde{L}+\epsilon^{2}\tilde{W})\left[\begin{array}{cc}
{\bf a}\\
{\bf b}\end{array}\right]-\Omega\left[\begin{array}{cc}
{\bf a}\\
{\bf b}\end{array}\right]=\left[\begin{array}{cc}
{\bf f}\\
-{\bf g}\end{array}\right],\label{equation-full-resolvent}\end{equation}
 where \[
\tilde{L}=\left[\begin{array}{cc}
-\epsilon\Delta+I-(1+p)\tilde{V} & -p\tilde{V}\\
p\tilde{V} & \epsilon\Delta-I+(1+p)\tilde{V},\end{array}\right],\quad\tilde{W}=\left[\begin{array}{cc}
-(1+p)W & -pW\\
pW & (1+p)W,\end{array}\right],\]
 and $\tilde{V}$ is the associated compact potential such that \[
(\tilde{V}u)_{n}=\sum_{m\in U_{+}\cup U_{-}}\delta_{n,m}(1+\epsilon\chi_{m})u_{m},\quad n\in\Z.\]

Let us denote the solution of the inhomogeneous system (\ref{equation-full-resolvent})
by \begin{equation}
\left[\begin{array}{cc}
{\bf a}\\
{\bf b}\end{array}\right]=R(\Omega)\left[\begin{array}{cc}
{\bf f}\\
-{\bf g}\end{array}\right],\label{equation-full-resolvent-2}\end{equation}
 where $R(\Omega)$ is the resolvent operator of the full spectral
problem (\ref{spectral-stability}). The following theorem represents
the main result of our paper.

\begin{theorem} Fix $U_{+},U_{-}\subset\Z$ such that $U_{+}\cap U_{-}=\varnothing$,
$N:=|U_{+}|+|U_{-}|<\infty$, and $U_{+}\cup U_{-}$ is simply connected.
For any integer $p\geq2$, there are $\epsilon_{0}>0$ and $\delta>0$
such that for any fixed $\epsilon\in(0,\epsilon_{0})$ the resolvent
operator \[
R(\Omega):l^{2}(\Z)\times l^{2}(\Z)\to l^{2}(\Z)\times l^{2}(\Z)\]
 is bounded for any $\Omega\notin B_{\delta}(0)\cup[1,1+4\epsilon]\cup[-1-4\epsilon,-1]$.
Moreover, $R(\Omega)$ has exactly $2N$ poles (counting multiplicities)
inside $B_{\delta}(0)$ and admits the limits \[
R^{\pm}(\Omega):=\lim_{\mu\downarrow0}R(\Omega\pm i\mu)\]
 such that for any $\Omega\in[1,1+4\epsilon]\cup[-1-4\epsilon,-1]$
and any $\epsilon\in(0,\epsilon_{0})$, there is $C>0$ such that
\[
\|R^{\pm}(\Omega)\|_{l_{1}^{1}\times l_{1}^{1}\to l^{\infty}\times l^{\infty}}\leq C\epsilon^{-1}.\]
 \label{theorem-main} \end{theorem}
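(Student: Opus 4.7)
The approach is the second resolvent identity
\[
R(\Omega) = R_L(\Omega)\bigl[I + B(\epsilon) R_L(\Omega)\bigr]^{-1},
\qquad
B(\epsilon) := \tilde L + \epsilon^2 \tilde W - L,
\]
combined with a splitting $B(\epsilon) = B_{\mathrm{fin}}(\epsilon) + B_{\mathrm{tail}}(\epsilon)$ into a piece supported on $U_+\cup U_-$ and a tail supported on $\Z\setminus(U_+\cup U_-)$. The part $\tilde L - L$ is finite-rank of size $O(\epsilon)$ since $\tilde V - V$ is a diagonal correction supported on $U_+\cup U_-$. For $\epsilon^2\tilde W$, the leading-order expansion $\phi_n=O(\epsilon)$ on $\Z\setminus(U_+\cup U_-)$ (from Proposition~\ref{proposition-existence} and (\ref{soliton-decay})) gives $W_n=O(\epsilon^{2p-2})$ off the support, so
\[
\|B_{\mathrm{tail}}(\epsilon)\|_{l^\infty\times l^\infty\to l_1^1\times l_1^1}\leq C\epsilon^{2p}.
\]
The hypothesis $p\geq 2$ enters here and in the semi-simple structure of $A_\pm(0)$ used below; the cubic case has the Jordan-block degeneracy noted at the end of the proof of Lemma~\ref{lemma-A} and requires the separate analysis of Appendix~A.

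The first step would be to verify that the entire machinery of Section~3 applies verbatim to the modified operator $L+B_{\mathrm{fin}}$. Adding $B_{\mathrm{fin}}$ modifies the coefficient matrix $A(\Omega,\epsilon)$ in (\ref{coefficient-matrix-1}) by an analytic $O(\epsilon)$ correction vanishing at $\epsilon=0$, so the limiting matrix $A(\Omega,0)$ in (\ref{matrix-A-zero}) is unchanged. For $p\geq 2$ the zero eigenvalue of $A_\pm(0)$ remains semi-simple with equal geometric and algebraic multiplicity $N-1$, and $B_{\mathrm{fin}}$ preserves the row-identity structure of $A_\pm(\epsilon)$ and of the inhomogeneous vector $h_\pm(\epsilon)$ used in Lemma~\ref{lemma-condition-bounded} (cf.~Remark~\ref{remark-lemma-4}). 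Hence Lemmas~\ref{lemma-first-part}, \ref{lemma-A}, \ref{lemma-condition-bounded}, and~\ref{lemma-semi-annulus} all transfer to $L+B_{\mathrm{fin}}$ by analyticity in $\epsilon$, yielding
\[
\|R_{L+B_{\mathrm{fin}}}^{\pm}(\Omega)\|_{l_1^1\times l_1^1\to l^\infty\times l^\infty}\leq C\epsilon^{-1}
\]
on $\sigma_c(L)$ and uniform $l^2\times l^2$ bounds on compact subsets of $\C\setminus(B_\delta(0)\cup\sigma_c(L))$.

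The second step treats $B_{\mathrm{tail}}$ as a small perturbation of $L+B_{\mathrm{fin}}$. Combining the bounds above gives
\[
\bigl\|R_{L+B_{\mathrm{fin}}}^{\pm}(\Omega)\,B_{\mathrm{tail}}(\epsilon)\bigr\|_{l^\infty\times l^\infty\to l^\infty\times l^\infty}\leq C\epsilon^{2p-1}=O(\epsilon^{3})
\]
for $p\geq 2$, so the Neumann series for $[I+R_{L+B_{\mathrm{fin}}}^{\pm}B_{\mathrm{tail}}]^{-1}$ converges and
\[
R^{\pm}(\Omega) = \bigl[I+R_{L+B_{\mathrm{fin}}}^{\pm}(\Omega) B_{\mathrm{tail}}(\epsilon)\bigr]^{-1} R_{L+B_{\mathrm{fin}}}^{\pm}(\Omega)
\]
inherits the bound $C\epsilon^{-1}$. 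Off the continuous spectrum, $R_{L+B_{\mathrm{fin}}}(\Omega)$ is uniformly $l^2\times l^2$-bounded by Step~1 and $\|B_{\mathrm{tail}}\|_{l^2\to l^2}=O(\epsilon^{2p})$, so the Neumann series converges again. The pole count inside $B_\delta(0)$ follows by analytic perturbation theory and the argument principle on $\partial B_\delta(0)$: at $\epsilon=0$ the full linearization coincides with $L$, and $R-R_L$ depends analytically on $\epsilon$, so the $2N$ poles of $R_L$ persist as exactly $2N$ poles of $R$ counted with multiplicity.

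The main obstacle is the first step: confirming that the structural hypotheses of Lemmas~\ref{lemma-A} and~\ref{lemma-condition-bounded} survive the incorporation of $B_{\mathrm{fin}}$, especially the non-degeneracy ${\rm Null}(\partial_\theta A|_{{\rm Null}(A)})=\{0\}$ in (\ref{condition-bounded-lemma}) for the perturbed matrix. Because $B_{\mathrm{fin}}$ enters as an analytic correction vanishing at $\epsilon=0$ and leaves the row-identity structure at $\theta=0,-\pi$ intact, both conditions extend to the perturbed problem for $p\geq 2$; the restriction to $p\geq 2$ is essential because the $p=1$ case loses the semi-simplicity hypothesis of Lemma~\ref{lemma-fredholm} at $\epsilon=0$, which is exactly what forces the finer perturbation expansion of Appendix~A.
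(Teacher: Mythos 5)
Your proposal is correct and follows essentially the same route as the paper: transfer the Section~3 lemmas to the finite-rank perturbed operator by observing that the on-support correction is an analytic $O(\epsilon)$ modification of the coefficient matrix leaving its $\epsilon=0$ limit unchanged (so semi-simplicity and the conditions of Lemma~\ref{lemma-condition-bounded} persist for $p\geq 2$), then absorb the remainder by a resolvent identity and Neumann series using the $C\epsilon^{-1}$ bound. The only cosmetic difference is where you draw the line -- you place all on-support corrections into the coefficient-matrix analysis and perturb only by the off-support tail of size $O(\epsilon^{2p})$, whereas the paper keeps only the $O(\epsilon)$ on-support term in $\tilde V$ and perturbs by the full $\epsilon^{2}\tilde W$, needing only the cruder estimate $\|\epsilon^{2}\tilde W R_{\tilde L}\|\leq C\epsilon$.
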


\begin{proof} Let $R_{\tilde{L}}(\Omega)$ be the resolvent operator
for the inverse operator $(\tilde{L}-\Omega I)^{-1}$ associated with
the compactly supported potential $\tilde{V}$. We shall prove that
Theorem \ref{theorem-resolvent} remains valid for the resolvent operator
$R_{\tilde{L}}(\Omega)$. Assuming it, the rest of the proof of Theorem
\ref{theorem-main} relies on the perturbation arguments and the resolvent
identities \[
R(\Omega)=R_{\tilde{L}}(\Omega)(I+\epsilon^{2}\tilde{W}R_{\tilde{L}}(\Omega))^{-1}=(I+\epsilon^{2}R_{\tilde{L}}(\Omega)\tilde{W})^{-1}R_{\tilde{L}}(\Omega).\]
 Indeed, outside the continuous spectrum located at \[
\sigma_{c}(\tilde{L}+\epsilon^{2}\tilde{W})=\sigma_{c}(\tilde{L})=\sigma_{c}(L)\equiv[-1-4\epsilon,-1]\cup[1,1+4\epsilon],\]
 the resolvent operator $R_{\tilde{L}}(\Omega)$ is only singular
inside the disk $B_{\delta_{0}}(0)$, where perturbation theory of
isolated eigenvalues apply. Inside the continuous spectrum, $R_{\tilde{L}}(\Omega)$
is extended as a bounded operator from $l_{1}^{1}(\Z)\times l^{1}(\Z)$
to $l^{\infty}(\Z)\times l^{\infty}(\Z)$ such that for any $\Omega\in[1,1+4\epsilon]$
and any $\epsilon\in(0,\epsilon_{0})$, there is $C>0$ such that
\begin{equation}
\exists C>0:\quad\|R_{\tilde{L}}^{\pm}(\Omega)\|_{l_{1}^{1}\times l_{1}^{1}\to l^{\infty}\times l^{\infty}}\leq C\epsilon^{-1}.\label{bound-above}\end{equation}
 Since $\tilde{W}$ is a bounded ($\Omega$,$\epsilon$)-independent
operator from $l^{\infty}(\Z)\times l^{\infty}(\Z)$ to $l_{1}^{1}(\Z)\times l_{1}^{1}(\Z)$
(note here that $\mbox{\boldmath\ensuremath{\phi}}\in l_{1/2}^{2}(\Z)$,
see Remark \ref{remark-existence}), bound (\ref{bound-above}) implies
that \[
\exists C>0:\quad\|\epsilon^{2}\tilde{W}R_{\tilde{L}}(\Omega)\|_{l_{1}^{1}\times l_{1}^{1}\to l_{1}^{1}\times l_{1}^{1}}\leq C\epsilon,\]
 so that $(I+\epsilon^{2}\tilde{W}R_{\tilde{L}}(\Omega))$ is an invertible
bounded operator from $l_{1}^{1}(\Z)\times l_{1}^{1}(\Z)$ to $l_{1}^{1}(\Z)\times l_{1}^{1}(\Z)$
for small $\epsilon>0$.

We only need to extend Theorem \ref{theorem-resolvent} to the resolvent
operator $R_{\tilde{L}}(\Omega)$. The Green's function representation
(\ref{resolvent-a-b}) and the linear system (\ref{system-a-b}) are
now written with the factor $(1+\epsilon\chi_{m})$ in the sum over
$m\in U_{+}\cup U_{-}$. This implies that the coefficient matrix
$A(\Omega,\epsilon)$ is now written as \[
\tilde{A}(\Omega,\epsilon):=\left[\begin{array}{cc}
2i\epsilon\sin z(\lambda_{+})I-(1+p)Q^{+}(\Omega,\epsilon)(I+\epsilon D) & -pQ^{+}(\Omega,\epsilon)(I+\epsilon D)\\
-pQ^{-}(\Omega,\epsilon)(I+\epsilon D) & 2i\epsilon\sin z(\lambda_{-})I-(1+p)Q^{-}(\Omega,\epsilon)(I+\epsilon D)\end{array}\right],\]
 where $D$ is a diagonal matrix of elements $\{\chi_{m}\}_{m\in U_{+}\cup U_{-}}$.
If $p\geq2$, Lemmas \ref{lemma-first-part}, \ref{lemma-A}, \ref{lemma-condition-bounded},
and \ref{lemma-semi-annulus} remain valid as these lemmas were proved
from the limit $\epsilon=0$ (perturbation theory of Appendix A is
only required for $p=1$), where $\tilde{A}(\Omega,0)=A(\Omega,0)$.
Therefore, Theorem \ref{theorem-resolvent} holds for the resolvent
operator $R_{\tilde{L}}(\Omega)$ if $p\geq2$. \end{proof}

\begin{corollary} The result of Theorem \ref{theorem-main} holds
for $p=1$ if $N=1$. \end{corollary}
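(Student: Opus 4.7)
The plan is to show that the only step in the proof of Theorem \ref{theorem-main} that breaks for $p=1$---namely the use of Appendix A to resolve a degenerate zero eigenvalue of $A_{\pm}(\epsilon)$ at the endpoints $\theta=0,-\pi$ of the continuous spectrum---simply does not occur when $N=1$. Once that observation is in place, every lemma used in the proofs of Theorems \ref{theorem-resolvent} and \ref{theorem-main} goes through verbatim.

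First I would note that for $N=1$ the set $U_+\cup U_-$ is a single site, the matrices $Q^{\pm}(\Omega,\epsilon)$ and $M_{\pm}$ reduce to the scalar $1$, and the coefficient matrix $A(\Omega,\epsilon)$ from (\ref{coefficient-matrix-1}) is a plain $2\times 2$ matrix. The statement of Lemma \ref{lemma-A} about an $(N-1)$-dimensional kernel of $A_{\pm}(\epsilon)$ becomes vacuous. A direct computation at $\epsilon=0$ gives
\[
A(\theta,0)=\begin{bmatrix}-(1+p)&-p\\-p&1-p\end{bmatrix},\qquad \det A(\theta,0)=(1+p)(p-1)-p^2=-1,
\]
for any $p\in\N$ (in particular $p=1$). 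Hence $A(\theta,0)$ is invertible for every $\theta\in[-\pi,0]$ and, by analyticity in $\epsilon$, so is $A(\theta,\epsilon)$ for all $\epsilon\in[0,\epsilon_0)$. Thus no endpoint singularity appears, the Lyapunov--Schmidt reduction of Lemma \ref{lemma-fredholm} is not required, and Lemma \ref{lemma-condition-bounded} is trivial.

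Next I would run through the remaining pieces: Lemma \ref{lemma-first-part} yields $\det A(\Omega,0)=(-\Omega^2)^N+{\cal O}(\epsilon)=-\Omega^2+{\cal O}(\epsilon)$, giving exactly $2N=2$ poles of $R_L(\Omega)$ inside $B_\delta(0)$; Lemma \ref{lemma-semi-annulus} is insensitive to $p$; and the $\epsilon^{-1}$ bound on the continuous spectrum comes from the prefactor $(2i\epsilon\sin z(\lambda_\pm))^{-1}$ in the representation (\ref{resolvent-a-b}) combined with the uniform boundedness of the solution to the now-invertible $2\times 2$ system (\ref{system-a-b}). Hence Theorem \ref{theorem-resolvent} holds for $p=1$, $N=1$. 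Then, exactly as in the proof of Theorem \ref{theorem-main}, I would pass to the perturbed matrix $\tilde A(\Omega,\epsilon)$: since $\tilde A(\Omega,0)=A(\Omega,0)$ is invertible throughout $[-\pi,0]$, the $\epsilon$-small perturbation by $\epsilon D$ preserves invertibility, and the $\epsilon^{-1}$ bound persists. Finally the resolvent identity $R(\Omega)=R_{\tilde L}(\Omega)(I+\epsilon^2\tilde W R_{\tilde L}(\Omega))^{-1}$ together with the $\mathcal{O}(\epsilon)$ norm of $\epsilon^2\tilde W R_{\tilde L}(\Omega)$ on $l^1_1(\Z)\times l^1_1(\Z)$ closes the argument.

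The one place where a thought experiment is needed is checking that extending to $\tilde A$ really is harmless. The main obstacle for general $N\geq 2$ at $p=1$ is precisely that the perturbation analysis of Appendix A unfolds the degenerate zero eigenvalue of $A_{\pm}(\epsilon)$ using only the intrinsic $\epsilon$-dependence of $A$; adding the extra diagonal perturbation $\epsilon D$ present in $\tilde A$ would alter the leading balance and require a separate Lyapunov--Schmidt computation that the paper has not carried out. For $N=1$ this degeneracy is absent ab initio, so the passage from $A$ to $\tilde A$ is a routine regular perturbation of a nonsingular $2\times 2$ matrix, and the corollary follows.
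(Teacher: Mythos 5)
Your proof is correct and follows essentially the same route as the paper, whose entire argument for this corollary is to write down the $2\times2$ matrix $\tilde A(\Omega,\epsilon)$ for $N=1$ and observe that it is singular only inside $B_{\delta}(0)$ --- precisely your point that the endpoint degeneracy handled in Appendix A is vacuous when $N-1=0$. The only detail worth keeping explicit is that the $\epsilon^{-1}$ bound at $\theta=0,-\pi$ still relies on the pole cancellation coming from the first $N$ equations of (\ref{system-a-b}) (as in the proof of Theorem \ref{theorem-continuous-spectrum}), not merely on the uniform invertibility of the $2\times2$ system, since $(2i\epsilon\sin\theta)^{-1}$ is not $O(\epsilon^{-1})$ at the endpoints; as you import that machinery verbatim, this is covered.
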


\begin{proof} If $N=1$ (which is the case of fundamental discrete
soliton), the $2\times2$ coefficient matrix \[
\tilde{A}(\Omega,\epsilon)=\left[\begin{array}{cc}
2i\epsilon\sin z(\lambda_{+})-(1+p)(1+\epsilon\chi_{0}) & -p(1+\epsilon\chi_{0})\\
-p(1+\epsilon\chi_{0}) & 2i\epsilon\sin z(\lambda_{-})-(1+p)(1+\epsilon\chi_{0})\end{array}\right],\]
 is only singular in $B_{\delta}(0)$ for small $\epsilon>0$, where
a double pole of $R_{\tilde{L}}(\Omega)$ and $R(\Omega)$ resides.
\end{proof}

Unfortunately, in the cubic case $p=1$, we can not generally extend
the result of Theorem \ref{theorem-main} to multi-site discrete solitons
with $N\geq2$ because the perturbation theory for $\tilde{A}(\Omega,\epsilon)$
near the end points of the continuous spectrum $\Omega=\pm1$ and
$\Omega=\pm(1+4\epsilon)$ draws no conclusion in a general case.
For instance, reworking the perturbative arguments of Appendix A,
we obtain the necessary condition for ${\rm Null}(A_{\pm}(\epsilon))^{2}>{\rm Null}(A_{\pm}(\epsilon))$
in the form \[
\epsilon(2I-J-2D)w+{\cal O}(\epsilon^{2})\perp w\in{\rm Null}(M_{+}),\]
 where $I$ is the identity matrix in $\R^{N}$, $J$ is the two-diagonal
matrix (\ref{matrix-J}) from Appendix A, and $D$ is a diagonal matrix
of $\{\chi_{m}\}_{m\in U_{+}\cup U_{-}}$. Because $(2I-J-2D)$ is
no longer positive definite, the degenerate cases with ${\rm Null}(A_{\pm}(\epsilon))^{2}>{\rm Null}(A_{\pm}(\epsilon))$
are possible.

To illustrate this possibility, we set $N=3$ and consider three distinct
simply-connected discrete solitons associated with the sets \[
{\rm (a)}\; U_{+}=\{0,1,2\};\quad{\rm (b)}\; U_{+}=\{0,1\},\;\; U_{-}\{2\};\quad{\rm (c)}\; U_{+}=\{0,2\},\;\; U_{-}\{1\}.\]
 Computations of the power expansions (\ref{power-series}) give \[
{\rm (a)}\;\chi_{m}=\left\{ \begin{array}{c}
1,\quad m=0,\\
0,\quad m=1,\\
1,\quad m=2,\end{array}\right.\quad{\rm (b)}\;\chi_{m}=\left\{ \begin{array}{c}
1,\quad m=0,\\
2,\quad m=1,\\
3,\quad m=2,\end{array}\right.\quad{\rm (c)}\;\chi_{m}=\left\{ \begin{array}{c}
3,\quad m=0,\\
4,\quad m=1,\\
3,\quad m=2.\end{array}\right.\]
 As a result, matrix $C\equiv2I-J-2D$ is obtained in the form \[
{\rm (a)}\; C=\left[\begin{array}{ccc}
0 & -1 & 0\\
-1 & 2 & -1\\
0 & -1 & 0\end{array}\right],\quad{\rm (b)}\; C=\left[\begin{array}{ccc}
0 & -1 & 0\\
-1 & -2 & -1\\
0 & -1 & -4\end{array}\right],\quad{\rm (c)}\; C=\left[\begin{array}{ccc}
-4 & -1 & 0\\
-1 & -6 & -1\\
0 & -1 & -4\end{array}\right].\]
 We have \[
{\rm Null}(M_{+})={\rm span}\{w_{1},w_{2}\},\quad w_{1}=\frac{1}{\sqrt{2}}\left[\begin{array}{c}
1\\
0\\
-1\end{array}\right],\quad w_{2}=\frac{1}{\sqrt{6}}\left[\begin{array}{c}
1\\
-2\\
1\end{array}\right],\]
 from which we compute the matrix of projections $P_{ij}=\langle Cw_{i},w_{j}\rangle_{\C^{3}}$
in the form \[
{\rm (a)}\; P=\left[\begin{array}{cc}
0 & 0\\
0 & \frac{8}{3}\end{array}\right],\quad{\rm (b)}\; P=\left[\begin{array}{cc}
-2 & \frac{2}{\sqrt{3}}\\
\frac{2}{\sqrt{3}} & -\frac{2}{3}\end{array}\right],\quad{\rm (c)}\; P=\left[\begin{array}{cc}
-4 & 0\\
0 & -4\end{array}\right].\]
 The projection matrices in cases (a) and (b) are singular. In order
to show that ${\rm Null}(A_{\pm}(\epsilon))^{2}={\rm Null}(A_{\pm}(\epsilon))$
for $\epsilon\in(0,\epsilon_{0})$, we need to extend perturbation
arguments of Appendix A to the order ${\cal O}(\epsilon^{2})$. Although
it is quite possible that the non-degeneracy condition ${\rm Null}(A_{\pm}(\epsilon))^{2}={\rm Null}(A_{\pm}(\epsilon))$
is still satisfied for simply-connected multi-site discrete solitons
for $p=1$, we do not include computations of the higher-order perturbation
theory in this paper.

\section{Case study for a non-simply-connected two-site soliton}

We explain now why the resolvent operator associated with non-simply-connected
multi-site discrete solitons have singularities near the anti-continuum
limit. These singularities appear in Lemma \ref{lemma-A} because
the determinant $D_{N}(q_{1},q_{2},\cdots,q_{N-1})$ given by \ref{determinant-D-N}
has zeros for $\theta\in(-\pi,0)$.

Let us consider a case study of a two-site soliton with $n_{1}=0$
and $n_{2}=m\geq2$. For clarity of presentation, we only consider
$p\geq2$. The power series expansions (\ref{power-series}) give
\begin{equation}
m\geq3:\quad\phi_{n}^{2p}=(\delta_{n,0}+\delta_{n,m})\left(1+2\epsilon-2\epsilon^{2}\right)+\epsilon^{3}W_{n},\quad n\in\Z,\label{m-greater-2}\end{equation}
 and \begin{equation}
m=2:\quad\phi_{n}^{2p}=(\delta_{n,0}+\delta_{n,m})\left(1+2\epsilon-3\epsilon^{2}\right)+\epsilon^{3}W_{n},\quad n\in\Z,\label{m-equal-2}\end{equation}
 where $\{W_{n}\}_{n\in\Z}\in l^{2}(\Z)$ is a new potential such
that $\|{\bf W}\|_{l^{2}}={\cal O}(1)$ as $\epsilon\to0$.

Let us consider the coefficient matrix $A(\theta,\epsilon)$ at the
continuous spectrum $[1,1+4\epsilon]$ defined by (\ref{coefficient-matrix-2}).
We have explicitly \[
M(\theta)=\left[\begin{array}{cc}
1 & e^{-im\theta}\\
e^{-im\theta} & 1\end{array}\right],\quad N(\kappa)=\left[\begin{array}{cc}
1 & e^{-2\kappa}\\
e^{-2\kappa} & 1\end{array}\right].\]
 Note that ${\rm det}M(\theta)=1-e^{-2im\theta}$. Besides the end
points $\theta=-\pi$ and $\theta=0$, the matrix $M(\theta)$ (and,
therefore, the limiting matrix $A(\theta,0)$) is singular at the
intermediate points $\theta_{j}=-\frac{\pi j}{m}$ for $j=1,2,...,m-1$.

If $m=2$, there is only one intermediate-point singularity of $A(\theta,0)$
at $\theta=-\frac{\pi}{2}$. We have ${\rm dim}{\rm Null}A(-\frac{\pi}{2},0)=1$
and \[
{\rm Null}A^{*}\left(-\frac{\pi}{2},0\right)={\rm span}\left\{ e_{1}\right\} ,\quad e_{1}=\left[\begin{array}{c}
1\\
1\\
0\\
0\end{array}\right].\]
 The first two entries of the right-hand-side vector $h(\theta,\epsilon)$
in the linear system (\ref{abstract-linear-system}) are given explicitly
by \[
h_{1}(\theta,\epsilon)=\sum_{n\in\Z}e^{-i\theta|n|}f_{n},\quad h_{2}(\theta,\epsilon)=\sum_{n\in\Z}e^{-i\theta|n-2|}f_{n}.\]
 The constraint $\langle e_{1},h(-\frac{\pi}{2},0)\rangle_{\C^{4}}=0$
of Lemma \ref{lemma-fredholm} gives $h_{1}(-\frac{\pi}{2},0)=-h_{2}(-\frac{\pi}{2},0)$
and it is equivalent to the constraint $f_{1}=0$. If $f\in l^{1}(\Z)$
with $f_{1}\neq0$, then the solution of the linear system (\ref{system-a-b})
and hence the resolvent operator (\ref{resolvent-a-b}) has a singularity
at $\Omega=1+2\epsilon$ ($\theta=-\frac{\pi}{2}$) as $\epsilon\to0$.
This singularity indicates a resonance at the mid-point of the continuous
spectrum in the anti-continuum limit.

We would like to show that the resonance does not actually occur at
the continuous spectrum if $\epsilon>0$ and does not lead to (unstable)
eigenvalues off the continuous spectrum. To do so, we use the perturbation
theory up to the quadratic order in $\epsilon$.

Expanding solutions of the transcendental equation \[
2\epsilon(\cosh(\kappa)-1)=2+\epsilon\omega,\quad\omega=2-2\cos(\theta),\]
 we obtain \[
e^{-\kappa}=\frac{1}{2}\epsilon-\frac{2+\omega}{4}\epsilon^{2}+{\cal O}(\epsilon^{3})\quad\mbox{{\rm as}}\quad\epsilon\to0\]
 and \[
2\epsilon{\rm sinh}(\kappa)=2+(2+\omega)\epsilon-\epsilon^{2}+{\cal O}(\epsilon^{3})\quad\mbox{{\rm as}}\quad\epsilon\to0.\]
 Using expansion (\ref{m-equal-2}) for $m=2$, we obtain the extended
coefficient matrix $\tilde{A}(\theta,\epsilon)$ in the form \[
\tilde{A}(\theta,\epsilon):=\left[\begin{array}{cc}
2i\epsilon\sin(\theta)I-(1+p)\nu(\epsilon)M(\theta) & -p\nu(\epsilon)M(\theta)\\
-p\nu(\epsilon)N(\kappa) & 2\epsilon\sinh(\kappa)I-(1+p)\nu(\epsilon)N(\kappa)\end{array}\right],\]
 where $\nu(\epsilon)=1+2\epsilon-3\epsilon^{2}+{\cal O}(\epsilon^{3})$.
Using MATHEMATICA, we expand roots of $\det\tilde{A}(\theta,\epsilon)=0$
near $\theta=-\frac{\pi}{2}$ and $\epsilon=0$ to obtain \begin{equation}
\theta=-\frac{\pi}{2}+(p-1)\epsilon+2(1-p)\epsilon^{2}+i(p-1)^{2}\epsilon^{2}+{\cal O}(\epsilon^{3})\quad\mbox{{\rm as}}\quad\epsilon\to0.\label{interm_singularities}\end{equation}
 Since ${\rm Im}(\theta)>0$ for small $\epsilon>0$ and $z(\lambda_{+})=\theta$,
the solution of the linear system (\ref{abstract-linear-system})
is singular at the point $z(\lambda_{+})$, which does not belong
to the domain ${\rm Im}z(\lambda_{+})<0$ and hence violates the condition
(\ref{transcendental-equation-1}).

The singularity of the solution of the linear system (\ref{abstract-linear-system})
is still located near the continuous spectrum for small $\epsilon>0$
and, therefore, the resolvent operator $R(\Omega)$ becomes large
near the points $\Omega=\pm(1+2\epsilon)$ (although, it is always
a bounded operator from $l_{\sigma}^{2}(\Z)\times l_{\sigma}^{2}(\Z)$
to $l_{-\sigma}^{2}(\Z)\times l_{-\sigma}^{2}(\Z)$ for small $\epsilon>0$
and fixed $\sigma>\frac{1}{2}$). Since $\sin(\theta)$ is nonzero
for $\theta=-\frac{\pi}{2}$, the norm of $R(\Omega)$ is proportional
to the $2$-norm of inverse matrix $\tilde{A}^{-1}(\theta,\epsilon)$.

Figure \ref{fig:coef_mtrx_levels} illustrates the singularities of
the resolvent operator $R(\Omega)$ by plotting pseudospectra of the
coefficient matrix $A(\Omega,\epsilon)$ in the complex $\Omega$-plane
for $p=2$ and $\epsilon=0.05$. The subplots (a) and (b) for $m=1$
show that the matrix is singular at the edges of the continuous spectrum
$\Omega=\pm1$ and $\Omega=\pm(1+2\epsilon)$, and at four points
on the imaginary axis, the latter being attributed to the splitting
of zero eigenvalue in the anti-continuum limit. The subplots (c) and
(d) for $m=2$ and $m=3$ respectively show that in addition to singularities
at the edges of continuous spectrum there are also $m-1$ local maxima
at its intermediate points. This local maxima correspond to the minima
of ${\rm det}A(\Omega,\epsilon)$. We also notice the wedges on the
level sets as they cross the continuous spectrum occuring due to the
jump discontinuities in $z(\lambda_{+})$ and $A(\Omega,\epsilon)$
because the resolvent operator $R(\Omega)$ is discontinuous across
the continuous spectrum.

Figure \ref{fig:cont_spec} further illustrates what exactly happens
at the continuous spectrum. On the left, we plot $\left\Vert A(\Omega,\epsilon)^{-1}\right\Vert _{2}$
versus $\theta\in(-\pi,0)$ for the case $m=2$. On the right, we
show that the height of the local maxima near $\theta=-\pi/2$ is
proportional to $\epsilon^{-2}$ as prescribed by formula (\ref{interm_singularities}).

Figure \ref{fig:resolvent_levels} gives an illustration for pseudospectra
of the resolvent operator $R(\Omega)$. Recall that on the continuous
spectrum $\Omega\in[1,1+4\epsilon]$, $R(\Omega)$ is a bounded operator
from $l_{\sigma}^{2}(\Z)\times l_{\sigma}^{2}(\Z)$ to $l_{-\sigma}^{2}(\Z)\times l_{-\sigma}^{2}(\Z)$)
for fixed $\sigma>\frac{1}{2}$. To incorporate the weighted $l^{2}$
spaces, we consider the renormalized resolvent operator \[
\tilde{R}_{L}(\Omega)=(\tilde{L}-\Omega\tilde{I}_{2})^{-1}:l^{2}(\Z)\times l^{2}(\Z)\to l^{2}(\Z)\times l^{2}(\Z),\]
 where $\tilde{L}$ is derived from $L$ by replacing operators $I$,
$\Delta$ and $V$ with $\tilde{I}$, $\tilde{\Delta}$ and $\tilde{V}$,
and $\tilde{I}_{2}=\mathrm{diag}\{\tilde{I},\tilde{I}\}$. Here \[
\tilde{I}_{n,m}=\kappa_{n}^{2}\delta_{n,m},\quad\tilde{V}_{n,m}=\tilde{I}_{n,m}\sum_{j\in U_{+}\cup U_{-}}\delta_{n,j},\]
 \[
\tilde{\Delta}_{n,n}=-2\kappa_{n}^{2},\quad\tilde{\Delta}_{n,n+1}=\tilde{\Delta}_{n+1,n}=\kappa_{n}\kappa_{n+1},\]
 and $\kappa_{n}=(1+n^{2})^{\sigma/2}$. The lattice problem is considered
for $2K+1$ grid points and the corresponding matrix representation
of operators $\tilde{L}$ and $\tilde{I}_{2}$ is constructed subject
to the Dirichlet boundary conditions.

The level sets for the $(2K+1)\times(2K+1)$ matrix approximation
of the resolvent $\tilde{R}(\Omega)$ are plotted on Figure \ref{fig:resolvent_levels}.
The subplots of Figure \ref{fig:resolvent_levels} correspond to the
subplots of Figure \ref{fig:coef_mtrx_levels}. We observe that the
norm of $\tilde{R}(\Omega)$ has the same global behaviour as for
the norm of $A(\Omega,\epsilon)^{-1}$. However, the resolvent operator
$\tilde{R}(\Omega)$ has no singularities at the edges $\Omega=\pm1$
and $\Omega=\pm(1+4\epsilon)$ because these singularities are canceled
according to Lemma \ref{lemma-condition-bounded} (which remains true
for any $m\geq1$, see Remark \ref{remark-lemma-4}).

Although no arguments exist to exclude resonances at the mid-point
of the continuous spectrum for the linearized dNLS equation (\ref{spectral-stability}),
the case study of a two-site discrete soliton suggests that the resonances
do not happen at the continuous spectrum for small but finite values
of $\epsilon>0$. Moreover, the resonances do not bifurcate to the
isolated eigenvalues off the continuous spectrum because isolated
eigenvalues near the continuous spectrum would violate the count of
unstable eigenvalues (\ref{count-eigenvalues-explicit}). Therefore,
the only scenario for these resonances is to move to the resonant
poles on the wrong sheets ${\rm Im}(z(\lambda_{\pm}))>0$ of the definition
of $z(\lambda_{\pm})$.

\begin{figure}
\begin{centering}
\begin{tabular}{cc}
\includegraphics[width=0.47\textwidth]{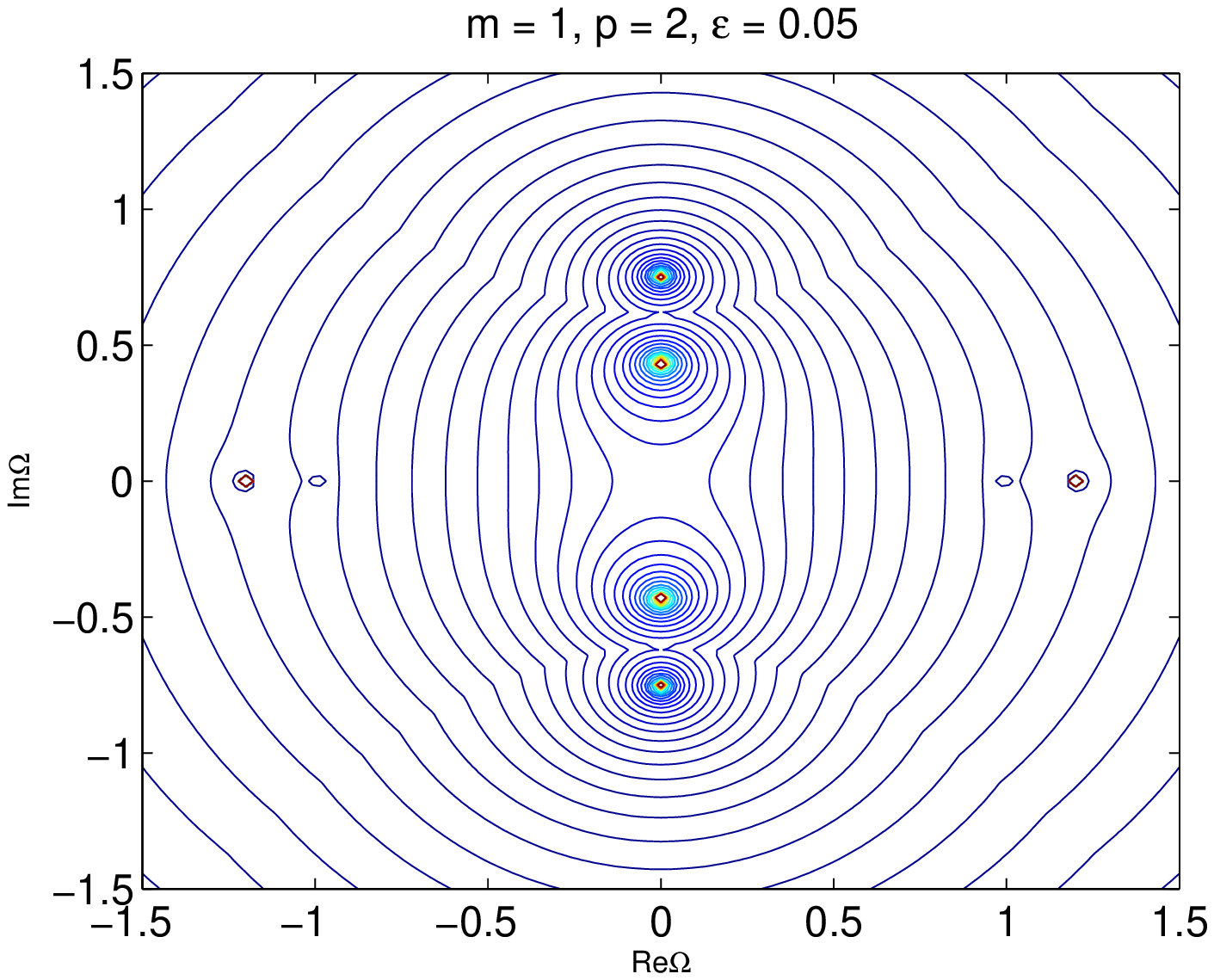}  & \includegraphics[width=0.47\textwidth]{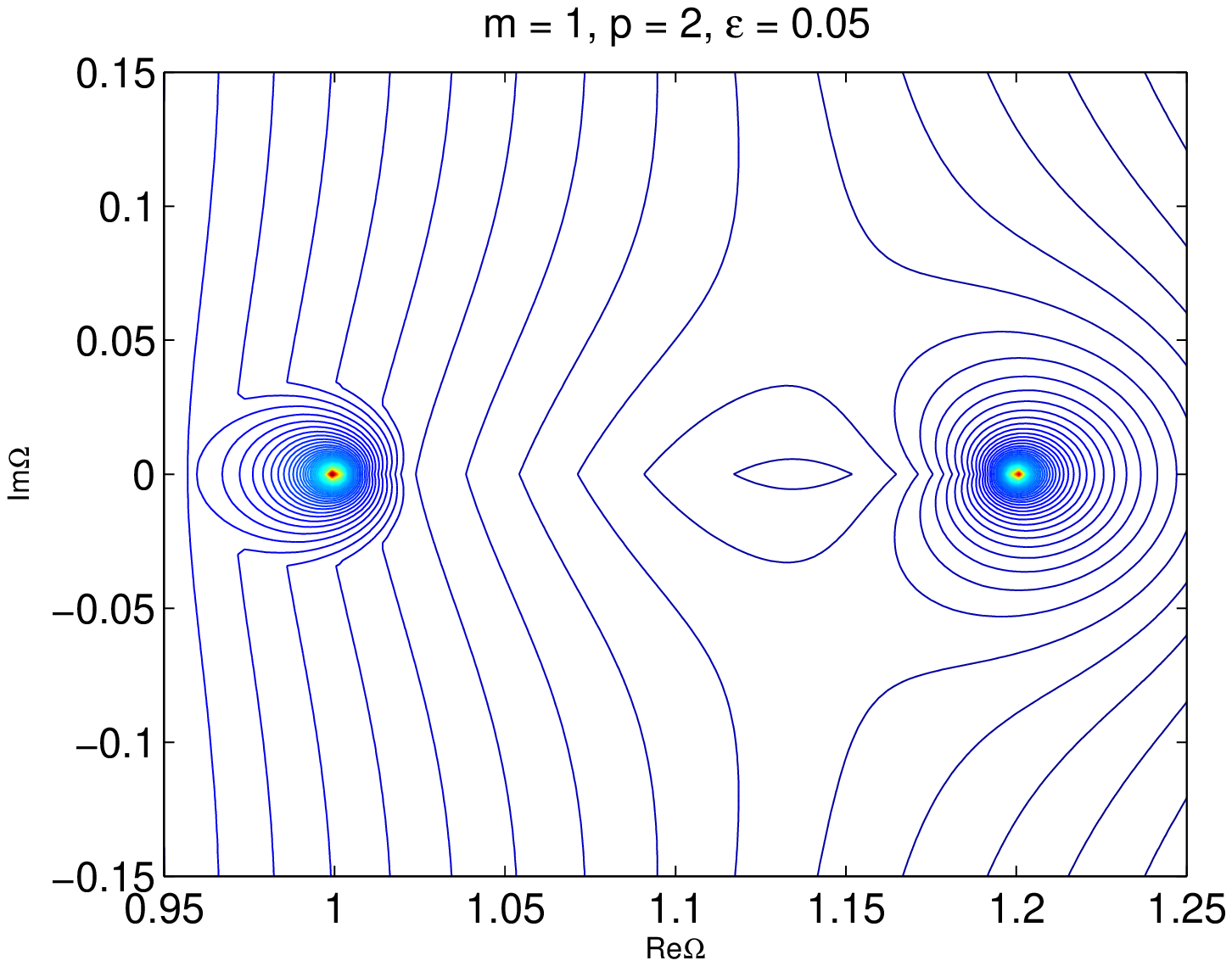}\tabularnewline
(a)  & (b)\tabularnewline
\includegraphics[width=0.47\textwidth]{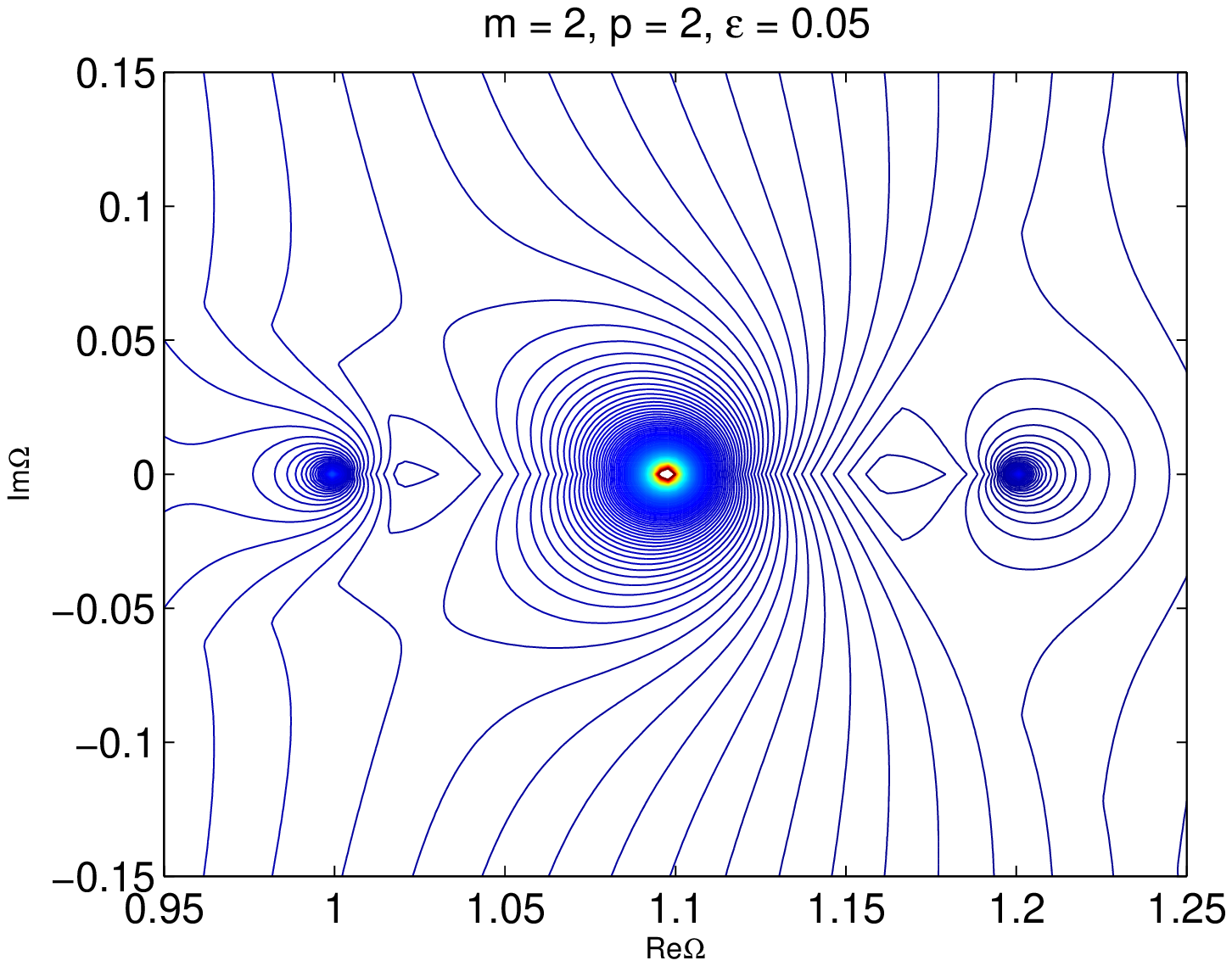}  & \includegraphics[width=0.47\textwidth]{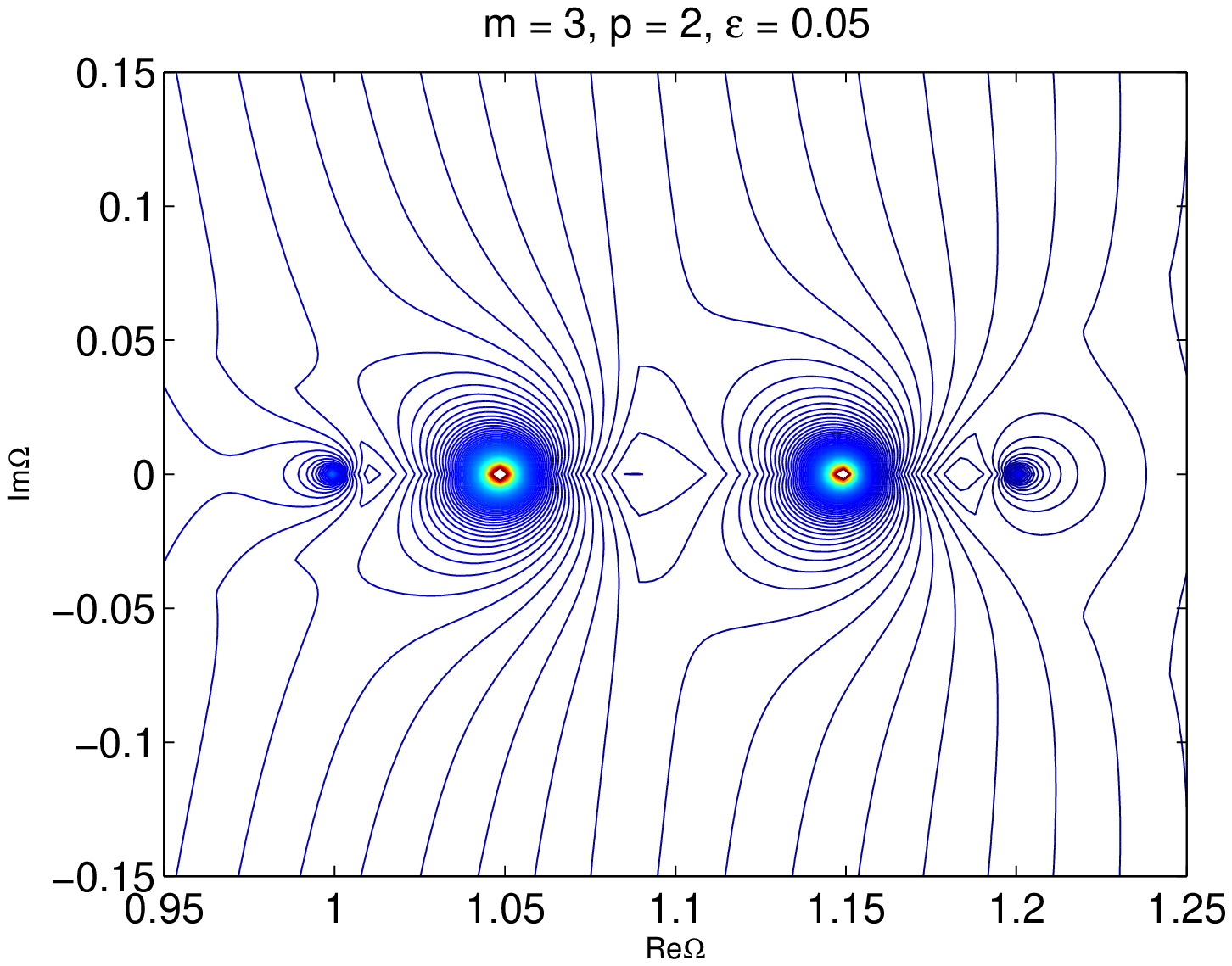}\tabularnewline
(c)  & (d)\tabularnewline
\end{tabular}
\par\end{centering}

\caption{Level sets for $\left\Vert A(\Omega,\epsilon)^{-1}\right\Vert _{2}$
in the $\Omega$-plane. The levels are equidistant on a logarithmic
scale. \label{fig:coef_mtrx_levels}}

\end{figure}

\begin{figure}
\begin{centering}
\begin{tabular}{cc}
\includegraphics[width=0.47\textwidth]{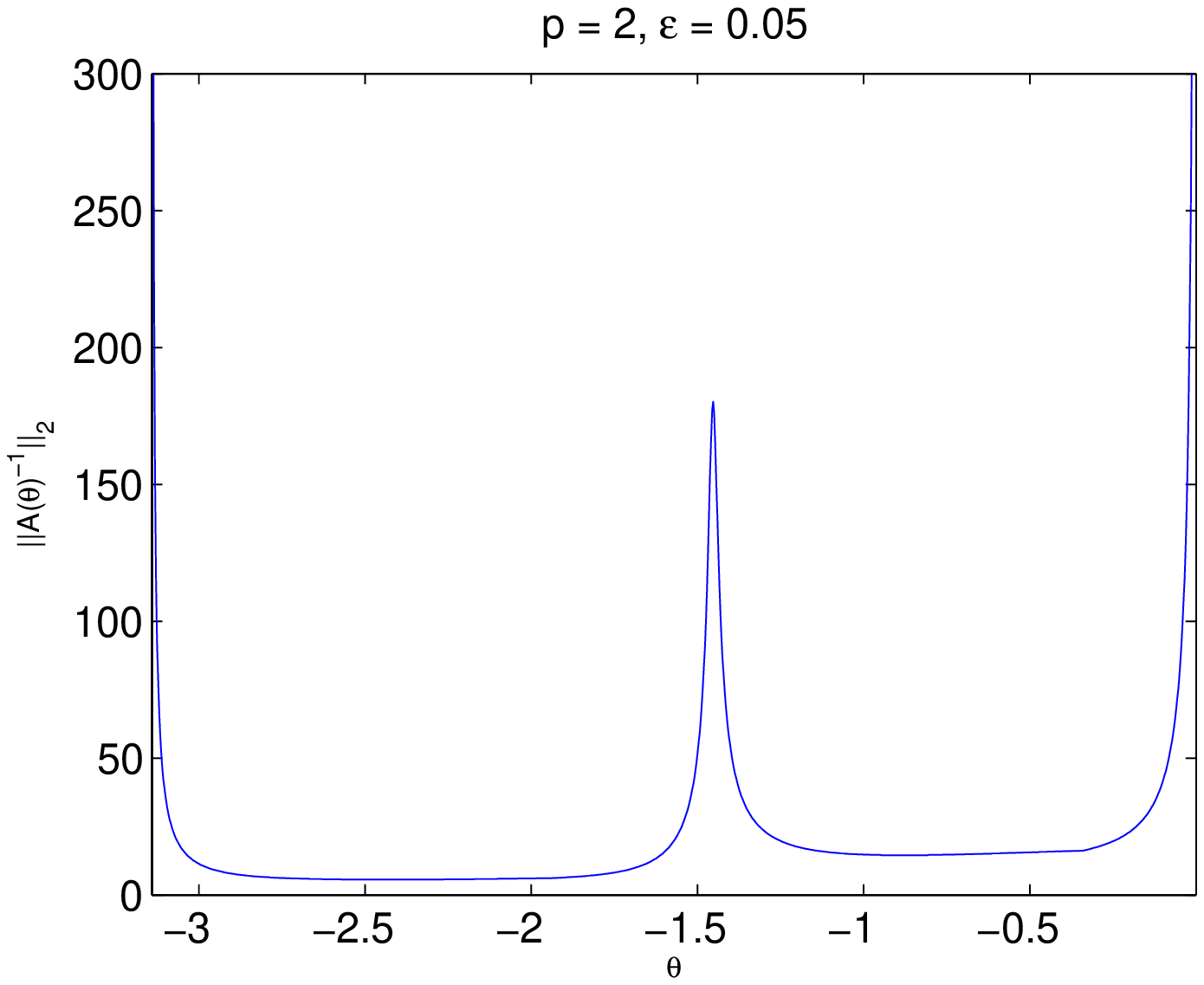}  & \includegraphics[width=0.47\textwidth]{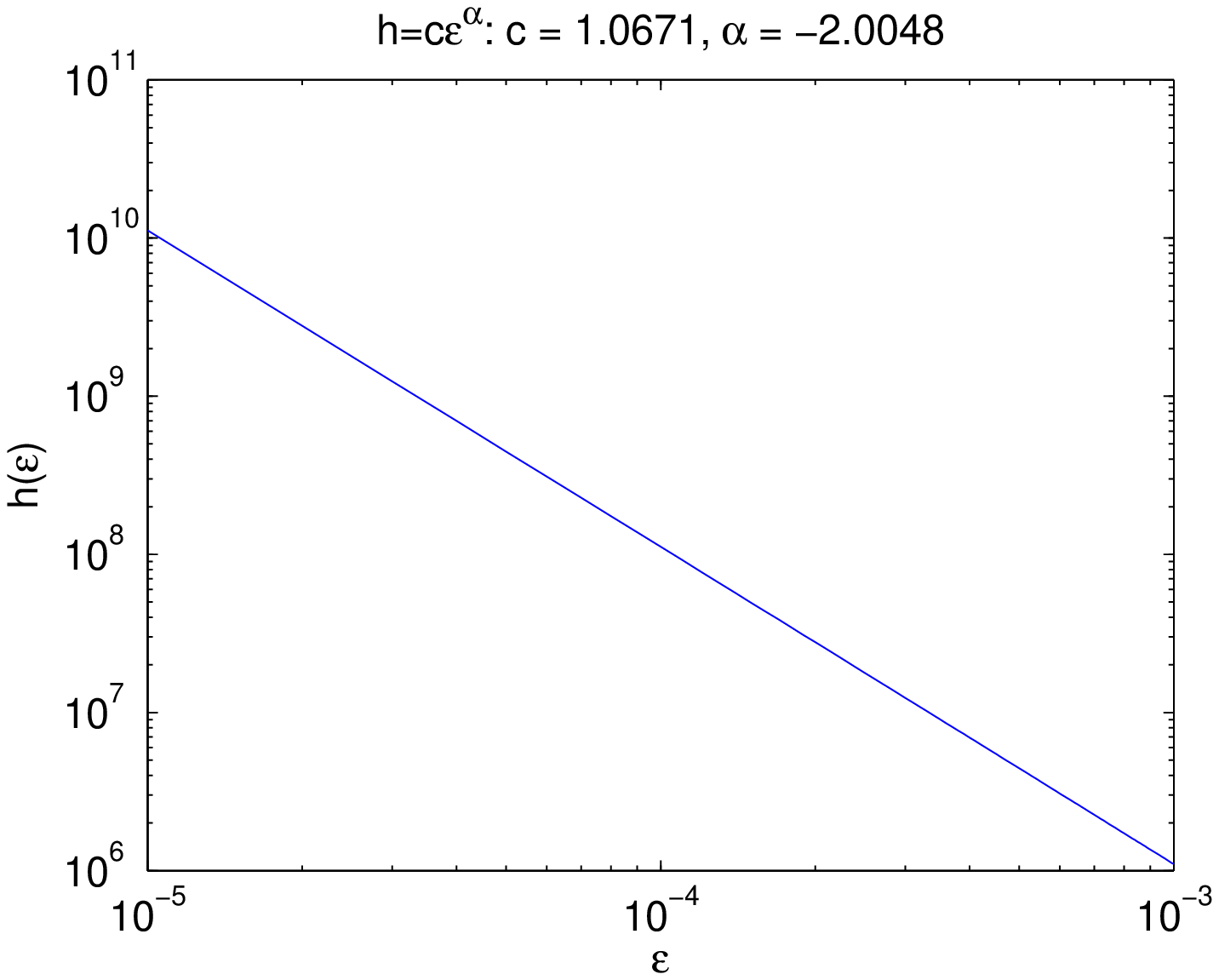}\tabularnewline
\end{tabular}
\par\end{centering}

\caption{Left: Norm $\left\Vert A(\Omega,\epsilon)^{-1}\right\Vert _{2}$ versus
$\theta\in(-\pi,0)$ for $m=2$. Right: The value of local maxima
of $\left\Vert A(\Omega,\epsilon)^{-1}\right\Vert _{2}$ in the neighborhood
of $\theta=-\pi/2$ as a function of $\epsilon$. \label{fig:cont_spec}}

\end{figure}

\begin{figure}
\begin{centering}
\begin{tabular}{cc}
\includegraphics[width=0.47\textwidth]{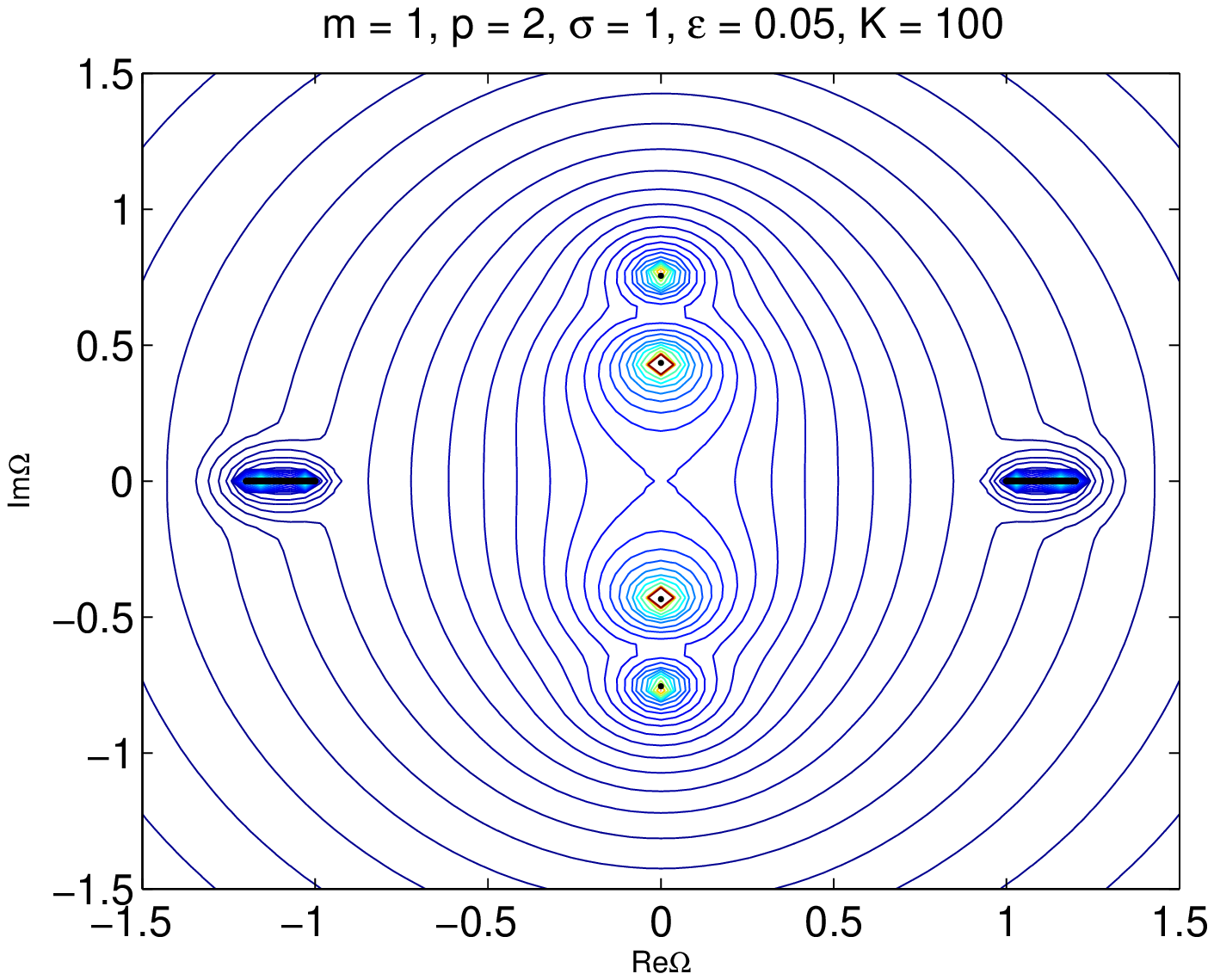}  & \includegraphics[width=0.47\textwidth]{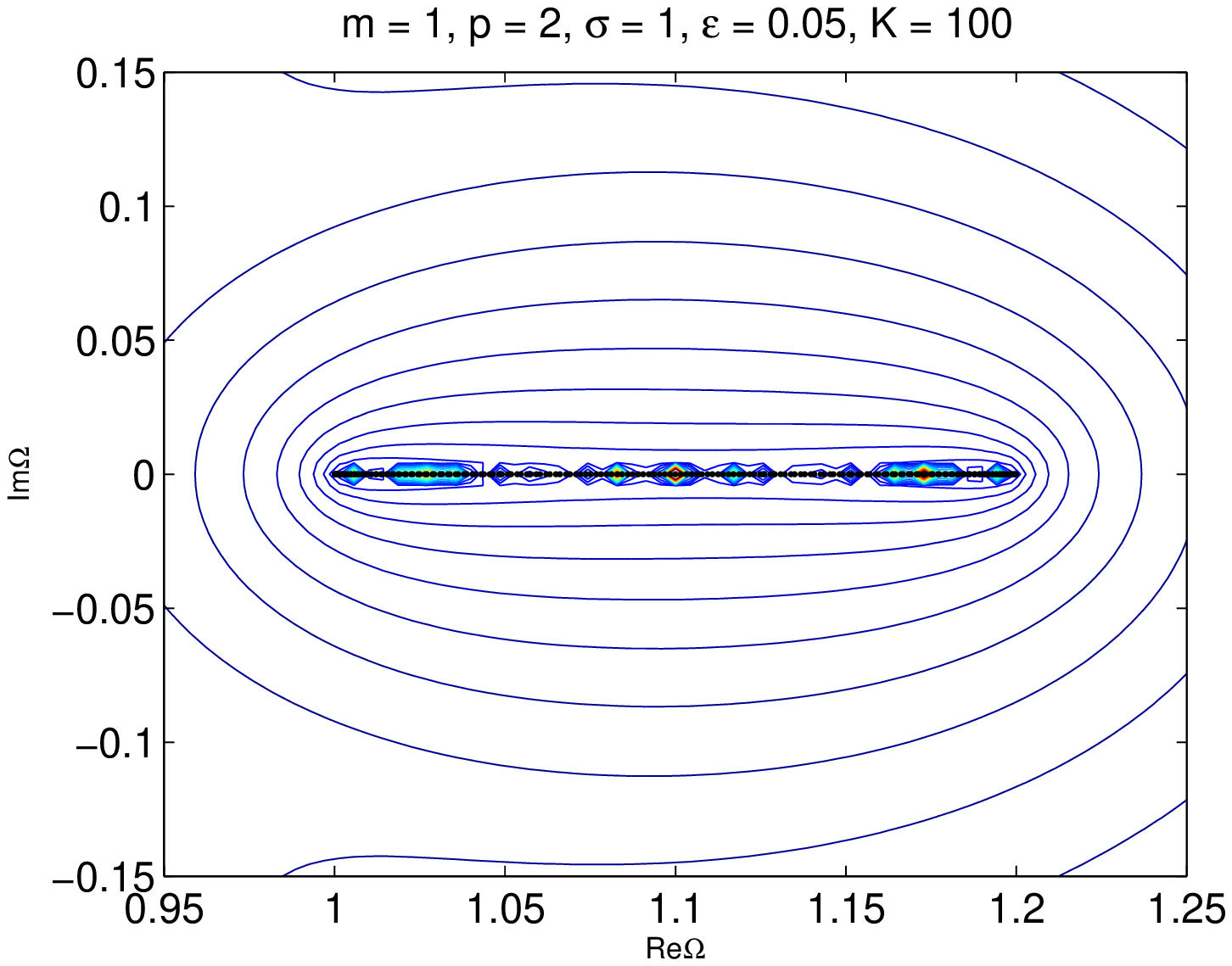}\tabularnewline
(a)  & (b)\tabularnewline
\includegraphics[width=0.47\textwidth]{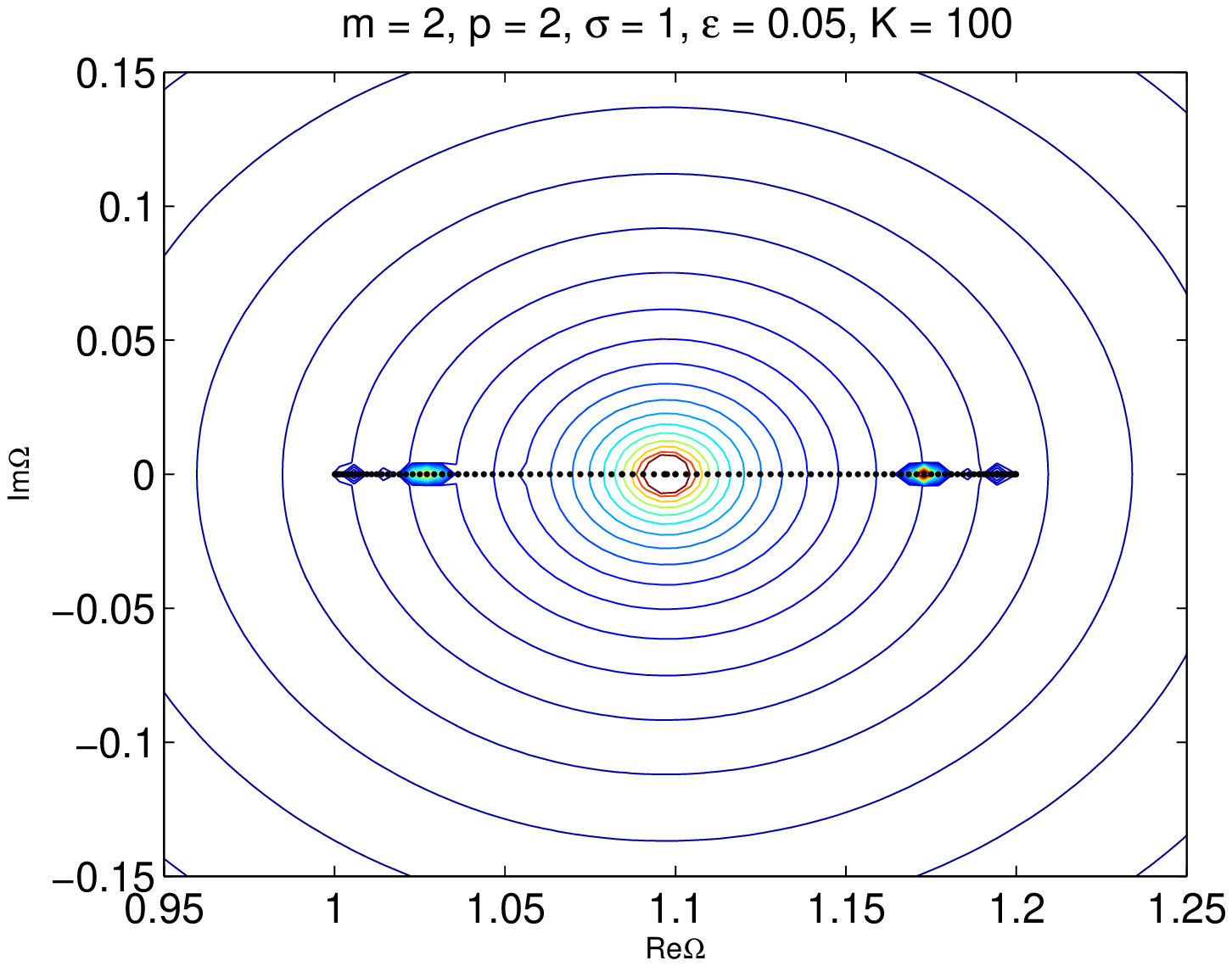}  & \includegraphics[width=0.47\textwidth]{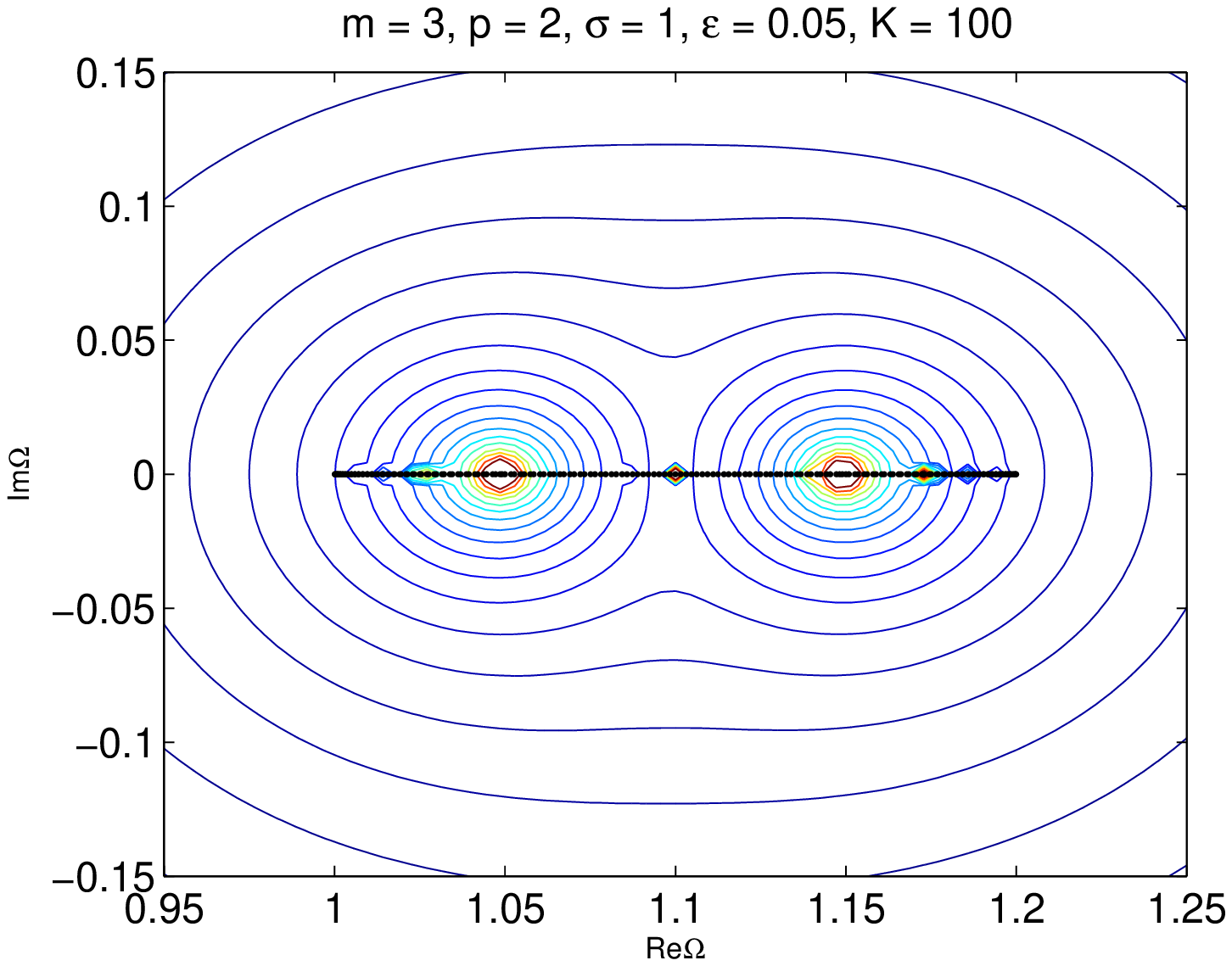}\tabularnewline
(c)  & (d)\tabularnewline
\end{tabular}
\par\end{centering}

\caption{The level sets of $\left\Vert (\tilde{L}-\Omega\tilde{I}_{2})^{-1}\right\Vert _{2}$
in the $\Omega$-plane. The black dots represent eigenvalues of the
matrix representation of operator $\tilde{L}$. The levels are equidistant
on a logarithmic scale. \label{fig:resolvent_levels}}

\end{figure}

\appendix

\section{Perturbative arguments for the cubic dNLS equation}

We recall the coefficient matrices $A_{\pm}(\epsilon)$ from the proof
of Lemma \ref{lemma-A}. In the case $p=1$ (the cubic dNLS equation),
these matrices are rewritten in the form \[
A_{\pm}(\epsilon)=\left[\begin{array}{cc}
-2M_{\pm} & -M_{\pm}\\
-N(\kappa_{\pm}) & 2\epsilon\sinh(\kappa_{\pm})I-2N(\kappa_{\pm})\end{array}\right],\]
 where $\kappa_{\pm}>0$ are uniquely defined by \[
2\epsilon(\cosh(\kappa_{+})-1)=2,\quad2\epsilon(\cosh(\kappa_{-})-1)=2+4\epsilon.\]

We recall that ${\rm Null}(A_{\pm}(\epsilon))$ and ${\rm Null}(M_{\pm})$
are $(N-1)$-dimensional for any $\epsilon\in[0,\epsilon_{0})$. It
is clear from the explicit form of $A_{\pm}^{*}(\epsilon)$ that \[
u\in{\rm Null}(A_{\pm}^{*}(\epsilon))\quad\Leftrightarrow\quad u=\left[\begin{array}{c}
w\\
0\end{array}\right],\quad w\in{\rm Null}(M_{\pm}).\]

At $\epsilon=0$, we also recall that ${\rm Null}(A_{\pm}(0))^{2}$
is $(2N-2)$-dimensional because of $(N-1)$ eigenvectors and $(N-1)$
generalized eigenvectors, \[
A_{\pm}(0)\left[\begin{array}{c}
0\\
w\end{array}\right]=\left[\begin{array}{c}
0\\
0\end{array}\right],\quad A_{\pm}(0)\left[\begin{array}{c}
-w\\
0\end{array}\right]=\left[\begin{array}{c}
0\\
w\end{array}\right],\quad w\in{\rm Null}(M_{\pm}).\]

We would like to show that ${\rm Null}(A_{\pm}(\epsilon))^{2}={\rm Null}(A_{\pm}(\epsilon))$
is $(N-1)$-dimensional for any $\epsilon\in(0,\epsilon_{0})$. In
other words, we would like to show that no solution $\tilde{u}\in\C^{2N}$
of the inhomogeneous equation $A_{\pm}(\epsilon)\tilde{u}=u\in{\rm Null}(A_{\pm}(\epsilon))$
exists for $\epsilon\in(0,\epsilon_{0})$. This task is achieved by
the perturbation theory. We will only consider the case $A_{+}(\epsilon)$,
which corresponds to $\theta=0$. The case $A_{-}(\epsilon)$ which
corresponds to $\theta=-\pi$ can be considered similarly.

We shall only consider the case of the simply-connected set $U_{+}\cup U_{-}$
with $m_{1}=m_{2}=...=m_{N-1}=1$. The general case holds without
any changes.

Thanks to the asymptotic expansions \[
e^{-\kappa_{+}}=\frac{\epsilon}{2}+{\cal O}(\epsilon^{2}),\quad2\epsilon\sinh(\kappa_{+})=2+2\epsilon+{\cal O}(\epsilon^{2}),\quad\mbox{{\rm as}}\quad\epsilon\to0,\]
 we obtain the asymptotic expansion \[
A_{+}(\epsilon)=\left[\begin{array}{cc}
-2M_{+} & -M_{+}\\
-I & O\end{array}\right]+\epsilon\left[\begin{array}{cc}
O & O\\
-\frac{1}{2}J & 2I-J\end{array}\right]+{\cal O}(\epsilon^{2}),\]
 where $I$ and $O$ are identity and zero matrices in $\R^{N}$ and
$J$ is the three-diagonal matrix in $\R^{N}$ \begin{equation}
J=\left[\begin{array}{cccccc}
0 & 1 & 0 & \cdots & 0 & 0\\
1 & 0 & 1 & \cdots & 0 & 0\\
0 & 1 & 0 & \cdots & 0 & 0\\
\vdots & \vdots & \vdots & \vdots & \vdots & \vdots\\
0 & 0 & 0 & \cdots & 0 & 1\\
0 & 0 & 0 & \cdots & 1 & 0\end{array}\right].\label{matrix-J}\end{equation}
 Note that $(2I-J)$ is a strictly positive matrix because it appears
in the finite-difference approximation of the differential operator
$-\partial_{x}^{2}$ subject to the Dirichlet boundary conditions.

Perturbative computations show that if $u\in{\rm Null}(A_{+}(\epsilon))$,
then $u$ is represented asymptotically as \[
u=\left[\begin{array}{c}
\epsilon(2I-J)v\\
v\end{array}\right]+{\cal O}(\epsilon^{2}),\]
 where $v+2\epsilon(2I-J)v+{\cal O}(\epsilon^{2})=w\in{\rm Null}(M_{+})$.

Now, there exists a solution $\tilde{u}\in\C^{2N}$ of the inhomogeneous
equation $A_{+}(\epsilon)\tilde{u}=u\in{\rm Null}(A_{+}(\epsilon))$
if and only if $u\perp{\rm Null}(A_{+}^{*}(\epsilon))$. For small
$\epsilon\in(0,\epsilon_{0})$, this condition implies that \[
\epsilon(2I-J)v+{\cal O}(\epsilon^{2})=\epsilon(2I-J)w+{\cal O}(\epsilon^{2})\perp w\in{\rm Null}(M_{+}),\]
 which is not possible since $(2I-J)$ is a strictly positive matrix.


\begin{thebibliography}{19}
\bibitem{Alfimov} G.L. Alfimov, V.A. Brazhnyi, and V.V. Konotop,
{}``On classification of intrinsic localized modes for the discrete
nonlinear Schrödinger equation\textquotedbl{}, Physica D \textbf{194}
(2004) 127--150.

\bibitem{Aubry} S. Aubry and G. Abramovici, {}``Chaotic trajectories
in the standard map. The concept of anti-integrability\textquotedbl{},
Physica D \textbf{43} (1990), 199--219.

\bibitem{Aubry1} S. Aubry, {}``Anti-integrability in dynamical and
variational problems\textquotedbl{}, Physica D \textbf{86} (1995),
284--296.

\bibitem{Aubry2} S. Aubry, {}``Breathers in nonlinear lattices:
Existence, linear stability and quantization\textquotedbl{}, Physica
D \textbf{103} (1997), 201--250.

\bibitem{Bambusi} D. Bambusi, {}``Exponential stability of breathers
in Hamiltonian networks of weakly coupled oscillators\textquotedbl{},
Nonlinearity \textbf{9} (1996), 433--457.

\bibitem{ChP} M. Chugunova and D. Pelinovsky, {}``Count of unstable
eigenvalues in the generalized eigenvalue problem'', J. Math. Phys.
\textbf{51} (2010), 052901 (19 pages).

\bibitem{Cuccagna} S. Cuccagna, {}``On instability of excited states
of the nonlinear Schrödinger equation\textquotedbl{}, Physica D \textbf{238}
(2009), 38--54.

\bibitem{Cuccagna-breather} S. Cuccagna, {}``Orbitally but not asymptotically
stable ground states for the discrete NLS\textquotedbl{}, Discrete
Contin. Dyn. Syst. \textbf{26} (2010), 105--134.

\bibitem{CT} S. Cuccagna and M. Tarulli, {}``On asymptotic stability
of standing waves of discrete Schrödinger equation in $\mathbb{Z}$\textquotedbl{},
SIAM J. Math. Anal. \textbf{41} (2009), 861--885.

\bibitem{Eilbeck} J.C. Eilbeck, P.S. Lomdahl, and A.C. Scott, {}``Soliton
structure in crystalline acetanilide\textquotedbl{}, Phys. Rev. B
\textbf{30} (1984), 4703--4712.

\bibitem{JA00} M. Johansson and S. Aubry, {}``Growth and decay of
discrete nonlinear Schrödinger breathers interacting with internal
modes or standing-wave phonons\textquotedbl{}, Phys. Rev. E \textbf{61}
(2000), 5864--5879.

\bibitem{Kevrekidis} P.G. Kevrekidis, {\em The Discrete Nonlinear
Schrödinger Equation: Mathematical Analysis, Numerical Computations
and Physical Perspectives}, Springer Tracts in Modern Physics \textbf{232}
(Springer, New York, 2009).

\bibitem{KPS} P.G. Kevrekidis, D.E. Pelinovsky, and A. Stefanov,
{}``Asymptotic stability of small bound states in the discrete nonlinear
Schrödinger equation in one dimension'', SIAM J. Math. Anal. \textbf{41}
(2009), 2010--2030.

\bibitem{KKK} A. Komech, E. Kopylova, and M. Kunze, {}``Dispersive
estimates for 1D discrete Schrödinger and Klein-Gordon equations'',
Applicable Analysis \textbf{85} (2006), 1487--1508.

\bibitem{MA94} R.S. MacKay and S. Aubry, {}``Proof of existence
of breathers for time-reversible or Hamiltonian networks of weakly
coupled oscillators\textquotedbl{}, Nonlinearity \textbf{7} (1994)
1623-1643.

\bibitem{PP08} P. Panayotaros and D. Pelinovsky, {}``Periodic oscillations
of discrete NLS solitons in the presence of diffraction management'',
Nonlinearity \textbf{21} (2008), 1265--1279.

\bibitem{PKF1} D.E. Pelinovsky, P.G. Kevrekidis, and D.J. Frantzeskakis,
{}``Stability of discrete solitons in nonlinear Schr{ö}dinger lattices'',
Physica D \textbf{212} (2005), 1--19.

\bibitem{PK08} D.E. Pelinovsky and P.G. Kevrekidis, {}``Stability
of discrete dark solitons in nonlinear Schrodinger lattices'', J.
Phys. A: Math. Gen. \textbf{41} (2008), 185206 (10pp).

\bibitem{PS08} D.E. Pelinovsky and A. Stefanov, {}``On the spectral
theory and dispersive estimates for a discrete Schrödinger equation
in one dimension'', J. Math. Phys. \textbf{49} (2008), 113501 (17pp). 
\end{thebibliography}
\end{document}